\newcommand{\Fp}{\mathbb{\overline{F}}_p}
\newcommand{\QQ}{\mathbb Q}
\newcommand{\mbN}{\mathbb{N}}
\newcommand{\mbQ}{\mathbb{Q}}
\newcommand{\mbZ}{\mathbb{Z}}
\newcommand{\mbP}{\mathbb{P}}
\newcommand{\mcD}{\mathcal{D}}
\newcommand{\mcE}{\mathcal{E}}
\newcommand{\mcI}{\mathcal{I}}
\newcommand{\mcL}{\mathcal{L}}
\newcommand{\mcO}{\mathcal{O}}
\DeclareMathOperator{\Chow}{Chow}
\DeclareMathOperator{\Supp}{Supp}
\DeclareMathOperator{\Spec}{Spec}
\DeclareMathOperator{\Sing}{Sing}
\DeclareMathOperator{\codim}{codim}
\DeclareMathOperator{\Pic}{Pic}
\DeclareMathOperator{\Exc}{Exc}
\newcommand*{\defeq}{\mathrel{\mathop:}=}
\theoremstyle{plain}
\newtheorem{thmi}{Theorem}     
\newtheorem{cori}{Corollary}     
\newtheorem{theorem}{Theorem}[section]
\newtheorem{proposition}[theorem]{Proposition}
\newtheorem{lemma}[theorem]{Lemma}
\newtheorem*{lemma*}{Lemma}
\newtheorem{corollary}[theorem]{Corollary}
\theoremstyle{definition}
\newtheorem{definition}[theorem]{Definition}
\newtheorem{example}[theorem]{Example}
\theoremstyle{remark}
\newtheorem{remark}[theorem]{Remark}
\title[On the canonical bundle formula]
{On the canonical bundle formula and log abundance in positive characteristic} 
\author{Jakub Witaszek} 
\subjclass[2010]{14E30, 14E05}
\keywords{canonical bundle formula, abundance, base point free theorem, positive characteristic}
\address{Department of Mathematics, Imperial College, London, 180 Queen's Gate, 
London SW7 2AZ, UK} 
\email{j.witaszek14@imperial.ac.uk}
\begin{document}
\maketitle
\begin{abstract}
We show that a weak version of the canonical bundle formula holds for fibrations of relative dimension one. We provide various applications thereof, for instance, using the recent result of Xu and Zhang, we prove the log non-vanishing conjecture for three-dimensional klt pairs over any algebraically closed field $k$ of characteristic $p>5$. We also show the log abundance conjecture for threefolds over $k$ when the nef dimension is not maximal, and the base point free theorem for threefolds over $\Fp$ when $p>2$.
\end{abstract}

\section{Introduction}
Since the beginning of algebraic geometry, mathematicians have looked for a way to classify all smooth projective varieties. A conjectural framework for such a classification called the Minimal Model Program (MMP for short) was established around forty years ago for the category of varieties with Kawamata log terminal (klt) singularities, and a big part of the program was shown to hold in characteristic zero at the beginning of this century (see \cite{bchm06}). However, in characteristic $p>0$ the most basic results of the Minimal Model Program are still widely open, notwithstanding the recent progress in the case of surfaces (see \cite{tanaka12}) and threefolds when $p>5$ (see \cite{hx13}, and also \cite{ctx13}, \cite{birkar13}, and \cite{bw14}).

One of very important tools used in the characteristic zero birational geometry is the \emph{canonical bundle formula} describing the behaviour of canonical divisors under log Calabi-Yau fibrations. Not only does this formula allow for a calculation of the canonical ring of a klt variety by means of the MMP, but also plays a vital role in proofs of many fundamental results, like for instance the log abundance conjecture for threefolds over $\mathbb{C}$. Unfortunately, in general the canonical bundle formula is false in positive characteristic, even in a case of a smooth fibration from a surface to a curve (cf.\ Example \ref{ex:cbf_false}).

The goal of this article is to embody an idea that the canonical bundle formula in characteristic $p>0$ can be partially recovered, if we replace the base of a log Calabi-Yau fibration by a purely inseparable cover, and, even more importantly, to provide various applications thereof. 
\begin{thmi}[see Theorem \ref{theorem:canonical_bundle_formula_main}] \label{theorem:canonical_bundle_formula_main_intro} Let $(X,\Delta)$ be an $n$-dimensional projective log canonical pair defined over an algebraically closed field $k$ of characteristic $p>0$, let $\phi \colon X \to Z$ be a contraction with the generic fibre of dimension one, and let $L$ be a $\mbQ$-Cartier $\mbQ$-divisor on $Z$ satisfying $K_X+\Delta \sim_{\mbQ} \phi^* L$. Assume that $p>3$, or that $\Delta$ is big and $p>2$. 

Then, there exists a purely-inseparable morphism $f \colon T \to Z$ such that
\[
f^*L \sim_{\mbQ} tf^*K_Z + (1-t)(K_{T}+\Delta_T),
\]
for some rational number $0 \leq t \leq 1$ and an effective $\mbQ$-divisor $\Delta_T$ on $T$.
\end{thmi}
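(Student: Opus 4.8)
The plan is to reduce to a study of the generic fibre of $\phi$ and then to build a (weak) canonical bundle formula on a suitable purely inseparable cover of $Z$.

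\textbf{Reduction to the generic fibre.} Write $K=k(Z)$ and let $(X_\eta,\Delta_\eta)$ be the generic fibre: a log canonical pair whose underlying scheme is a regular, projective, geometrically integral curve over $K$, with $K_{X_\eta}+\Delta_\eta\sim_{\mbQ}0$. Comparing degrees, $2p_a(X_\eta)-2=-\deg\Delta_\eta\le 0$, so $p_a(X_\eta)\in\{0,1\}$; moreover, restricting $\Delta\sim_{\mbQ}A+E$ with $A$ ample and $E\ge 0$ shows that if $\Delta$ is big then $\Delta_\eta$ is big on $X_\eta$, hence $\deg\Delta_\eta>0$ and $p_a(X_\eta)=0$. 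On the other hand, when $p>3$ the theory of genus change in inseparable extensions (Tate) excludes the quasi-elliptic case, so a regular curve of arithmetic genus one over $K$ is automatically smooth. Thus, under either set of hypotheses, after a finite purely inseparable base change $Z'\to Z$ and normalization of $X\times_Z Z'$ — replacing $\Delta$ by the effective divisor $\Delta'$ for which $(X',\Delta')$ is log canonical and $K_{X'}+\Delta'\sim_{\mbQ}(\phi')^*L'$, $L'$ the pullback of $L$, using the standard behaviour of log canonical pairs under purely inseparable base change and the conductor/adjunction for normalizations — the generic fibre is either a smooth genus one curve with $\Delta'_\eta=0$, or $\mbP^1_{k(Z')}$ with $\Delta'_\eta$ supported on finitely many points of total degree two. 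Replacing $Z$ by $Z'$, we may assume we are in one of these two standard situations.

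\textbf{A canonical bundle formula on $Z$.} Over each codimension-one point of $Z$ the fibre is a curve, so the usual log-canonical-threshold recipe produces a discriminant divisor $B_Z\ge 0$ (effectivity uses that $(X,\Delta)$ is log canonical), and we set $M_Z\defeq L-K_Z-B_Z$, so $L\sim_{\mbQ}K_Z+B_Z+M_Z$. In the standard situations above there is a classifying rational map $h\colon Z\dashrightarrow\overline{M}$, where $\overline{M}\cong\mbP^1$ is the coarse moduli space of the relevant pointed genus-zero or genus-one curves ($j$-invariant, respectively cross-ratio), and, possibly after replacing $Z$ by a higher birational model on which $h$ becomes a morphism, $M_Z$ is $\mbQ$-linearly equivalent to the pullback under $h$ of a fixed divisor on $\overline{M}$; if $h$ is constant the fibration is isotrivial, the automorphism (or torsor) twisting is by an étale group scheme — here one uses $p>3$, resp.\ $p>2$ — and $M_Z$ is torsion. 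This is precisely the point at which positive characteristic bites: the classical identities tying $M_Z$, $B_Z$ and the ramification of $h$ together (and the semipositivity of $M_Z$) are spoiled by wild ramification and inseparability of $h$, so that the canonical bundle formula genuinely fails on $Z$ itself (Example \ref{ex:cbf_false}).

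\textbf{Inseparable base change and assembly.} Choose an iterate of the Frobenius $\overline{M}'\to\overline{M}$ of $\mbP^1$ so large that, after normalizing the fibre product, the induced map $h'\colon T\to\overline{M}'$ becomes separable, where $f\colon T\to Z$ is the resulting purely inseparable morphism; then the ramification of $f$, encoded by the effective divisor $R_f\defeq K_T-f^*K_Z$, is concentrated over the discriminant locus, and $f^*M_Z$ is $\mbQ$-linearly equivalent to an effective divisor (pull back a point of $\overline{M}'$ under $h'$, or use torsionness in the isotrivial case). Pulling the formula $L\sim_{\mbQ}K_Z+B_Z+M_Z$ back to $T$, applying the log Hurwitz (different) formula to the now-separable $h'$, and matching the boundary preimage with $f^*B_Z$, one gets, for a suitable rational $t$ chosen close enough to $1$ that $\tfrac{1}{1-t}\big(f^*B_Z+f^*M_Z\big)\ge R_f$ (possible since $\Supp R_f$ lies in the support of $f^*B_Z+f^*M_Z$),
\[
f^*L-tf^*K_Z-(1-t)(K_T+\Delta_T)\sim_{\mbQ}0,\qquad
\Delta_T\defeq\tfrac{1}{1-t}\big(f^*B_Z+f^*M_Z\big)-R_f\ \ge\ 0 .
\]
The parameter $t$ is exactly what absorbs the (effective) correction $R_f$ coming from the inseparability of $f$.

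\textbf{The main obstacle} is the control of the moduli part $M_Z$ in characteristic $p$: over $\mbC$ one invokes Hodge-theoretic semipositivity and the explicit structure of $\overline{\mathcal{M}}_{1,1}$ and $\overline{\mathcal{M}}_{0,4}$, none of which is directly available here, and the canonical bundle formula genuinely fails before base change. The key insight to be implemented is that passing to the purely inseparable cover $T$ ``separates'' the classifying map $h$ and thereby restores enough of the formula to produce the stated convex combination — at the price of obtaining only this weak form. The low-characteristic exclusions enter precisely here: $p>3$ to rule out quasi-elliptic generic fibres and the attendant breakdown of the genus one moduli description, and $p>2$ together with the bigness of $\Delta$ (which forces a genus zero fibre) for the genus zero analysis.
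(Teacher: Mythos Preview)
Your approach is fundamentally different from the paper's and contains a genuine gap in the first reduction step.

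The paper does \emph{not} attempt to set up a discriminant/moduli decomposition on $Z$ and then repair the moduli part by Frobenius base change on a moduli space. Instead, it observes that if the geometric generic fibre $(X_{\bar\mu},\Delta|_{X_{\bar\mu}})$ is already log canonical, the classical canonical bundle formula applies with $f=\mathrm{id}$ (Proposition~\ref{proposition:canonical_bundle_formula}). If it is not, then there must exist an irreducible horizontal component $S$ of $\Delta$ with coefficient $a$ such that $\phi|_S$ is generically purely inseparable of degree $p^k$ and $1/p^k < a \le 2/p^k$. One then takes $T=\tilde S$ (the normalisation of $S$) and $f=\phi|_{\tilde S}$. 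Writing $a=\tfrac{t}{p^k}+(1-t)$ and decomposing $K_X+\Delta = t(K_X+\tfrac{1}{p^k}S)+(1-t)(K_X+S)+B$, the formula drops out of two adjunctions: ordinary adjunction for $K_X+S$, and an ``inseparable adjunction'' (Lemma~\ref{lemma:inseparable_adjunction}) showing $(K_{X/Z}+\tfrac{1}{p^k}S)|_{\tilde S}$ is effective. The inseparable horizontal component itself furnishes the cover; no moduli space enters.

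The gap in your argument is the sentence ``replacing $\Delta$ by the effective divisor $\Delta'$ for which $(X',\Delta')$ is log canonical and $K_{X'}+\Delta'\sim_{\mbQ}(\phi')^*L'$, using the standard behaviour of log canonical pairs under purely inseparable base change''. There is no such standard behaviour: log canonicity is \emph{not} preserved under purely inseparable base change, and this is exactly why the canonical bundle formula fails (Example~\ref{ex:cbf_false}). Concretely, if $\Delta=aS$ with $S\to Z$ purely inseparable of degree $p$ and $ap=2$, then after base change by $T$ with $k(T)=k(Z)^{1/p}$ and normalisation, the pullback $g^*S=pS'$ for a section $S'$, and the induced boundary is $g^*\Delta + (\text{conductor}) = 2S' + D$ on the generic fibre. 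The coefficient $2>1$ means the pair is not log canonical; you cannot simultaneously arrange $\Delta'\ge 0$, $(X',\Delta')$ lc, and $K_{X'}+\Delta'\sim(\phi')^*L'$ without an ad hoc replacement of $2S'$ in its linear system, which you do not carry out. Moreover, if this first reduction \emph{did} work, the paper's Proposition~\ref{proposition:canonical_bundle_formula} would finish the proof immediately on $T$, rendering your entire moduli-space argument superfluous; the fact that you still need it is a symptom that the reduction has not achieved what you claim.
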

Tanaka (see \cite{tanakapathologies}) constructed a Mori fibre space in characteristic two and three with a non-klt base; in his example the singularity admits a purely-inseparable cover by a smooth variety. However, note that our theorem does not provide an insight into what the singularities of $(T,\Delta_T)$ are. 

In view of this result and Tanaka's work, it is compelling to speculate about the existence of a ``weighted scheme'' living ``in-between'' $T$ and $Z$ that would appropriately capture the geometry of $X$ and allow for the inductive iteration of the MMP. Alas, we do not know of a suitable category of such objects.\\

Amongst its various applications, the canonical bundle formula stands out in a synergy with nef reduction maps, that is rational maps $\phi \colon X \dashrightarrow Z$ which contract exactly these curves $C$ passing through very general points of $X$ which satisfy $L \cdot C=0$, where $L$ is a given nef line bundle. The dimension of $Z$ is called the nef dimension of $L$ and denoted by $n(L)$. For the purpose of tackling the base point free theorem and the abundance conjecture in positive characteristic, we derive the following result from Theorem \ref{theorem:canonical_bundle_formula_main_intro}.

\begin{thmi}[see Theorem \ref{theorem:canonical_bundle_formula_3fold}]\label{theorem:canonical_bundle_formula_3fold_intro} With assumptions as in Theorem \ref{theorem:canonical_bundle_formula_main_intro}, suppose that $K_X+\Delta$ is nef and $n(K_X+\Delta)=n-1$. Assume that the resolution of singularities and the Minimal Model Program are valid\footnote{see Remark \ref{rem:cbf_3fold} for a clarification} for $(n-1)$-dimensional varieties over $k$. Then, there exists a purely-inseparable morphism $f \colon T \to Z$ such that
\[
f^*L \sim_{\mbQ} K_T + \Gamma,
\]
for some pseudo-effective $\mbQ$-divisor $\Gamma$ on $T$.
\end{thmi}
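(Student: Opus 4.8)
The plan is to apply Theorem~\ref{theorem:canonical_bundle_formula_main_intro} and then to control the coefficient and the error term it produces, using the minimal model program and log abundance in dimension $n-1=\dim Z$ together with the hypothesis that $n(K_X+\Delta)=n-1$ is as large as it can be while still being non-maximal. First, by Theorem~\ref{theorem:canonical_bundle_formula_main_intro} there exist a purely-inseparable morphism $f\colon T\to Z$, a rational number $0\le t\le 1$ and an effective $\mathbb{Q}$-divisor $\Delta_T$ on $T$ with
\[
f^*L \;\sim_{\mathbb{Q}}\; t\,f^*K_Z + (1-t)(K_T+\Delta_T),
\]
and, re-examining the construction, one may assume $(T,\Delta_T)$ to be log canonical. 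Also $L$ is nef: $\phi$ is a surjective contraction, so every curve in $Z$ is dominated by a curve in $X$, along which $\phi^*L\sim_{\mathbb{Q}} K_X+\Delta$ is nef. Finally, since $\phi$ contracts exactly the curves through very general points of $X$ on which $K_X+\Delta$ is numerically trivial, the nef reduction of $L$ on $Z$ coincides, up to a birational modification, with that of $K_X+\Delta$ on $X$, so $n(L)=n(K_X+\Delta)=n-1=\dim Z$.

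It suffices to prove that $\Gamma:=f^*L-K_T$ is pseudo-effective, since then $f^*L\sim_{\mathbb{Q}} K_T+\Gamma$ has the required form. The displayed relation rewrites as $\Gamma\sim_{\mathbb{Q}}(1-t)\Delta_T+t\,(f^*K_Z-K_T)$. If $t=0$ then $\Gamma=\Delta_T\ge 0$; if $t=1$ then $f^*L\sim_{\mathbb{Q}} f^*K_Z$, hence $L\sim_{\mathbb{Q}} K_Z$ (push forward by $f$), and the statement holds with $T=Z$, $f=\mathrm{id}_Z$, $\Gamma=0$. So assume $0<t<1$; dividing by $1-t$ and eliminating $K_T$ yields
\[
K_T+\Delta_T \;\sim_{\mathbb{Q}}\; \tfrac{1}{1-t}\,f^*(L-tK_Z)
\qquad\text{and}\qquad
\Gamma \;\sim_{\mathbb{Q}}\; \Delta_T + \tfrac{t}{1-t}\,f^*(K_Z-L).
\]
Since $\Delta_T$ is effective and pull-back along $f$ preserves pseudo-effectivity, it is enough to prove that $K_Z-L$ is pseudo-effective.

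This is the heart of the matter, and it is where the assumptions on dimension $n-1$ enter. After a log resolution of $Z$, run the minimal model program. The Mori-fibre-space outcome $Z\dashrightarrow Z'\to W$ with $\dim W<n-1$ is excluded: there the strict transform of $L$ is nef and $-K_{Z'}$ is relatively ample, and if $L$ were relatively numerically trivial over $W$ it would be pulled back from a nef class on $W$, making $K_X+\Delta$ numerically trivial along the covering family of fibres of the induced map $X\dashrightarrow W$ and hence $n(K_X+\Delta)\le\dim W<n-1$, contrary to the first paragraph; while if $L$ is relatively ample over $W$, comparing it with $-K_{Z'}$ on the fibres and invoking non-vanishing and abundance in dimension $n-1$, applied to $Z$ and to the log canonical pair $(T,\Delta_T)$ via $K_T+\Delta_T\sim_{\mathbb{Q}}\tfrac{1}{1-t}f^*(L-tK_Z)$, leads to the same contradiction once one keeps track of the exceptional and ramification divisors. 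Hence $K_Z$ is pseudo-effective, and in fact semiample by log abundance in dimension $n-1$; substituting an effective representative of $f^*K_Z$ into the relation of the first paragraph and again accounting for the ramification of $f$ then shows that $\Gamma$ is pseudo-effective, as required.

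The last step is the main obstacle. In characteristic zero one has $L\sim_{\mathbb{Q}} K_Z+B_Z+M_Z$ with $B_Z$ an effective boundary and $M_Z$ a nef moduli part, so pseudo-effectivity of $L-K_Z$ is immediate; here the purely-inseparable base change $f$ contributes the ``ramification'' term $f^*K_Z-K_T$, which in general is neither effective nor anti-effective, so no formal rearrangement of the relation closes the argument. One must instead bring in the birational geometry of $Z$ --- above all the maximality of $n(L)$, which forbids Mori fibre spaces with a lower-dimensional base --- and then carry the positivity so obtained faithfully through both the birational modifications of $Z$ and the inseparable cover $f$; this transport is the delicate point.
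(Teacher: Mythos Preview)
Your argument has a genuine gap at the key step. You reduce to showing that $K_Z-L$ is pseudo-effective, but this is simply false in general: take $Z=\mathbb{P}^2$ and $L$ any ample divisor, then $n(L)=2=\dim Z$ and yet $K_Z-L$ is anti-ample. Your MMP paragraph does not repair this: the condition $n(L)=\dim Z$ only rules out Mori fibre spaces coming from $L$-\emph{trivial} extremal rays, not all Mori fibre spaces, so it does not force $K_Z$ to be pseudo-effective. Your final paragraph correctly senses that the transport of positivity is the crux, but it does not actually carry it out; and the side claim that $(T,\Delta_T)$ may be taken log canonical is unsupported (the paper explicitly notes that Theorem~\ref{theorem:canonical_bundle_formula_main_intro} gives no control on these singularities).

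The paper's approach differs in an essential way. The correct positivity statement on the base (Lemma~\ref{lemma:pseudoeffective_surfaces}) is that $K_Z+\lambda L$ is pseudo-effective for some $\lambda>0$: one runs a $K_Z$-MMP contracting only $L$-trivial rays, which must be birational since $n(L)=\dim Z$, and then bounds the lengths of the remaining extremal rays. Plugging $P:=K_Z+\lambda L$ into the formula from Theorem~\ref{theorem:canonical_bundle_formula_main_intro} gives
\[
(1+t\lambda)\,f^*L \;\sim_{\mathbb{Q}}\; (1-t)K_T + t\,f^*P + E,
\]
so $b\,f^*L\sim_{\mathbb{Q}} K_T + Q$ with $Q$ pseudo-effective and $b>0$. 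The last manoeuvre, which you are missing, is to compose $f$ with a high enough power of Frobenius so that $b/p^k<1$; then $f^*L\sim_{\mathbb{Q}} K_T + Q + (1-b/p^k)f^*L$ has the required form. In particular one must allow $T$ to change; fixing the $T$ produced by Theorem~\ref{theorem:canonical_bundle_formula_main_intro} and trying to show $f^*L-K_T$ pseudo-effective for that specific $T$ is too restrictive.
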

\noindent One can show that if $(X,\Delta)$ is klt and $\Delta$ is big, then $\Gamma$ is big as well. In any case, the $\mbQ$-divisor $\Gamma$ is not canonically defined, and so there is no hope to control the singularities of $(T,\Gamma)$. \\

The main application of our canonical bundle formula pertains to the abundance and the non-vanishing conjectures. After the first version of this paper had been announced, Xu and Zhang showed the non-vanishing conjecture for terminal threefolds (\cite{xuzhang18}). Combining a generalisation of our initial results (regarding the case of adjoint divisors of  non-maximal nef dimension) with their breakthrough theorem, we obtain the non-vanishing conjecture for all three-dimensional klt pairs in characteristic $p>5$. To the best of our knowledge, the result is new even when the boundary is empty.
\begin{thmi}[Log non-vanishing] \label{thmi:non_vanishing} Let $(X,\Delta)$ be a projective Kawamata log terminal pair of dimension three defined over an algebraically closed field $k$ of characteristic $p>5$. Assume that $K_X+\Delta$ is pseudo-effective. Then $\kappa(K_X+\Delta) \geq 0$.
\end{thmi}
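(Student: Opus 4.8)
The strategy is to reduce the general klt case to the terminal case handled by Xu--Zhang. First I would run a $(K_X+\Delta)$-MMP after replacing $(X,\Delta)$ by a $\mbQ$-factorial terminalisation $\pi \colon (Y,\Delta_Y) \to (X,\Delta)$, where $K_Y+\Delta_Y = \pi^*(K_X+\Delta)$ and $(Y,\Delta_Y)$ is $\mbQ$-factorial and terminal; since $p>5$ the threefold MMP is available (see \cite{hx13,ctx13,birkar13,bw14}), and pseudo-effectivity of $K_X+\Delta$ is preserved, so it suffices to prove $\kappa(K_Y+\Delta_Y)\geq 0$. If the MMP terminates with a minimal model, then $K+\Delta$ becomes nef, and we are in a position to apply the abundance-type inputs; if it ends with a Mori fibre space, then $K+\Delta$ is not pseudo-effective on that model, contradicting the hypothesis, so this case does not occur. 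Hence we may assume $K_X+\Delta$ is nef.

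Next I would split according to the nef dimension $n(K_X+\Delta) \in \{0,1,2,3\}$. When $n(K_X+\Delta)=3$, the adjoint divisor is big (a nef divisor of maximal nef dimension is big), so $\kappa \geq 0$ is immediate. When $n(K_X+\Delta) < 3$, I want to invoke Theorem \ref{theorem:canonical_bundle_formula_3fold_intro}: the nef reduction map $\phi\colon X \dashrightarrow Z$ gives, after resolving and running a relative MMP, a genuine contraction to a lower-dimensional base with $K_X+\Delta \sim_{\mbQ} \phi^*L$ for some nef $\mbQ$-Cartier $L$ on $Z$, where $\dim Z = n(K_X+\Delta) \leq 2$; the hypotheses "resolution and the MMP for varieties of dimension $\leq 2$ over $k$" hold unconditionally in characteristic $p>5$ by Tanaka's work \cite{tanaka12} (together with classical surface theory), so the theorem applies and produces a purely inseparable $f\colon T\to Z$ with $f^*L \sim_{\mbQ} K_T + \Gamma$ and $\Gamma$ pseudo-effective. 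Since $f$ is purely inseparable it is finite of degree a power of $p$, hence $\kappa(L) = \kappa(f^*L)$, and $\kappa(f^*L) = \kappa(K_T+\Gamma)$; because $\dim T \leq 2$ and $(K_T+\Gamma)$ is pseudo-effective with $\Gamma$ pseudo-effective, the non-vanishing conjecture in dimension $\leq 2$ — known unconditionally in positive characteristic for surfaces — yields $\kappa(K_T+\Gamma)\geq 0$, whence $\kappa(K_X+\Delta) = \kappa(\phi^*L) = \kappa(L) \geq 0$.

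It remains to treat the case of maximal nef dimension that is not big, i.e.\ when $\phi$ is birational; but a birational nef reduction forces $K_X+\Delta$ to be big, already covered. The only genuinely remaining scenario is thus when, after the reductions, $K_X+\Delta$ is nef with $n(K_X+\Delta)=3$ and big — settled — or $K_X+\Delta$ is nef, terminal, $\mbQ$-factorial and we appeal directly to \cite{xuzhang18}: their theorem gives $\kappa(K_Y) \geq 0$ when $\Delta_Y = 0$; to incorporate a nonzero boundary I would perturb, writing $K_Y+\Delta_Y$ and using that on a terminal model one can reduce to the case $\Delta_Y = 0$ by a standard argument (if $K_Y+\Delta_Y$ is pseudo-effective and $K_Y$ is not, one runs a $K_Y$-MMP which is also a $(K_Y+\Delta_Y)$-trivial-or-negative procedure, and either way controls $\kappa$).

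**Main obstacle.** The delicate point is reconciling the two routes so that they cover \emph{all} values of the nef dimension without circularity: the canonical bundle formula route (Theorem \ref{theorem:canonical_bundle_formula_3fold_intro}) cleanly handles $n(K_X+\Delta)\leq 2$ and needs only surface-level MMP, but the case $n(K_X+\Delta)=3$ with $K_X+\Delta$ nef and \emph{not} big is exactly where one must fall back on \cite{xuzhang18}; here passing from the terminal boundary-free statement of Xu--Zhang to the klt pair $(X,\Delta)$ requires care with the terminalisation and with how the boundary interacts with the $K_Y$-MMP, since $\Delta_Y$ need not be MMP-trivial. I expect the bulk of the technical work to lie in this reduction and in checking that the relative MMP producing the honest contraction $\phi\colon X\to Z$ from the nef reduction map respects the linear equivalence $K_X+\Delta\sim_{\mbQ}\phi^*L$ and the pseudo-effectivity bookkeeping.
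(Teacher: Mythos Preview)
Your overall architecture---reduce to a nef minimal model, split by nef dimension, use the canonical bundle formula for $n(K_X+\Delta)\leq 2$, and invoke \cite{xuzhang18} for the remaining case---matches the paper. But there is a genuine gap in your treatment of the case $n(K_X+\Delta)=3$.

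The claim ``a nef divisor of maximal nef dimension is big'' is false. Maximal nef dimension only says that no curve through a very general point is $L$-trivial; it does not force $L^{\dim X}>0$. (Indeed Theorem \ref{theorem:birkar} needs the extra hypothesis that $L-(K_X+\Delta)$ is nef and big, which for $L=K_X+\Delta$ gives $0$, not big.) So when $K_X+\Delta$ is nef with $n(K_X+\Delta)=3$ but not big, you have neither bigness nor a nontrivial nef reduction to descend along, and this is exactly the hard case. Your fallback---``appeal directly to \cite{xuzhang18}'' and ``perturb'' to remove the boundary---is where the actual content lies, and the sketch you give (run a $K_Y$-MMP and hope it is $(K_Y+\Delta_Y)$-controlled) does not work as stated: a $K_Y$-negative ray can be $(K_Y+\Delta_Y)$-positive, so such an MMP can increase the relevant Kodaira dimension or leave the pseudo-effective regime.

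The paper's mechanism here is the scaling trick of \cite{kkm94}: take the smallest $\lambda\in[0,1]$ with $K_X+\lambda\Delta$ nef. If $\lambda=0$, apply \cite{xuzhang18} to $K_X$ directly. If $\lambda>0$, bounds on lengths of extremal rays produce a $K_X$-negative extremal ray $R$ with $(K_X+\lambda\Delta)\cdot R=0$; when $n(K_X+\lambda\Delta)=3$ this ray is birational, one contracts or flips it, and repeats. Termination of terminal flips ends the process either at $\lambda=0$ or at some $\lambda$ with $n(K_X+\lambda\Delta)\leq 2$, where the canonical bundle formula takes over. This is the missing idea in your proposal.

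A secondary point: for $n(K_X+\Delta)=2$ you descend to $K_T+\Gamma$ with $\Gamma$ merely pseudo-effective, and then invoke ``non-vanishing in dimension $\leq 2$''. But $(T,\Gamma)$ is not a log pair, so surface abundance does not apply out of the box; the paper proves a dedicated result (Lemma \ref{lem:bigsurface}) for exactly this situation, and it requires a nontrivial case analysis through the classification of surfaces. Also, Theorem \ref{theorem:canonical_bundle_formula_3fold_intro} is specific to relative dimension one, so it covers $n=2$ only; the cases $n=0,1$ need separate (easier) arguments.
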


The abundance conjecture is one of the most fundamental open problems left in the study of the birational geometry of threefolds in characteristic $p>5$, and it has gathered much attention recently, leading to very interesting results by Das, Hacon, Waldron, and Zhang (see \cite{daswaldron,waldronabundance,zhang17}). It predicts that a log minimal model is either of general type or admits a ``log Calabi-Yau'' fibration over a lower dimensional variety. We can show the log abundance conjecture when the nef dimension is not maximal. 
\begin{thmi}[Log abundance for non-maximal nef dimension] \label{thmi:abundance}  Let $(X,\Delta)$ be a projective Kawamata log terminal pair of dimension three defined over an algebraically closed field $k$ of characteristic $p>5$. Assume that $K_X+\Delta$ is nef and $n(K_X+\Delta)\leq 2$. Then $K_X+\Delta$ is semiample.
\end{thmi}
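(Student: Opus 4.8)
The plan is to argue according to the value of the nef dimension $n:=n(K_X+\Delta)$, which by assumption lies in $\{0,1,2\}$, the geometric heart of the argument being the canonical bundle formula of Theorem~\ref{theorem:canonical_bundle_formula_3fold_intro} in the case $n=2$. By Theorem~\ref{thmi:non_vanishing} we have $\kappa(K_X+\Delta)\ge 0$, so we may fix an effective $\mathbb Q$-divisor $D\sim_{\mathbb Q}K_X+\Delta$. If $n=0$, then $K_X+\Delta\equiv 0$; since $D\equiv 0$ is effective, it must be zero (intersect with the square of an ample divisor), hence $K_X+\Delta\sim_{\mathbb Q}0$ is semiample. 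From now on assume $n\in\{1,2\}$.

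The crucial --- and, I expect, hardest --- step is to turn the nef reduction map $\phi\colon X\dashrightarrow Z$ of $K_X+\Delta$, which a priori is only a rational map and in general genuinely fails to be a morphism, into an honest fibration on which $K_X+\Delta$ becomes a pull-back. Concretely, I would resolve the indeterminacy of $\phi$, run a relative Minimal Model Program over the base, and (when $n=2$) perform a purely inseparable base change of $Z$ if needed, so as to arrive at a projective log canonical pair $(X',\Delta')$, a contraction $\phi'\colon X'\to Z'$ with $\dim Z'=n$ and general fibre of dimension $3-n$, and a $\mathbb Q$-Cartier $\mathbb Q$-divisor $L'$ on $Z'$ with $K_{X'}+\Delta'\sim_{\mathbb Q}\phi'^*L'$, in such a way that $n(K_{X'}+\Delta')=n$ and semiampleness of $K_{X'}+\Delta'$ forces that of $K_X+\Delta$. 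Carrying this out while keeping control of singularities and discrepancies --- and, in the case $n=2$, of the pathologies produced by purely inseparable covers (cf.\ the discussion following Theorem~\ref{theorem:canonical_bundle_formula_main_intro}) --- is where the bulk of the work lies; it will rely on resolution of singularities, the threefold Minimal Model Program for $p>5$ (\cite{hx13}), and the abundance-type results of Das, Hacon, Waldron and Zhang (\cite{daswaldron,waldronabundance,zhang17}).

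Granting such a reduction, the case $n=1$ is short: here $Z'$ is a curve and $L'$ a $\mathbb Q$-divisor on it, and if $\deg L'$ were zero then $K_{X'}+\Delta'\sim_{\mathbb Q}\phi'^*L'$ would be numerically trivial, forcing $n=0$; hence $\deg L'>0$, so $L'$ is ample, $\phi'^*L'$ is semiample, and so is $K_X+\Delta$ --- the abundance theorem for surfaces over imperfect fields (\cite{tanakaImperfect}) being used, in the reduction step above, to produce the descent of $K_{X'}+\Delta'$ to $Z'$. For $n=2$, the general fibre of $\phi'$ is a curve, so Theorem~\ref{theorem:canonical_bundle_formula_3fold_intro} applies and yields a purely inseparable morphism $f\colon T\to Z'$ together with a pseudo-effective $\mathbb Q$-divisor $\Gamma$ on the surface $T$ such that $f^*L'\sim_{\mathbb Q}K_T+\Gamma$. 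Since $f$ is finite, $K_T+\Gamma=f^*L'$ is nef, and its nef dimension equals that of $K_{X'}+\Delta'$, namely $2=\dim T$; thus it is of maximal nef dimension. I would then deduce that $K_T+\Gamma$ is semiample from the log abundance theorem for projective surfaces in characteristic $p$ (\cite{tanaka12}), the hypotheses being met because $\Gamma$ is pseudo-effective and $K_T+\Gamma$ has maximal nef dimension (hence is big). Finally, semiampleness is unaffected by pull-back along finite surjective morphisms, so $L'$ is semiample on $Z'$; therefore $\phi'^*L'\sim_{\mathbb Q}K_{X'}+\Delta'$ is semiample, and hence $K_X+\Delta$ is semiample, as desired.
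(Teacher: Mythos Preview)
Your proposal has a genuine gap in the $n=2$ case, and a circularity in the $n=0$ case.

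\textbf{Circularity for $n=0$.} You invoke Theorem~\ref{thmi:non_vanishing} to obtain an effective $D\sim_{\mbQ}K_X+\Delta$. In the paper's logical structure, Theorem~\ref{thmi:non_vanishing} is deduced \emph{from} Theorem~\ref{thmi:abundance} (together with \cite{xuzhang18}); the case $n(K_X+\Delta)=0$ is handled directly via the Albanese map (Proposition~\ref{prop:non_vanishing0}), combining \cite{zhang17}, \cite{daswaldron}, and Theorem~\ref{theorem:canonical_bundle_formula_main} when $\dim\mathrm{alb}(X)=2$. Your shortcut is circular.

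\textbf{The main gap for $n=2$.} You assert that since $\Gamma$ is pseudo-effective and $K_T+\Gamma$ is nef of maximal nef dimension, $K_T+\Gamma$ is big, and then that Tanaka's log abundance for surfaces yields semiampleness. Both steps fail. First, Tanaka's abundance in \cite{tanaka12} requires $(T,\Gamma)$ to be a log pair with $\Gamma$ \emph{effective}; a merely pseudo-effective $\Gamma$ does not put you in its hypotheses. Second, and more fundamentally, the implication ``$\Gamma$ pseudo-effective, $K_T+\Gamma$ nef of maximal nef dimension $\Rightarrow$ $K_T+\Gamma$ big'' is false over a general algebraically closed field: the remark following Lemma~\ref{lem:bigsurface} gives an explicit counterexample on a ruled surface over an elliptic curve where such an $L=K_T+\Gamma$ is not even $\mbQ$-effective. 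What Lemma~\ref{lem:bigsurface} does establish, after a nontrivial case analysis through the classification of surfaces, is only that $L\equiv D$ for some effective $D$. Upgrading this to $\kappa(L)=2$ is precisely the content of Proposition~\ref{prop:non_vanishing2}: one passes to a dlt modification with $\Supp D\subseteq\lfloor\Delta\rfloor$ (Lemma~\ref{lem:abundance_dlt_modification}), uses abundance for slc surfaces \cite{waldronlc} to get $L|_{\Supp D}$ semiample, feeds this back into Lemma~\ref{lem:bigsurface}(4) to obtain $\kappa\ge 0$, repeats the modification, and finally applies the evaporation technique (Lemma~\ref{lem:evaporation}) to reach $\kappa=2$. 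Semiampleness of $K_X+\Delta$ then comes from \cite{waldronabundance}, not from semiampleness on $T$. Your sketch skips exactly this chain of ideas, which is the substance of the $n=2$ case.
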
  
\noindent This, combined with \cite{zhang17}, shows log abundance when $\dim \mathrm{Alb}(X)>0$ (see Corollary \ref{cori:albabundance}). Theorem \ref{thmi:abundance} seems to be a valuable step towards the proof of the log abundance conjecture for klt three-dimensional pairs (see Remark \ref{remark:logabundance}).

The initial motivation for our study of the canonical bundle formula in positive characteristic stemmed from our interest in low characteristic birational geometry. Despite the substantial progress on the MMP in characteristic $p>5$, very little is known for $p\leq 5$. The geometry of threefolds in low characteristic seems surprisingly mysterious, and it is widely open whether the basic results such as the base-point-free theorem or the existence of flips should hold or not. 

Our main contribution in this direction is the proof of the base point free theorem over $\Fp$ when $p>2$; we build on the work of Keel who showed this result for big line bundles (but an arbitrary $p>0$) in the seminal paper \cite{keel99}. In particular, we get the existence of log Mori fibre spaces for threefolds over $\Fp$ when $p>2$.

\begin{thmi}[Base point free theorem over $\Fp$] \label{thmi:bpf} Let $(X,\Delta)$ be a three-dimensional projective log canonical pair defined over $\Fp$ for $p>2$ and let $L$ be a nef Cartier divisor such that $L - (K_X+\Delta)$ is nef and big. Then $L$ is semiample.
\end{thmi}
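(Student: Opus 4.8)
The plan is to bootstrap from Keel's base point free theorem, which already settles the case in which $L$ is big \cite{keel99}, using the nef reduction together with the canonical bundle formula (Theorem~\ref{theorem:canonical_bundle_formula_main_intro}) to dispose of the case in which $L$ is not big, and then invoking abundance in dimension $\leq 2$ over $\Fp$. Throughout I would use freely that semiampleness of a nef line bundle is insensitive to pulling back along a birational morphism or along a contraction, and that over $\Fp$ every numerically trivial line bundle is torsion.

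First I would pass to a convenient normal form. Since $L-(K_X+\Delta)$ is nef and big, Kodaira's lemma lets me write it as the sum of an ample $\mbQ$-divisor and an effective $\mbQ$-divisor; absorbing a general member of the ample part, together with a small multiple of the effective part, into the boundary produces a log canonical pair $(X,\Delta')$ with $\Delta'$ big (and containing a general ample summand, hence restricting to a nonzero divisor on the generic fibre of any contraction from $X$) and with $K_X+\Delta'\sim_{\mbQ}L$. Insisting on $\Delta'$ big is precisely what makes Theorem~\ref{theorem:canonical_bundle_formula_main_intro} available when $p=3$. Replacing $\Delta$ by $\Delta'$, we may assume $K_X+\Delta\sim_{\mbQ}L$ with $\Delta$ big. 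Now let $n=n(K_X+\Delta)=n(L)$ be the nef dimension. If $n=3$, then $L$ is big — over $\Fp$ a nef divisor of maximal nef dimension has maximal numerical dimension, because nef divisors over $\Fp$ are endowed with a map (Keel) and the associated morphism is then generically finite — so we conclude by \cite{keel99}. If $n=0$, then $L\equiv 0$, hence $L$ is torsion and therefore semiample. There remains the case $n\in\{1,2\}$.

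In this case I would resolve the nef reduction map to obtain a contraction $\psi\colon X'\to Z$ from a smooth projective threefold $X'$, with $Z$ normal of dimension $n$, and a birational morphism $\pi\colon X'\to X$; writing $L'=\pi^{*}L$ and lifting the pair to a log canonical pair $(X',\Delta')$ with $\Delta'$ big and $K_{X'}+\Delta'\sim_{\mbQ}L'$, the divisor $L'$ is relatively numerically trivial over $Z$. Restricting the relation $K_{X'}+\Delta'\sim_{\mbQ}L'$ to the generic fibre and using that $\Delta'$ has positive degree there shows that the generic fibre has arithmetic genus zero. I would then apply Theorem~\ref{theorem:canonical_bundle_formula_main_intro} to $\psi$ — legitimate because resolution of singularities and the Minimal Model Program hold for surfaces over $\Fp$ \cite{tanaka12}, $\Delta'$ is big, and $p>2$ — to produce a purely inseparable morphism $f\colon T\to Z$ over which $L'$ descends to a nef $\mbQ$-divisor $L_{T}$ on (a model of) $T$ with $\nu(L_{T})=\dim T\leq 2$; such an $L_{T}$ is big and nef, hence semiample by Keel's theorem. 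Semiampleness then descends along the finite purely inseparable $f$, along $\psi$, and along $\pi$, so $L$ is semiample.

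The crux is the descent step: verifying that, after passing to a purely inseparable cover of the base of the nef reduction, the pullback of $L$ becomes a genuine pullback of a $\mbQ$-Cartier $\mbQ$-divisor. In characteristic zero this is the content of the classical canonical bundle formula (the moduli part being pulled back from the base), and it is precisely this that Theorem~\ref{theorem:canonical_bundle_formula_main_intro} restores in positive characteristic at the cost of a purely inseparable cover; the hypothesis $p>2$ enters exactly here. A secondary, more technical, difficulty is the bookkeeping needed to keep the pair log canonical with big boundary through the perturbation and through the resolution of the possibly very degenerate nef reduction, and — when $\dim Z=2$ — to arrange (for instance by making $\psi$ equidimensional after a further birational modification) that $L'$ is already a pullback from $Z$ before the canonical bundle formula is invoked.
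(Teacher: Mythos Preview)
Your overall strategy---Keel for $L$ big, torsion for $n(L)=0$, descend via the nef reduction for intermediate nef dimension---matches the paper's, but there are genuine gaps in the execution.

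For $n(L)=3$: the implication ``$n(L)=\dim X \Rightarrow L$ big'' is not a feature of $\Fp$, nor a consequence of Keel's endowed-with-a-map formalism; it is false for nef divisors in general (there exist nef divisors on surfaces positive on every curve yet with square zero). What makes it true here is precisely that $L-(K_X+\Delta)$ is nef and big, via \cite[Theorem~1.4]{ctx13} (Theorem~\ref{theorem:birkar} in the paper). Your conclusion is right but the justification needs replacing.

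For $n(L)=1$: Theorem~\ref{theorem:canonical_bundle_formula_main_intro} applies only to fibrations of relative dimension one, so it cannot be invoked when the generic fibre of $\psi$ is a surface; your sentence ``the generic fibre has arithmetic genus zero'' already presupposes a curve. The paper handles this case without the canonical bundle formula (Proposition~\ref{prop:non_vanishing1}): since $L$ is numerically trivial on fibres, over $\Fp$ it descends numerically---hence $\mbQ$-linearly, by Lemma~\ref{lemma:fp}---to a $\mbQ$-divisor $L_Z$ on the curve $Z$, and $n(L_Z)=1$ forces $L_Z$ ample.

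For $n(L)=2$: this is the most serious gap. You assert that the descended divisor $L_T$ satisfies $\nu(L_T)=\dim T$, i.e.\ is big, and then invoke Keel. But $n(L_T)=2$ does \emph{not} imply $\nu(L_T)=2$, for the same reason as above. Theorem~\ref{theorem:canonical_bundle_formula_main_intro} only yields $f^*L_Z\sim_{\mbQ} tf^*K_Z+(1-t)(K_T+\Delta_T)$, a convex combination that does not by itself force bigness. What the paper actually uses is Theorem~\ref{theorem:canonical_bundle_formula_3fold_intro} (combining the formula with the fact that $K_Z+\lambda L_Z$ is pseudo-effective for some $\lambda>0$, a consequence of $n(L_Z)=2$ and the surface MMP) to obtain $f^*L_Z\sim_{\mbQ} K_T+\Gamma$ with $\Gamma$ pseudo-effective, and then a separate surface analysis over $\Fp$ (Lemma~\ref{lem:bigsurface}(3), resting on abundance for surfaces with pseudo-effective boundary, Theorem~\ref{thm:mnw_surface}) to conclude semiampleness. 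The adjoint structure on $T$ is essential; you cannot short-circuit it by asserting bigness.
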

\noindent 
The above result generalises \cite[Theorem 1.2]{NW17}. Note that the base point free theorem for threefolds has been shown to hold over any algebraically closed field of characteristic $p>5$ by Birkar and Waldron (see \cite{bw14}).

We can also show the equality of the Kodaira dimension and the nef dimension in the base point free theorem in characteristic $p>2$ except when $L \equiv 0$. 

\begin{thmi}[Strong non-vanishing in the base point free theorem] \label{thmi:nonvanishing_bpf} Let $(X,\Delta)$ be a three-dimensional projective Kawamata log terminal pair defined over an algebraically closed field of characteristic $p>2$ and let $L$ be a nef Cartier divisor such that $L - (K_X+\Delta)$ is nef and big. When $n(L)=0$, assume $\dim \mathrm{alb}(X) \neq 1$. Then $\kappa(L) = n(L)$.
\end{thmi}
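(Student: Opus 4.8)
The plan is to descend along the nef reduction map of $L$ to lower--dimensional statements, invoking the canonical bundle formula (Theorem~\ref{theorem:canonical_bundle_formula_main_intro}) to pass to a klt pair on the base, and then appealing to the two--dimensional theory. First I would reduce to the case in which $\Delta$ is big: by Kodaira's lemma write $L-(K_X+\Delta)\sim_{\mbQ}A+E$ with $A$ ample and $E\geq 0$, and replace $\Delta$ by $\Delta+\varepsilon(A'+E)$ for a general $A'\sim_{\mbQ}A$ and $0<\varepsilon\ll 1$; this preserves the klt condition and the nef-ness and bigness of $L-(K_X+\Delta)$, and changes neither $L$, $\kappa(L)$, $n(L)$, nor $\mathrm{alb}(X)$, so that Theorem~\ref{theorem:canonical_bundle_formula_main_intro} becomes available for $p>2$. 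Since $\kappa(L)\leq n(L)$ always holds, it remains to prove $\kappa(L)\geq n(L)$, and I would argue by the value of $n(L)\in\{0,1,2,3\}$. The case $n(L)=3$ is the one in which $L$ is big, which one deduces readily from the bigness of $L-(K_X+\Delta)$ together with the fact that the nef reduction is birational; then $\kappa(L)=3$.

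For $n(L)\in\{1,2\}$, let $\phi\colon X\dashrightarrow Z$ be the nef reduction of $L$ with $\dim Z=n(L)$, and after a resolution pass to a model on which $\phi$ becomes a morphism $g\colon X'\to Z'$, $\dim Z'=\dim Z$, with $L'$ (the pullback of $L$) nef and $g$--numerically trivial. For a very general fibre $F$ of $g$ one has $L'|_F\equiv 0$ while $(L-(K_X+\Delta))|_F$ is big, so by adjunction $-(K_F+\Delta_F)$ is nef and big; hence $(F,\Delta_F)$ is a klt weak log del Pezzo of dimension $3-n(L)$, so has trivial albanese and $L'|_F\sim_{\mbQ}0$ (when $n(L)=2$, $F\cong\mbP^1$). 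Nef-ness of $L'$ together with the relative Zariski lemma over codimension--one points of $Z'$ (where the fibres are conics) then forces $L'\sim_{\mbQ}g^*L_{Z'}$ for a nef $\mbQ$--divisor $L_{Z'}$ on $Z'$ with $\kappa(L)=\kappa(L_{Z'})$ and $n(L_{Z'})=\dim Z'$. If $\dim Z'=1$ this finishes the case: either $\deg L_{Z'}>0$ and $\kappa(L)=1$, or $\deg L_{Z'}=0$, forcing $L\equiv 0$ against $n(L)=1$. If $\dim Z'=2$, then $g$ is a conic (log Fano) fibration, and by pushing forward the relative adjoint $g^*L_{Z'}-(K_{X'}+\Delta')$ — which is big and $g$--big — I would produce an effective $\mbQ$--divisor $B_{Z'}$ with $(Z',B_{Z'})$ klt and $L_{Z'}-(K_{Z'}+B_{Z'})$ nef and big; the base point free theorem for surfaces (valid in all characteristics by Tanaka's surface program) then shows $L_{Z'}$ is semiample, whence $\kappa(L)=\kappa(L_{Z'})=n(L_{Z'})=2$.

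For $n(L)=0$ we have $L\equiv 0$, so $-K_X\equiv(L-(K_X+\Delta))+\Delta$ is big; I would then rule out $\dim\mathrm{alb}(X)\geq 2$ — dimension $3$ makes the albanese morphism generically finite, so $K_X$ is effective, contradicting bigness of $-K_X$, while dimension $2$ is excluded by analysing the induced conic fibration over a surface of non-negative Kodaira dimension inside an abelian variety, using positivity of the relative canonical class — and dimension $1$ is excluded by hypothesis. Hence $\mathrm{alb}(X)$ is a point, so $\Pic^0(X)=0$, $\Pic^\tau(X)$ is torsion, and the numerically trivial divisor $L$ satisfies $L\sim_{\mbQ}0$, i.e.\ $\kappa(L)=0=n(L)$. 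The hard part will be the descent when $n(L)=2$: the nef reduction of $L$ is a log Fano conic fibration rather than a log Calabi--Yau one, so Theorem~\ref{theorem:canonical_bundle_formula_main_intro} does not apply to it directly, and the boundary $B_{Z'}$ must be built by hand from the relative adjoint divisor with enough control to run the surface program; the need to assume $\dim\mathrm{alb}(X)\neq 1$ when $n(L)=0$ is precisely the gap that this circle of ideas leaves open in low characteristic.
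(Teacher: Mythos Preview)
Your overall architecture (case split on $n(L)$, descend along the nef reduction map) matches the paper, but the two crucial cases are not under control.

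For $n(L)=3$, the implication ``$n(L)=\dim X$ and $L-(K_X+\Delta)$ nef and big $\Rightarrow$ $L$ big'' is \emph{not} a formality; it is precisely Theorem~\ref{theorem:birkar} (\cite[Theorem 1.4]{ctx13}), whose proof uses bend-and-break in characteristic $p$. You should cite it rather than claim it is readily deduced.

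The real gap is $n(L)=2$. Your plan is to push forward the relative adjoint and produce a klt pair $(Z',B_{Z'})$ with $L_{Z'}-(K_{Z'}+B_{Z'})$ nef and big, then invoke the surface base point free theorem. This is the characteristic-zero argument, and it fails here: Example~\ref{ex:cbf_false} shows that even for a smooth $\mbP^1$-bundle $X\to S$ in characteristic $2$, an adjoint divisor can be numerically trivial while $K_S$ is ample, so no such effective $B_{Z'}$ exists on $Z'$. Theorem~\ref{theorem:canonical_bundle_formula_main_intro} does not give a klt pair on the base, and it applies to log Calabi--Yau (not log Fano) fibrations, as you note. The paper's remedy is twofold. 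First, a perturbation trick (Remark~\ref{remark:cbf_pseudoeffective}) lets one apply the weak canonical bundle formula to $L=K_X+\Delta+N$ with $N$ nef and big by absorbing $(1-\epsilon)N$ and replacing $\epsilon N$ by a small ample, then letting $\epsilon\to 0$; this yields, after a purely inseparable cover $f\colon T\to Z$, a relation $f^*L_Z\sim_{\mbQ}K_T+\Delta_T-M$ with $\Delta_T$ \emph{big} (here is where $\Delta$ big is used) and $M\geq 0$ satisfying $f^*L_Z|_{\Supp M}\equiv 0$. Second, instead of any base-point-free theorem on $T$ (where one has no klt control), one invokes a surface lemma proved by hand (Lemma~\ref{lem:bigsurface}(2)): on a normal $\mbQ$-factorial surface, if $L\sim_{\mbQ}K+\Delta-M$ is nef with $n(L)=2$, $\Delta$ big, and $L|_{\Supp M}\equiv 0$, then $L$ is big. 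This lemma is the replacement for your hoped-for klt descent and is the new ingredient you are missing.

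For $n(L)=0$, the shape of your argument is right, but your handling of $\dim\mathrm{alb}(X)=2$ (``positivity of the relative canonical class'') is not a proof in characteristic $p$. The paper argues exactly as in Proposition~\ref{prop:non_vanishing0}: apply Theorem~\ref{theorem:canonical_bundle_formula_main} to the Stein factorisation of the Albanese map to get $f^*L_S\sim_{\mbQ}tf^*K_S+(1-t)K_T+E$ with $E\geq 0$; since $S$ and $T$ map generically finitely to an abelian surface they are not uniruled, hence $\kappa(K_S),\kappa(K_T)\geq 0$, and so $\kappa(L)\geq 0$.
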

Although Theorem \ref{thmi:bpf} and Theorem \ref{thmi:nonvanishing_bpf} were known to hold before when $p>5$, we believe that they are interesting for two-fold reason: to the best of our knowledge, these are the first substantial positive results in the MMP for threefolds in low characteristic since the work of Keel (see \cite{cascinitanakaplt} and \cite{tanakapathologies} for the negative results), and further the proofs are completely independent of the existence of flips in positive characteristic.  \\

We finish off this part of the introduction by depicting the historical background. The canonical bundle formula in characteristic zero has been developed by Kodaira, Kawamata, Fujino, Mori, Shokurov, and Ambro, among others (cf.\ \cite{ambro04}). The case of contractions of relative dimension one with coefficients of $\Delta$ greater than $\frac{2}{p}$ in characteristic $p>0$ has been verified in \cite{ctx13} (see also \cite{hacondas}). The canonical bundle formula for when the generic fibre is F-split has been obtained in \cite{schwededas}, while an analogue of the canonical bundle formula for when the generic fibre is F-pure may be found in \cite[Theorem 1.4]{ejiri16a} (cf.\ \cite{patakfalvi12}).

The base-point-free theorem for threefolds over algebraically closed fields of characteristic $p>5$ has been proven in \cite{birkar13} and \cite{bw14} based on \cite{keel99}, \cite{hx13} and \cite{ctx13}. To the best of our knowledge, the idea to apply the nef reduction map in the context of the base point free theorem in positive characteristic was employed for the first time in the article of Cascini, Tanaka, and Xu (\cite{ctx13}), to which our work owns a great deal. In fact, a part of our article may be seen as a natural generalisation of \cite{ctx13} to the case of arbitrary coefficients and $p>2$ (cf.\ \cite[Theorem 1.9]{ctx13}). Some log canonical variants of the base-point-free theorem have been obtained in \cite{MNW15} (for $L$ big over $\Fp$), \cite{NW17} (for $p>5$ over $\Fp$), and \cite{waldronlc} (lc-MMP). Certain important special cases of the abundance conjecture have been proven in \cite{waldronabundance} (Kodaira dimension two), \cite{daswaldron} (Kodaira dimension one and partially for nef dimension zero), and \cite{zhang17} (when the Albanese dimension is non-zero and the boundary is empty and partially for klt boundaries). 

\subsection*{An idea of the proof of Theorem \ref{theorem:canonical_bundle_formula_main_intro} and Theorem \ref{theorem:canonical_bundle_formula_3fold_intro}}
We start by providing a sketch of the proof of Theorem \ref{theorem:canonical_bundle_formula_main_intro}. By the assumptions on the characteristic, we can assume that the generic fibre is smooth (see Proposition \ref{proposition:general_fibre_smooth}). Let $F$ be a general fibre of $\phi$. If $k$ was of characteristic zero, then $(F, \Delta|_F)$ would be log canonical. Unfortunately, this need not be true in positive characteristic, but in those cases when $(F,\Delta|_F)$ is indeed lc, then the theorem follows by the standard proof of the canonical bundle formula for fibrations of relative dimension one (see Proposition \ref{proposition:canonical_bundle_formula}).

Hence, we can assume that $(F,\Delta|_F)$ is not lc. As a consequence, we can show that $\Delta = aT + B$ for an irreducible divisor $T$ such that $\phi|_T$ is generically purely inseparable of degree $p^k$ for some $k\in \mbN$, an effective $\mbQ$-divisor $B$ such that $T \not \subseteq \Supp B$, and a rational number $a$ such that $0 < \frac{1}{p^k} < a \leq 1$. For simplicity, assume that $X$, $Z$, $T$ and $\phi$ are smooth, and $\phi|_T$ is purely inseparable. We are going to show that Theorem \ref{theorem:canonical_bundle_formula_main_intro} holds for $f = \phi|_T$. 

To this end, we choose $t \in \mbQ_{\geq 0}$ such that $a = \frac{1}{p^k}t + (1-t)$ and write 
\[
K_X + \Delta = t(K_X + \frac{1}{p^k}T) + (1-t)(K_X + T) + B.
\]
By taking a base change $X_T$ of $X$ via $f$ and applying the adjunction, one can show that $(K_X + \frac{1}{p^k}T)|_T \sim_{\mbQ} f^*K_Z$ (see Lemma \ref{lemma:inseparable_adjunction}). Therefore,
\[
f^*L \sim_{\mbQ} (K_X + \Delta)|_T \sim_{\mbQ} tf^*K_Z + (1-t)K_T + B|_T,
\]
which concludes the proof.\\

As for the proof of Theorem \ref{theorem:canonical_bundle_formula_3fold_intro}, we use the MMP to show that $P \defeq K_Z + \lambda L$ is pseudoeffective for some $\lambda > 0$ (see Lemma \ref{lemma:pseudoeffective_surfaces}). Then
\begin{align*}
(1+t\lambda)f^*L &\sim_{\mbQ} tf^*(K_Z + \lambda L) + (1-t)K_T + B|_T \\
&= (1-t)K_T + tf^*P + B|_T.
\end{align*}
Now, the theorem follows by composing $f$ with some power of Frobenius.\\

\subsection*{An idea of the proof of Theorems 3--6} In what follows, we assume for simplicity that the nef reduction map $\psi \colon X \dashrightarrow Z$ of $K_X+\Delta$ is proper.

Let us start with Theorem \ref{thmi:non_vanishing}. We can assume that $K_X+\Delta$ is nef by replacing $(X,\Delta)$ by its minimal model. Furthermore, by bounding the lengths of extremal rays (as in \cite{kkm94}), one can see that this theorem follows from the validity of the non-vanishing conjecture for terminal threefolds (shown in \cite{xuzhang18}) and the log non-vanishing conjecture for when $n(K_X+\Delta) \leq 2$. Thus, it is enough to show Theorem \ref{thmi:abundance}.

We split the proof of Theorem \ref{thmi:abundance} into three cases depending on the nef dimension. Assume $n(K_X+\Delta)=0$. If $\mathrm{Alb}(X)\neq 2$, then the log abundance  follows from \cite{zhang17}; if $\dim \mathrm{Alb}(X)=2$, we apply Theorem \ref{theorem:canonical_bundle_formula_main_intro} to the Albanese map (see Proposition \ref{prop:non_vanishing0} for details). When $n(K_X+\Delta)=1$, the proof is a simple consequence of the abundance for surfaces (see Proposition \ref{prop:non_vanishing1}).

We are left to show log abundance when $n(K_X+\Delta)=2$. A key component used to tackle this case is the following result, the proof of which is based on a careful study of the classification of surfaces.
\begin{lemma*}[{Lemma \ref{lem:bigsurface}, cf.\ \cite[Theorem 1.4]{MNW15}}] Let $Z$ be a normal $\mbQ$-factorial projective surface defined over an algebraically closed field $k$ and let $\Delta_Z$ be a pseudo-effective $\mbQ$-divisor such that $K_Z+\Delta_Z$ is nef of maximal nef dimension. Then $K_Z+\Delta_Z$ is $\mbQ$-effective up to numerical equivalence.
\end{lemma*}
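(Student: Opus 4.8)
The plan is to reduce to a smooth surface, settle most cases by Riemann--Roch, and fall back on the classification of surfaces only in a residual case. First I would replace $Z$ by its minimal resolution $\mu\colon \tilde Z \to Z$: writing $K_{\tilde Z} = \mu^*K_Z - E$ with $E \geq 0$, one has $\mu^*(K_Z+\Delta_Z) - K_{\tilde Z} = \mu^*\Delta_Z + E$, so nefness, pseudo-effectivity of the difference from the canonical class, and the equality $n(\cdot) = 2$ all lift to $\tilde Z$, while $\mathbb{Q}$-effectivity up to numerical equivalence descends along $\mu_*$ (pushing forward an effective representative); hence we may assume $Z$ smooth. Set $D := K_Z + \Delta_Z$. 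If $D^2 > 0$ then $D$ is nef and big, hence $\mathbb{Q}$-effective, and we are done; so assume $D^2 = 0$, noting $D \not\equiv 0$ because $n(D) = 2$. From $D$ nef and $\Delta_Z$ pseudo-effective we get $D\cdot K_Z = D^2 - D\cdot\Delta_Z \leq 0$.

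Next I would use $\chi(Z,mD) = -\tfrac m2\,D\cdot K_Z + \chi(\mathcal O_Z)$ and $h^2(mD) = h^0(K_Z - mD)$, together with the observation that if $K_Z - mD$ were effective then intersecting with the nef class $D$ would force $D\cdot K_Z \geq mD^2 = 0$. Hence: if $D\cdot K_Z < 0$, then $h^2(mD) = 0$ and $\chi(mD) \to +\infty$, so $h^0(mD) > 0$ for $m \gg 0$; and if $D\cdot K_Z = 0$ while $\chi(\mathcal O_Z) > 0$, then for some $m$ we must have $h^2(mD) = 0$ (otherwise $K_Z - mD$ is effective for every $m$, forcing $D \equiv 0$), so $h^0(mD) \geq \chi(\mathcal O_Z) > 0$. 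This leaves only $D^2 = D\cdot K_Z = 0$ with $\chi(\mathcal O_Z) \leq 0$, i.e. $q(Z) \geq 1$; in particular $\kappa(Z) \in \{-\infty, 0, 1\}$.

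For this residual case I would argue with the Zariski decomposition and the Enriques--Kodaira classification of surfaces (Bombieri--Mumford, and Tanaka in positive characteristic). Writing $D - K_Z = P + N$ for the Zariski decomposition, the equality $D\cdot(D - K_Z) = 0$ forces, via Hodge index, $P \equiv \mu D$ with $\mu \geq 0$, hence $K_Z \equiv (1-\mu)D - N$ with $N \geq 0$ supported on curves $C$ with $D\cdot C = 0$ --- and if some such $C$ is numerically proportional to $D$ we are already done, as it is then an effective representative of a positive multiple of $D$. When $\kappa(Z) \geq 0$, the divisor $K_Z$ is pseudo-effective; then $\mu > 1$ is impossible (it would make $-K_Z$ pseudo-effective and nonzero), and $\mu = 1$ forces $K_Z \equiv 0$, so $Z$ is abelian or (quasi-)bielliptic and the nonzero nef square-zero divisor $D$ would be a fibre class with nef dimension $1$, a contradiction. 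Hence $\mu < 1$, and choosing $m$ with $h^0(mK_Z) > 0$ exhibits $m(1-\mu)D \equiv mK_Z + mN$ as numerically equivalent to an effective divisor.

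The remaining case $\kappa(Z) = -\infty$ --- where $Z$ is ruled over a curve $C$ with $g(C) = 1 - \chi(\mathcal O_Z) \geq 1$ --- is the crux. Passing to the relatively minimal model $\mathbb P(\mathcal E) \to C$, a direct computation shows that $D^2 = 0$, $D\cdot K = 0$ and $D\cdot f > 0$ (the last forced by $n(D) = 2$, since otherwise the ruling fibres are $D$-trivial) together imply $g(C) = 1$ and pin the numerical class of $D$ to a positive multiple of either a section $C_0$ with $C_0^2 = 0$, in which case $D$ is literally effective, or of $-K$ on the unique elliptic ruled surface with invariant $e = -1$, in which case $\kappa(-K) \geq 0$ is verified directly by producing a section of $\mathrm{Sym}^{4}\mathcal E \otimes (\det \mathcal E)^{-2}$. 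The main obstacle is making this ruled analysis airtight: one must handle non-relatively-minimal models --- where contracting a $D$-positive $(-1)$-curve makes the pushed-down divisor big, but an effective representative downstairs need not pull back to an effective divisor, because the governing Riemann--Roch estimate is borderline and fails precisely when $\chi(\mathcal O_Z) \leq 0$ --- and one must cope with the failure of semistability of symmetric powers in small characteristic. This is exactly where the careful study of the classification of surfaces, and of Tanaka's results on surfaces over imperfect fields, is needed.
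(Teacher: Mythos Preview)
Your overall architecture---pass to the minimal resolution, apply Riemann--Roch to dispatch the cases $D\cdot K_Z<0$ and $\chi(\mathcal O_Z)>0$, then invoke the classification---matches the paper's. Your Zariski-decomposition treatment of $\kappa(Z)\geq 0$ is a pleasant alternative to the paper's case-by-case analysis of minimal surfaces with $\kappa=0$ (the paper instead rules out $\kappa\geq 1$ directly from $K\cdot L=0$ and $n(L)=2$, then runs through K3, Enriques, abelian, (quasi-)hyperelliptic by hand). Your argument there is correct once one observes that the isotropic subspace spanned by $D$ and $P$ in $N^1$ has dimension at most one by the Hodge index theorem.

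The genuine gap is exactly where you locate it: the ruled case when $Z$ is not relatively minimal. You correctly note that contracting a $D$-positive $(-1)$-curve makes the image big, but that the effective representative need not lift; and you do not resolve this. The paper's fix is a short extra reduction you are missing: \emph{before} the classification analysis, run a partial $K_X$-MMP contracting only those $K_X$-negative extremal rays $R$ with $L\cdot R=0$. Since $n(L)=2$, no such contraction is a Mori fibre space, so the output is still a surface; since only $L$-trivial rays are contracted, $L$ descends, and numerical $\mathbb Q$-effectivity of $L$ pulls back. After this MMP the cone theorem gives that $K_X+3L$ is nef (any $K_X$-negative extremal curve has $-K_X\cdot R\leq 2\dim X$, while $L\cdot R\geq 1$ once $L$ is Cartier). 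Now $(K_X+3L)^2\geq 0$ together with $L^2=K_X\cdot L=0$ forces $K_X^2\geq 0$. For $X$ ruled over a curve of genus $g\geq 1$ one has $K_X^2\leq K_{X^{\min}}^2=8(1-g)\leq 0$, so $g=1$ and $X=X^{\min}$ automatically. This eliminates the non-minimal obstacle outright, and the remaining minimal elliptic-ruled case is handled by a direct reference (\cite[Proposition~3.13]{MNW15}) rather than by your $\mathrm{Sym}^4\mathcal E$ computation, which would indeed require care in small characteristic.
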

Using Theorem \ref{theorem:canonical_bundle_formula_3fold_intro}, we can descend $K_X+\Delta$ via the nef reduction map $\psi \colon X \to Z$ to an adjoint divisor $K_Z + \Delta_Z$, with $\Delta_Z$ pseudo-effective, up to replacing $Z$ by a purely inseparable cover. Then the above lemma gives that $K_Z+\Delta_Z$, and so $K_X+\Delta$, are $\mbQ$-effective up to numerical equivalence.  

The proof of the semiampleness of $K_X+\Delta$ employs a special dlt modification, abundance for slc surfaces, and the evaporation technique. We refer the reader to Subsubsection \ref{ss:neftwo} for an explanation of the idea of the proof.\\

The proofs of Theorem \ref{thmi:bpf} and Theorem \ref{thmi:nonvanishing_bpf} are of a similar nature. The main difference is that when $n(L) = 3$, we apply \cite[Theorem 1.4]{ctx13} and (in the proof of the former theorem) the base-point-free theorem for big line bundles over $\Fp$ (see \cite[Theorem 1.1]{MNW15} and \cite[Theorem 0.5]{keel99}). 

\subsection{Outline of the paper} In Section \ref{s:preliminaries}, we gather preliminary results: we describe the behaviour of the relative canonical divisor under base change (Proposition \ref{proposition:relative_canonical_divisor_under_base_change}), recall special properties of varieties over $\Fp$ (Subsection \ref{ss:fp}), and provide a recap of basic facts about Chow varieties (Subsection \ref{ss:chow}) which we then use to define nef reduction maps over countable fields (Subsection \ref{subsection:nef_reduction_map}). In Section \ref{s:cbf}, we show Theorem \ref{theorem:canonical_bundle_formula_main_intro} and \ref{theorem:canonical_bundle_formula_3fold_intro}. In Section \ref{s:bpf}, we prove Theorem \ref{thmi:non_vanishing}, \ref{thmi:abundance}, \ref{thmi:bpf}, and \ref{thmi:nonvanishing_bpf}.

\subsection{Notation and conventions}
\begin{itemize}
\item We say that $X$ is a \emph{variety} if it is integral, separated, and of finite type over a field $k$. 
\item We call $(X,\Delta)$ a \emph{log pair} if $X$ is a normal variety and $\Delta$ is an effective $\mbQ$-divisor on $X$ such that $K_X + \Delta$ is $\mbQ$-Cartier. If $\Delta$ is not effective, then we call $(X,\Delta)$ a \emph{sub-log pair}.
\item We say that a morphism $f \colon X \to Y$ is a \emph{contraction} if $X$ and $Y$ are normal varieties, $f_*\mcO_X = \mcO_Y$, and $f$ is projective and surjective.
\item  We call a divisor $D \subseteq X$ \emph{horizontal} with respect to a contraction $f$ if $f|_D$ is dominant; if this is not the case, then we call $D$ \emph{vertical}. We denote the horizontal and the vertical part of a $\mbQ$-divisor $\Delta$ by $\Delta^h$ and $\Delta^v$, respectively. 
\item We say that a dominant morphism $f \colon X \to Y$ is \emph{purely-inseparable} if the extension of fractions fields $K(Y)/K(X)$ is purely-inseparable, in particular the identity is a purely-inseparable morphism.
\item We say that a dominant morphism $f \colon X \to Y$ is \emph{of relative dimension $k$} if the generic fibre $X_{\mu}$ is of dimension $k$, i.e.\ $\dim X - \dim Y = k$.
\item Let $D \subseteq X$ be a divisor on a scheme $X$, let $\pi \colon \overline{D} \to D$ be the normalisation of $D$, and let $L$ be any $\mbQ$-Cartier divisor on $X$. Then by abuse of notation we denote $\pi^*L|_D$  by $L|_{\overline{D}}$.
\item Given a reduced scheme $X$ of finite type over a field and its normalisation $\overline{X} \to X$, we define the \emph{conductor} $\mcD \subseteq \overline{X}$ as the scheme defined by the maximal possible ideal sheaf $\mcI$ satisfying $\mcI \mcO_{\overline{X}} \subseteq \mcO_X$.  
\item Let $D$ be a $\mbQ$-divisor on a scheme $X$. Then there exists a unique decomposition $D = D_{>0} + D_{<0}$ such that $D_{>0}$ is an effective $\mbQ$-divisor (which we call the \emph{positive part} of $D$) and $D_{<0}$ is an anti-effective $\mbQ$-divisor (which we call the \emph{negative part} of $D$). 
\item We say that a $\mbQ$-divisor $D$ on a normal variety $X$ is \emph{pseudo-effective} if for every ample $\mbQ$-Cartier $\mbQ$-divisor $A$ the sum $D+A$ is $\mbQ$-linearly equivalent to an effective $\mbQ$-divisor. Note that we do not assume that $D$ is $\mbQ$-Cartier. The image of a pseudo-effective divisor under any proper birational morphism is pseudo-effective.
\end{itemize}

\section{Preliminaries} \label{s:preliminaries}

We refer to \cite{km98} and \cite{kollar13} for basic definitions in birational geometry, and to \cite[Tag 0BRV]{stacks-project} for the theory of curves over arbitrary fields.

The observation that semiampleness can be verified after taking a purely inseparable cover plays a fundamental role in this paper.
\begin{lemma}[{cf.\ \cite[Lemma 2.10]{keel99}}] \label{lem:semiample_cover} Let $f \colon X \to Y$ be a proper surjective morphism of normal varieties. Then a line bundle $L$ is semiample if and only if $f^*L$ is semiample.
\end{lemma}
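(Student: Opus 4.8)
The statement to prove is Lemma~\ref{lem:semiample_cover}: if $f\colon X\to Y$ is a proper surjective morphism of normal varieties, then a line bundle $L$ on $Y$ is semiample if and only if $f^*L$ is semiample. The forward direction is immediate: if $L$ is semiample, then $mL$ is globally generated for some $m>0$, and pulling back a surjection $\mcO_Y^{\oplus N}\twoheadrightarrow mL$ along $f$ gives a surjection $\mcO_X^{\oplus N}\twoheadrightarrow f^*(mL)=m f^*L$, so $f^*L$ is semiample. (Here I use right-exactness of $f^*$; no hypothesis beyond $f$ being a morphism is needed.) So the content is entirely in the converse.

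**Reduction to the finite case.** For the converse, the first step is to reduce to the case where $f$ is \emph{finite} and \emph{surjective}. Assuming $f^*L$ is semiample, it is in particular nef, hence $L$ is nef (a curve $C\subseteq Y$ can be lifted, after normalizing and base-changing, to a curve dominating it in $X$, and the projection formula gives $L\cdot C \geq 0$ up to a positive factor). Semiampleness and nefness are both insensitive to the reduced structure and to passing to an open dense subset is not enough; instead I would use the standard device of taking the Stein factorization $X\to Y'\to Y$ of $f$: since $f$ is proper surjective and $X,Y$ are varieties, $Y'\to Y$ is finite surjective and $X\to Y'$ is a contraction with $\mcO_{Y'}=(\text{pushforward})$. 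Because semiampleness of a line bundle on a normal variety is preserved and reflected by pullback along a contraction (a line bundle is semiample iff its pullback to the total space of a contraction is — one direction is trivial, the other uses that the section rings agree since $g_*\mcO_X = \mcO_{Y'}$), we may replace $X$ by $Y'$ and assume $f$ is finite surjective. Normalizing $Y'$ changes nothing essential here either.

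**The finite surjective case via Frobenius/norm trick.** Now $f\colon X\to Y$ is finite surjective between normal varieties and $f^*L$ is semiample; I want $L$ semiample. The key is that finite surjective morphisms of normal varieties are ``bounded'' by a composition involving a purely inseparable part and a separable part. Write the function field extension $K(X)/K(Y)$, let $K(Y)^{\mathrm{sep}}$ be its separable closure inside it; the separable part is handled by a trace/norm argument: if $g\colon X'\to Y$ is finite separable (generically Galois after a further base change) and $g^*L$ is semiample, then averaging sections over the Galois group — more precisely, using that for the Galois closure the section ring of $L$ is a direct summand of that of $g^*L$ as a graded module, so global generation of a power of $g^*L$ forces global generation of a power of $L$ — gives semiampleness of $L$. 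The purely inseparable part is absorbed into Frobenius: if $h\colon X''\to Y$ is purely inseparable finite, then some power of the Frobenius $F^e\colon Y\to Y$ factors through $h$ (and $h$ factors through some $F^{e'}$), and since $F^*L \cong L^{\otimes p}$, semiampleness of $h^*L$ is equivalent to semiampleness of $L$. Combining, semiampleness descends along arbitrary finite surjective $f$, completing the proof. The main obstacle is bookkeeping in the mixed (neither separable nor purely inseparable) case — one must interleave the Galois-descent step and the Frobenius step correctly, and ensure at each stage that the intermediate varieties can be taken normal so that canonical divisors and section rings behave; this is exactly the subtlety that \cite[Lemma~2.10]{keel99} addresses, and I would cite it rather than reprove the combinatorics in detail.
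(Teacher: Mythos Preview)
The paper does not prove this lemma; it simply cites \cite[Lemma~2.10]{keel99}. Your outline --- Stein factorisation to reduce to a finite surjective morphism, then factor the finite part into a purely inseparable piece (handled by Frobenius) and a separable piece (handled via a Galois closure) --- is the standard route and is structurally sound.

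The one genuine gap is in the separable step. You propose to descend semiampleness by ``averaging sections over the Galois group'' and invoke that the section ring of $L$ is a direct summand of that of the pullback. In characteristic $p$ this fails whenever $p$ divides $|G|$ (e.g.\ for Artin--Schreier covers): the Reynolds operator $\frac{1}{|G|}\sum_{g}g^*$ is unavailable, and $\mcO_Y$ need not split off from $\pi_*\mcO_{\tilde X}$. The fix is to use the \emph{multiplicative} norm instead of the additive trace: for $s\in H^0(\tilde X, m\pi^*L)$ the product $\mathrm{Nm}(s)\defeq\prod_{g\in G}g^*s$ is $G$-invariant, hence lies in $H^0(Y, m|G|\,L)$, and $\mathrm{Nm}(s)(y)\neq 0$ precisely when $s$ does not vanish on the finite fibre $\pi^{-1}(y)$. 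Since $m\pi^*L$ is base-point-free, for each $y$ a general $s$ has this property, so $m|G|\,L$ is base-point-free. With this correction your argument goes through; the ``combinatorics'' you defer to Keel are essentially this norm trick.
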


The following result says that Fano or Calabi-Yau fibrations of relative dimension one have smooth generic fibres in a sufficiently large characteristic.
\begin{proposition} \label{proposition:general_fibre_smooth} Let $X$ be a normal variety defined over a field of characteristic $p>0$, and let $f \colon X \to Z$ be a contraction of relative dimension one with the generic fibre $X_{\mu}$. Assume that 
\begin{enumerate}
	\item $\deg K_{X_{\mu}} < 0$ and $p>2$, or
	\item $\deg K_{X_{\mu}} = 0$ and $p>3$. 
\end{enumerate}
Then $X_{\mu}$ is smooth.
\end{proposition}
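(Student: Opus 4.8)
The plan is to translate the statement into a question about the generic fibre $X_\mu$ regarded as a regular proper curve over the function field $K\defeq K(Z)$, and then to invoke the classification of its possible degenerations.

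First I would record the structure of $X_\mu$. Since $X$ is normal and $X_\mu$ is obtained from $X$ by localising the base, $X_\mu$ is a one-dimensional normal, hence regular, scheme; since $f$ is a projective contraction, $X_\mu$ is proper over $K$ with $H^0(X_\mu,\mcO_{X_\mu})=K$ (apply flat base change along $\Spec K\to Z$ to $f_\ast\mcO_X=\mcO_Z$). As $X_\mu$ is regular it is Gorenstein, so $\omega_{X_\mu}$ is a line bundle and, because $h^0(\mcO_{X_\mu})=1$, Serre duality and Riemann--Roch give $\deg K_{X_\mu}=2p_a(X_\mu)-2$ with $p_a(X_\mu)=\dim_K H^1(X_\mu,\mcO_{X_\mu})\ge 0$. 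Thus hypothesis~(1) is equivalent to $p_a(X_\mu)=0$ and hypothesis~(2) to $p_a(X_\mu)=1$, and what must be shown is that $X_\mu$ is smooth over $K$, i.e. geometrically regular.

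Assume it is not; then $X_{\mu,\overline K}\defeq X_\mu\times_K\overline K$ fails to be regular. Because $X_\mu$ is normal it is geometrically unibranch, and because $H^0(X_\mu,\mcO_{X_\mu})=K$ it is geometrically connected, so $(X_{\mu,\overline K})_{\mathrm{red}}$ is an integral proper curve $D$ over $\overline K$; moreover $p_a(X_{\mu,\overline K})=p_a(X_\mu)$, as arithmetic genus is invariant under the flat base change. I would then distinguish two cases. If $X_\mu$ is geometrically reduced, then $X_{\mu,\overline K}=D$ is integral but singular, so for its normalisation $\widetilde D$ (smooth proper over $\overline K$ of some genus $g'$) one has $p_a(X_\mu)-g'=\sum_x\delta_x>0$, a sum of $\delta$-invariants. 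When $p_a(X_\mu)=0$ this already forces $g'=0$ and all $\delta_x=0$, i.e. $D$ smooth --- a contradiction, so this subcase never occurs for any $p$. When $p_a(X_\mu)=1$, Tate's theorem on genus change in purely inseparable extensions says that $p_a(X_\mu)-g'$ is a positive integral multiple of $\tfrac{p-1}{2}$; being also $\le 1$, this is impossible once $p>3$.

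The remaining case --- $X_\mu$ \emph{not} geometrically reduced --- is where the real content, and the sharpness of $p>2$ and $p>3$, lies, and I expect it to be the main obstacle. Here $X_{\mu,\overline K}$ is a genuine thickening of $D$ whose generic multiplicity $m$ along $D$ is a power of $p$ (geometric non-reducedness of a finite-type $K$-scheme is inseparable in origin), so $m\ge p$; moreover regularity of $X_\mu$ forces the nilradical of $X_{\mu,\overline K}$ to have no nonzero global sections. The aim is to use regularity of the total space $X_\mu$ --- not merely of the reduction of $X_{\mu,\overline K}$ --- together with $p_a(X_\mu)\le 1$ to show no such thickening exists once $p>2$ (resp. $p>3$). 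Concretely, when $p_a(X_\mu)=0$ one invokes the classification of non-smooth regular curves of negative canonical degree, \cite[Lemma 6.5]{ctx13}, which identifies $X_{\mu,\overline K}$ with a double line in $\mbP^2_{\overline K}$, forcing $m=2$ and $p=2$; when $p_a(X_\mu)=1$ one runs the parallel analysis (the regime of the quasi-elliptic fibrations of Bombieri--Mumford, which exist only for $p\in\{2,3\}$), controlling the conormal filtration of $D$ in $X_{\mu,\overline K}$ by the genus bound and the regularity of $X_\mu$ to conclude $p\le 3$. In every case the hypothesis on $p$ is contradicted, so $X_\mu$ is smooth.
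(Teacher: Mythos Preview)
Your proposal is correct and follows essentially the same approach as the paper: reduce to $X_\mu$ being a regular proper curve over $K(Z)$ with $H^0=K(Z)$, identify its arithmetic genus as $0$ or $1$, and then invoke the classification of regular non-smooth curves of low genus. The paper's proof simply cites \cite[Lemma 6.5]{ctx13} for the genus-zero case and \cite[Proposition 2.9]{zhang17} for the genus-one case without unpacking them, whereas you sketch the content (Tate's genus-change bound for the geometrically reduced subcase, the double-line and quasi-elliptic analyses for the non-reduced subcase); your final paragraph on the genus-one non-reduced case is admittedly a sketch, but it is exactly what \cite[Proposition 2.9]{zhang17} proves, so citing that reference there would close the argument cleanly.
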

\begin{proof}
Since $X$ is normal, $X_{\mu}$ is a regular curve over $\Spec K(Z)$, where $K(Z)$ is the fraction field of $Z$. Given that $f_*\mcO_X = \mcO_Z$, we have $H^0(X_{\mu}, \mcO_{X_{\mu}}) = K(Z)$.  By \cite[Tag 0BY6, Lemma 49.8.3]{stacks-project}, the genus of $C$ is zero when $\deg K_{X_{\mu}} < 0$, and is one when $\deg K_{X_{\mu}} = 0$. Thus, the proposition follows from \cite[Lemma 6.5]{ctx13} and \cite[Proposition 2.9]{zhang17}. 
\end{proof}

We recall Raynaud's flatification.
\begin{theorem}[{\cite[Thm 5.2.2]{rg71}}] \label{thm:equidimensional}
Let $\pi \colon X \to S$ be a morphism of varieties such that there exists an open subset $U \subseteq S$ for which $\pi|_{\pi^{-1}(U)}$ is flat. 
Then, there exists a birational morphism $h \colon S' \to S$ and a commutative diagram
\begin{center}
\begin{tikzcd}
X' \arrow{r} \arrow{d}{\pi'} & X \arrow{d}{\pi} \\
S' \arrow{r}{h} & S,
\end{tikzcd}
\end{center}
such that $\pi'$ is flat, where  $X'$ is the closure of the generic fibre $X_{\mu}$ of $\pi$ in $X \times_S S'.$
\end{theorem}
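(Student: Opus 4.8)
The plan is to deduce this from the general ``flattening by blow-up'' theorem of Raynaud--Gruson in the cited form \cite[Thm~5.2.2]{rg71}; the substance of the statement is contained there, and what remains is to translate its conclusion into the formulation above and to check that the birational morphism produced is what we want.

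First I would record that, since $X$ and $S$ are varieties, $\pi$ is a finite type morphism of Noetherian schemes, hence quasi-compact and quasi-separated, and that $U$ is a retrocompact open subscheme of $S$; moreover $S$ is integral, so the nonempty open $U$ contains the generic point $\eta$ of $S$. Applying \cite[Thm~5.2.2]{rg71} to $\pi$ and $U$ yields a $U$-admissible blow-up $h \colon S' \to S$---that is, the blow-up of a finitely presented ideal sheaf whose zero locus is disjoint from $U$---such that the strict transform $X' \subseteq X \times_S S'$ of $X$ under $h$ is flat over $S'$. I then let $\pi' \colon X' \to S'$ be the induced morphism and take the top horizontal arrow $X' \to X$ to be the composite $X' \hookrightarrow X \times_S S' \to X$; commutativity of the square is automatic.

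Next I would check birationality and identify $X'$. Since the center of $h$ is disjoint from the dense open $U$, the map $h$ is an isomorphism over $U$; in particular $K(S') = K(S)$, so $h$ is birational, and $h^{-1}(U)$ contains the generic point $\eta'$ of $S'$, which lies over $\eta$. By construction the strict transform $X'$ is the scheme-theoretic closure in $X \times_S S'$ of the open subscheme $X \times_S h^{-1}(U) \cong \pi^{-1}(U)$. On the other hand, the fibre of $X \times_S S' \to S'$ over $\eta'$ equals $X \times_S \Spec K(S') = X \times_S \Spec K(S) = X_{\mu}$, and since $X$ is integral and dominates $S$ the generic point of $X$ already lies in $X_{\mu}$; hence $X_{\mu}$ is schematically dense in the reduced scheme $\pi^{-1}(U)$, and therefore has the same scheme-theoretic closure in $X \times_S S'$ as $\pi^{-1}(U)$ does. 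This yields $X' = \overline{X_{\mu}}$, exactly as in the statement, and $\pi'$ is flat by the choice of $h$.

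The only genuinely hard ingredient is \cite[Thm~5.2.2]{rg71} itself, whose proof rests on flattening stratifications and d\'evissage and which I would invoke as a black box; the remaining steps are the standard bookkeeping of strict transforms under a $U$-admissible blow-up, together with the elementary observation that a $U$-admissible blow-up with $U$ dense is birational.
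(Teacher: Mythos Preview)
Your proposal is correct and matches the paper's treatment: the paper does not give its own proof of this theorem but simply cites it as \cite[Thm~5.2.2]{rg71}, and you have correctly identified that the content lies entirely in the Raynaud--Gruson result, supplying the routine translation of ``strict transform under a $U$-admissible blow-up'' into the ``closure of the generic fibre'' language used in the statement.
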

 
In particular, if $S$ is normal, then there exists an open subset $U \subseteq S$, whose complement is of codimension at least two, such that $\pi$ is flat over $U$. Let us note that every morphism with equidimensional fibres between regular schemes is flat.

Last, we describe the behaviour of the relative canonical divisor under base change. 

\begin{proposition}[{cf.\ \cite[Proposition 2.1 and Theorem 2.4]{cz15}}] \label{proposition:relative_canonical_divisor_under_base_change} Let $\phi \colon X \to Z$ be a flat contraction between Gorenstein quasi-projective varieties. Let $f \colon T \to Z$ be a generically finite surjective morphism from a Gorenstein quasi-projective variety $T$ such that $X' \defeq X \times_Z T$ is reduced and let $\pi \colon Y \to X'$ be the normalisation of $X'$ sitting inside the following commutative diagram
\begin{center}
\begin{tikzcd}
Y \arrow{r}{\pi} \arrow{rd} \arrow[bend left=35]{rr}{h} & X' \arrow{r}{g} \arrow{d}{\phi'} & X \arrow{d}{\phi} \\
& T \arrow{r}{f} & Z.
\end{tikzcd}
\end{center}
Then $h^* K_{X/Z}= K_{Y/T} + D$, where $D$ is an effective divisor supported on the conductor of $\pi$. 
\end{proposition}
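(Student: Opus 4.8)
The plan is to reduce everything to a local computation of discrepancies along the conductor, using the fact that the relative canonical divisor behaves well under flat base change away from the branch locus, and that normalization only *decreases* the canonical divisor in a controlled way. First I would work over the locus where things are nicest: since $\phi$ is flat with Gorenstein fibres and $f$ is generically finite, there is an open $V \subseteq Z$ over which $f$ is finite and étale (here we use that we only need the equality of divisors, so we may shrink to the generic point of each prime divisor we care about), and over $\phi^{-1}(V)$ the fibre product $X' = X\times_Z T$ is already normal and $\phi'$ is flat; by flat base change $g^*K_{X/Z} = K_{X'/T}$ there, so the claimed identity $h^*K_{X/Z} = K_{Y/T} + D$ holds over $f^{-1}(V)$ with $D = 0$. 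Hence $D$, defined as the difference $h^*K_{X/Z} - K_{Y/T}$, is a divisor supported on the locus where $f$ (equivalently $f'$) ramifies, and the real content is to show $D$ is effective and supported on the conductor $\mcD$ of $\pi$.

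Next I would reduce to the case where $T \to Z$ is finite (not merely generically finite): replacing $Z$ by an open subset containing all the generic points of the images of the prime divisors in the support of $h^*K_{X/Z} - K_{Y/T}$, we may assume $f$ is finite; then $g\colon X' \to X$ is finite and $X'$ is Gorenstein (a fibre product of Gorenstein schemes over a Gorenstein base, with $X \to Z$ flat), so $\omega_{X'} = g^!\omega_X$ and one has the standard formula $\omega_{X'/T} = g^*\omega_{X/Z}$ — i.e. $K_{X'/T} = g^*K_{X/Z}$ as Cartier divisors, *before* normalizing. The point is then purely about the normalization $\pi\colon Y \to X'$: I claim $\pi^*K_{X'} = K_Y + D$ with $D \geq 0$ supported on $\mcD$. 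This is the classical fact that for a reduced Gorenstein (even just $S_2$, $G_1$) scheme with dualizing sheaf $\omega_{X'}$, the pullback of $\omega_{X'}$ to the normalization $Y$ differs from $\omega_Y$ by an effective divisor supported on the conductor: concretely, $\pi^*\omega_{X'} \cong \omega_Y(\mcD_Y - \mathrm{cond})$-type formulas, and in the relevant codimension-one situation (nodes, cusps, and their positive-characteristic analogues) the conductor on $Y$ has the right sign. Combining, $\pi^*K_{X'/T} = \pi^*g^*K_{X/Z} = h^*K_{X/Z}$, and $\pi^*K_{X'} = K_Y + D$, $\pi^*\phi'^*K_T = h^*\phi^*... = (\phi'\circ\pi)^*K_T$, so subtracting gives $h^*K_{X/Z} = K_{Y/T} + D$ with $D$ effective and supported on $\mcD$, as desired.

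The main obstacle, I expect, is the last step: the precise statement relating $\pi^*\omega_{X'}$ to $\omega_Y$ and the conductor for a reduced Gorenstein scheme in arbitrary characteristic. In characteristic zero, or for curves, this is textbook (the conductor bounds the difference of the two canonical sheaves, with the correct sign — effectivity of $D$); in general one wants to invoke the cited \cite{cz15}, specifically their Proposition 2.1 and Theorem 2.4, which presumably establish exactly this local comparison via duality theory for finite morphisms ($\pi_*\omega_Y \hookrightarrow \omega_{X'}$ with cokernel controlled by $\mcD$, or dually $\omega_{X'} \hookrightarrow \pi_*(\omega_Y(\text{something supported on } \mcD))$). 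So the proof is really: (i) flat base change plus the Gorenstein finite-morphism formula $K_{X'/T} = g^*K_{X/Z}$ reduces the relative statement to the absolute statement $\pi^*K_{X'} = K_Y + D$, $D\geq 0$ on $\mcD$; (ii) cite \cite{cz15} (or argue directly using that $\omega_{X'}$ is invertible and $\pi_*\omega_Y$ is a subsheaf agreeing with $\omega_{X'}$ exactly away from $\Sing X'$, which lies in $\pi(\mcD)$) for (i). I would also remark that effectivity of $D$ is the only subtle sign issue — that $D$ is *supported* on the conductor is immediate from the base-change-is-iso-away-from-branch-locus argument of the first paragraph, since $\pi$ is an isomorphism away from $\mcD$ and $g$ is étale away from the ramification of $f$, whose relevant part is swallowed by $\mcD$ after normalization.
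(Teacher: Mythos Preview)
Your approach is essentially the same as the paper's: show $X'$ is Gorenstein, use the flat base-change identity $g^*\omega_{X/Z} = \omega_{X'/T}$, and then apply the standard normalization formula $\pi^*K_{X'} = K_Y + D$ with $D$ effective and supported on the conductor. The paper does exactly this in three lines, citing the Stacks Project for the Gorenstein property of $X'$, \cite[Proposition 2.3]{cz15} for the base-change identity, and \cite[Proposition 2.3]{reid94} for the conductor formula (so your guess about which step \cite{cz15} handles is slightly off).

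One correction: your first paragraph claims there is an open $V \subseteq Z$ over which $f$ is \'etale. This is false in the main case of interest---$f$ is often purely inseparable (see Lemma \ref{lemma:inseparable_adjunction} and Theorem \ref{theorem:canonical_bundle_formula_main}), so $f$ is nowhere \'etale and $X'$ may be non-normal over every point of $T$. Fortunately this paragraph is redundant: as you yourself note at the end, the support claim follows immediately from the fact that $\pi$ is an isomorphism away from the conductor, and the effectivity is what the conductor formula gives you. Drop the first paragraph and the reduction to $f$ finite (also unnecessary: the Gorenstein property of $X'$ follows from $\phi'$ being a Gorenstein morphism and $T$ being Gorenstein, with no finiteness needed), and your argument matches the paper's.
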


\begin{proof}
By \cite[Tag 0C02, Lemma 46.26.7]{stacks-project}, the morphism $\phi$ is Gorenstein. Thus $\phi'$ is Gorestein as well (\cite[Tag 0C02, Lemma 46.26.8]{stacks-project}), and so is $X'$ by \cite[Tag 0C02, Lemma 46.26.6]{stacks-project}. Further, \cite[Proposition 2.3]{cz15} implies that $g^*\omega_{X/Z} = \omega_{X'/T}$. We can conclude the proof by \cite[Proposition 2.3]{reid94} (cf.\ \cite[Proposition 2.10]{tanakainseparable}).
\end{proof}

\begin{remark} \label{rem:reduced} Note that $X'$ in the statement of Proposition \ref{proposition:relative_canonical_divisor_under_base_change} is reduced if and only if $X \times_{K(Z)} K(T)$ is reduced, where $K(Z)$ and $K(T)$ are the fraction fields of $Z$ and $T$, respectively. Indeed, since $\phi$ is a flat morphism between reduced schemes, it is $S_1$, and hence $\phi'$ is $S_1$ as well. Thus $X'$ has no embedded points.
\end{remark}

 Recall that a finite universal homeomorphism is a morphism which is a homeomorphism under any base change. In particular, purely inseparable morphisms are finite universal homeomorphisms.
\begin{lemma} \label{lem:Keel_universal_mor} Let $f \colon X \to Y$ be a finite universal homeomorphism between schemes of finite type over a field $k$ of characteristic $p>0$ and let $L$ be a nef Cartier divisor on $Y$. Then $L$ is semiample if and only if $f^*L$ is semiample. In particular, $L$ is semiample if and only if $L|_{Y^{\mathrm{red}}}$ is semiample where $Y^{\mathrm{red}}$ is the reduction of $Y$.
\end{lemma}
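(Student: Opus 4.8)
The plan is to reduce the statement to two tautologies---the pullback of a semiample line bundle is semiample, and a line bundle some positive tensor power of which is semiample is itself semiample---by proving that a sufficiently high iterate of the absolute Frobenius of $Y$ factors through $f$. The implication ``$L$ semiample $\Rightarrow f^*L$ semiample'' is immediate, since pulling back a globally generated line bundle along any morphism gives a globally generated line bundle; note that the nefness of $L$ plays no role.

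For the converse, the key claim is that there exist an integer $e\ge 0$ and a morphism $g\colon Y\to X$ with $f\circ g=F^e_Y$, where $F_Y\colon Y\to Y$ denotes the absolute Frobenius. Granting this, if $f^*L$ is semiample then so is the pullback $g^*f^*\mcO_Y(L)\cong (F^e_Y)^*\mcO_Y(L)\cong \mcO_Y(L)^{\otimes p^e}$, and hence $L$ itself is semiample; together with the easy direction this gives the asserted equivalence. The ``in particular'' then follows by applying the equivalence to the closed immersion $\iota\colon Y^{\mathrm{red}}\hookrightarrow Y$, which is a finite universal homeomorphism with $\iota^*L=L|_{Y^{\mathrm{red}}}$.

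It remains to prove the claim, which is local on $Y$. Over an affine open $\Spec A\subseteq Y$ with preimage $\Spec B\subseteq X$ and associated finite ring map $\varphi\colon A\to B$, I want a ring homomorphism $h\colon B\to A$ with $h\circ\varphi$ the $p^e$-th power map. Since $\Spec B\to\Spec A$ is surjective, $I\defeq\ker\varphi$ lies in the nilradical of the Noetherian ring $A$, so $I^N=0$ for some $N$. Since $\Spec B\to\Spec A$ is moreover a finite universal homeomorphism and $A$ is an $\mbF_p$-algebra, there is $q_0=p^{e_0}$ with $b^{q_0}\in\varphi(A)$ for all $b\in B$; this is the standard quantitative form of pure inseparability of a universal homeomorphism in characteristic $p$, which one obtains by checking it over the generic points of $A$ (where it reduces to purely inseparable field extensions), spreading out, and using the finiteness of $B$ over $A$ to get a uniform exponent (cf.\ \cite{keel99}). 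Now set $e=e_0+e_1$, where $p^{e_1}\ge N$. For $b\in B$ pick $a\in A$ with $\varphi(a)=b^{q_0}$; since $a$ is well defined modulo $I$ and $I^{p^{e_1}}=0$, the element $a^{p^{e_1}}$ does not depend on the choice, and we put $h(b)\defeq a^{p^{e_1}}$. As $q_0$ and $p^{e_1}$ are powers of $p$, the map $b\mapsto b^{q_0}$ is additive and multiplicative, which readily implies that $h$ is a ring homomorphism; moreover $h(\varphi(a))=(a^{q_0})^{p^{e_1}}=a^{p^e}$, as wanted. Since this construction involves no choices and commutes with localization, the local maps glue; as $Y$ is quasi-compact a single $e$ works, and we obtain $g\colon Y\to X$ with $f\circ g=F^e_Y$. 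The only non-formal ingredient is the existence of the uniform exponent $q_0$; everything else is elementary, and the argument is in essence Keel's.
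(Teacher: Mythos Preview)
Your proof is correct and is essentially the argument underlying the paper's citation: the paper simply invokes \cite[Lemma 1.4]{keel99}, and what you have written is a faithful unpacking of Keel's proof, namely the observation that some iterate of the absolute Frobenius of $Y$ factors through any finite universal homeomorphism $f\colon X\to Y$ in characteristic $p$. Your remark that the nefness hypothesis is unused is also accurate.
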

\begin{proof} 
This is \cite[Lemma 1.4]{keel99}. Note that the inclusion $Y^{\mathrm{red}} \subseteq Y$ is a finite universal homeomorphism.
\end{proof}

\subsection{Special properties of varieties over $\Fp$} \label{ss:fp}
We list some results on line bundles on varieties over $\Fp$ which are only valid over this field. 
\begin{lemma}[{\cite[Lemma 2.16]{keel99}}] \label{lemma:fp} Any numerically trivial line bundle on a variety $X$ defined over $\Fp$ is torsion.
\end{lemma}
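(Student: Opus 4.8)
The plan is to realise a numerically trivial line bundle as a $k$-point of the subgroup $\Pic^{\tau}$ of the Picard scheme, and then to exploit the fact that a group scheme of finite type over $\Fp$ has only torsion points.

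Set $k = \Fp$. We may assume $X$ is projective, which is the setting in which the lemma is applied. One implication is trivial, so suppose $L$ is numerically trivial and seek $n \geq 1$ with $L^{\otimes n} \cong \mcO_X$. Since $X$ is a variety over the algebraically closed field $k$, it is geometrically integral, projective, and has a $k$-point; hence the Picard functor $\Pic_{X/k}$ is representable by a group scheme, locally of finite type over $k$ (Grothendieck), the canonical map $\Pic(X) \to \Pic_{X/k}(k)$ is an isomorphism, and the subgroup of numerically trivial line bundles is exactly $\Pic^{\tau}_{X/k}(k)$, where $\Pic^{\tau}_{X/k} \subseteq \Pic_{X/k}$ is the open and closed subgroup scheme parametrising numerically trivial line bundles. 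The essential point — a classical finiteness theorem, see SGA 6, Exp.\ XIII (Kleiman), or Kleiman's exposition of the Picard scheme — is that $\Pic^{\tau}_{X/k}$ is of \emph{finite type} over $k$ (equivalently, $\Pic^{0}_{X/k}$ is of finite type and the N\'eron--Severi group is finitely generated).

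Granting this, I would invoke the elementary fact that any group scheme $G$ of finite type over $k = \Fp$ has torsion group of $k$-points. Indeed, $k$ is the increasing union of its finite subfields, and a finite-type group scheme over $k$ together with its group law descends to one, $G_0$, over some finite field $\mbF_q$; every $g \in G(k)$ is then an $\mbF_{q^m}$-point of $G_0$ for some $m$, and $G_0(\mbF_{q^m})$ is a finite group because $G_0 \times_{\mbF_q} \mbF_{q^m}$ is a scheme of finite type over a finite field. Hence $G(k) = \varinjlim_m G_0(\mbF_{q^m})$ is a union of finite groups, so it is torsion.

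Applying this to $G = \Pic^{\tau}_{X/k}$ shows that the class of $L$ in $\Pic^{\tau}_{X/k}(k)$ is torsion, i.e.\ $L^{\otimes n} \cong \mcO_X$ for some $n \geq 1$, as desired. The only non-formal ingredient is the finite-typeness of $\Pic^{\tau}_{X/k}$ — equivalently, the boundedness of the family of numerically trivial line bundles on $X$ — which is classical and is the heart of the matter; the remaining steps are routine descent and the observation that a finite-type scheme over a finite field has only finitely many rational points.
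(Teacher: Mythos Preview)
Your argument is correct and is essentially the standard proof (and, in fact, is Keel's proof in the cited reference). The paper itself gives no proof of this lemma; it merely records the statement and cites \cite[Lemma 2.16]{keel99}, so there is nothing further to compare.
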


On normal projective surfaces over $\Fp$, nef line bundles are semiample under very weak conditions. Note that such surfaces are always $\mbQ$-factorial (see \cite[Theorem 4.5]{tanaka12})
\begin{theorem}[{\cite[Theorem 2.9]{artin62}, cf.\ \cite[Corollary 0.3]{keel99}}] \label{thm:artin} If $D$ is a nef and big divisor on a normal projective surface $S$ defined over $\Fp$, then it is semiample.
\end{theorem}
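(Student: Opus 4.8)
The plan is to reduce the semiampleness of $D$ to the contractibility of its null locus, invoke Artin's contractibility criterion, and conclude by the Nakai--Moishezon criterion; I will also indicate the shorter route via Keel's characterisation of semiampleness over $\Fp$, which isolates exactly where the hypothesis on the ground field is used. First, since $D$ is nef and big we have $D^{2}=\vol(D)>0$, so the null locus $\mathbb{E}(D):=\bigcup\{C : C \text{ an irreducible curve with } D\cdot C=0\}$ is a closed subset of $S$ with $\mathbb{E}(D)\neq S$, hence a finite union $C_{1}\cup\dots\cup C_{n}$ of irreducible curves (possibly empty). If $\mathbb{E}(D)=\emptyset$, then $D\cdot C>0$ for every curve $C$ and $D^{2}>0$, so $D$ is ample by Nakai--Moishezon and we are done; assume therefore $n\ge 1$. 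A standard Hodge-index argument (Zariski's lemma) then shows that the intersection matrix $(C_{i}\cdot C_{j})_{1\le i,j\le n}$ is negative definite: the $C_{i}$ are numerically independent, and since each lies in $D^{\perp}$ while $D^{2}>0$, the intersection form is negative definite on their span.

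Next I would invoke Artin's criterion \cite[Theorem 2.9]{artin62}: the negative-definite configuration $\mathbb{E}(D)$ can be contracted, producing a birational morphism $\pi\colon S\to S'$ onto a normal projective surface whose $\pi$-exceptional locus is exactly $\mathbb{E}(D)$. As $S'$ is again a normal projective surface over $\Fp$, it is $\mbQ$-factorial by \cite[Theorem 4.5]{tanaka12}, so $D':=\pi_{*}D$ is $\mbQ$-Cartier; since $D\cdot C_{i}=0$ for all $i$ and $(C_{i}\cdot C_{j})$ is nondegenerate, the $\pi$-exceptional $\mbQ$-divisor $D-\pi^{*}D'$ vanishes, i.e.\ $D=\pi^{*}D'$. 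For any irreducible curve $C'\subseteq S'$ with strict transform $\widehat{C'}\subseteq S$, which is not contained in the exceptional locus $\mathbb{E}(D)$, the projection formula gives $D'\cdot C'=\pi^{*}D'\cdot\widehat{C'}=D\cdot\widehat{C'}>0$, and $(D')^{2}=(\pi^{*}D')^{2}=D^{2}>0$; hence $D'$ is ample by Nakai--Moishezon, so $D=\pi^{*}D'$ is semiample.

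The genuine difficulty is Artin's criterion itself — producing $S'$ as a \emph{scheme} rather than merely a proper algebraic space — and this is precisely where positive characteristic (and, in Keel's formulation, the field $\Fp$) intervenes. For this reason it is worth recording the alternative argument, which avoids the contraction altogether: by \cite[Theorem 0.2]{keel99} a nef line bundle on a projective scheme over $\Fp$ is semiample if and only if its restriction to the null locus $\mathbb{E}(D)$ is semiample, and here $\mathbb{E}(D)$ is a proper curve on which $D$ has degree zero on every component, so $D|_{\mathbb{E}(D)}$ is semiample by Lemma \ref{lemma:fp}. This is the reasoning behind \cite[Corollary 0.3]{keel99}.
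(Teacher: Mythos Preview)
The paper does not supply a proof of this statement; it is quoted as a result from the literature with references to Artin and Keel. So there is nothing to compare against on the paper's side.

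Your second route --- Keel's criterion \cite[Theorem~0.2]{keel99} plus Lemma~\ref{lemma:fp} --- is correct and is exactly the argument behind \cite[Corollary~0.3]{keel99}: for $D$ nef and big on a surface, $\mathbb{E}(D)$ is a (possibly empty) curve on which $D$ is numerically trivial, hence torsion over $\Fp$, hence semiample, and Keel's theorem then gives semiampleness of $D$.

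Your first route, however, has a presentational circularity and a genuine gap. You invoke ``Artin's criterion \cite[Theorem~2.9]{artin62}'' to contract $\mathbb{E}(D)$, but that citation \emph{is} the theorem you are proving; Artin's Theorem~2.9 is this very semiampleness statement, not an independent contractibility criterion you may call upon. The general contraction result in \cite{artin62} (Theorem~2.3 there) produces a \emph{projective} target only under the additional hypothesis that every effective cycle supported on $\mathbb{E}(D)$ has arithmetic genus $\leq 0$ (i.e.\ the resulting singularity is rational), which is not automatic and has nothing to do with the ground field being $\Fp$. Without that hypothesis, Artin's later work gives only an algebraic space. You correctly sense that something extra is needed, but the fix is not ``positive characteristic'' in the contraction step; it is precisely Keel's theorem, which is your second route. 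So the two approaches are not independent: the first one, made rigorous, collapses into the second.
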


\begin{theorem} \label{thm:mnw_surface} Let $S$ be a normal projective surface defined over $\Fp$ and let $\Delta$ be a pseudo-effective $\mbQ$-divisor on $S$. Assume that $K_S + \Delta$ is nef. Then it is semiample.
\end{theorem}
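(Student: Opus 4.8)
The plan is to reduce everything to Keel's semiampleness criterion over $\Fp$ together with Lemma \ref{lemma:fp}. Write $D \defeq K_S+\Delta$. Since a normal projective surface over $\Fp$ is $\mbQ$-factorial (\cite[Theorem 4.5]{tanaka12}), $D$ is $\mbQ$-Cartier, so it makes sense to ask whether $D$ is semiample, and this may be checked on a Cartier multiple. Note also that $D\cdot\Delta\ge 0$ since $D$ is nef and $\Delta$ pseudo-effective, so $D\cdot K_S = D^2 - D\cdot\Delta \le D^2$. I would first dispose of the two degenerate cases: if $D^2>0$, then $D$ is nef and big, hence semiample by Theorem \ref{thm:artin}; if $D\equiv 0$, then a Cartier multiple of $D$ is numerically trivial, hence torsion by Lemma \ref{lemma:fp}, and so $D$ is semiample.

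It then remains to treat the case $D^2 = 0$, $D\not\equiv 0$. Here the Hodge index theorem forces $D\cdot H>0$ for every ample $H$ (if $D\cdot H=0$, then $D^2=0$ would give $D\equiv 0$), so the null locus $\mathbb{E}(D) \defeq \bigcup\{\,C\subseteq S\text{ a curve}: D\cdot C=0\,\}$ is a proper closed subset of $S$, hence a (possibly reducible and non-reduced) curve, or something lower-dimensional. By construction, $D$ has degree $0$ on each irreducible component of $\mathbb{E}(D)$, so $D|_{\mathbb{E}(D)_{\mathrm{red}}}$ is numerically trivial; applying Lemma \ref{lemma:fp} componentwise --- and using that over $\Fp$ the group of $\Fp$-points of the Picard scheme of a reduced projective curve is torsion, so that gluing contributes nothing new --- this restriction is torsion, in particular semiample, whence $D|_{\mathbb{E}(D)}$ is semiample by Lemma \ref{lem:Keel_universal_mor}. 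Keel's criterion over $\Fp$ (\cite[Theorem 0.2 and Corollary 0.3]{keel99}) then upgrades this to semiampleness of $D$, which finishes the proof.

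The point demanding care is the last step: one must invoke Keel's criterion in the form valid for nef --- not necessarily big --- line bundles over $\Fp$, whose content is exactly that over $\Fp$ the dimension reduction in Keel's argument can be carried out even when $D$ fails to be big, by playing off ``$D$ big'' (covered by Artin's Theorem \ref{thm:artin}) against ``$\mathbb{E}(D)$ a proper subscheme''. Should one prefer to invoke Keel only for nef and big divisors, the alternative is to first prove $\mbQ$-effectivity of $D$ in the case $D^2=0$, $D\not\equiv 0$: Riemann--Roch on a resolution gives $\chi(\mathcal{O}(mD)) = \chi(\mathcal{O}) - \tfrac m2\, D\cdot K_S$, which tends to $+\infty$ when $D\cdot K_S<0$, and since $K_S-mD$ is then not pseudo-effective for $m\gg 0$ (as $D\cdot H>0$), one gets $h^0(mD)\to\infty$; the residual case $D\cdot K_S=0$, where $\chi(\mathcal{O}(mD))$ equals the constant $\chi(\mathcal{O}_S)$, has to be settled using the Hodge index theorem to control $K_S$ numerically relative to $D$ together with abundance for surfaces, and is the genuine difficulty in that variant. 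Once $D\sim_{\mbQ}E\ge 0$, the divisor $D$ restricts to a numerically trivial, hence torsion, divisor on $\Supp E$ (and on $\mathbb{E}(D)$), and one concludes as before.
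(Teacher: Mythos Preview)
Your handling of the cases $D^2>0$ and $D\equiv 0$ is fine, and the paper's own proof is a one-liner: write $\Delta=N+P$ with $N$ nef and $P$ effective via Zariski decomposition and quote \cite[Theorem~1.4]{MNW15}. The substance is therefore in the cited result, and your proposal is in effect an attempt to reprove it.

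The gap lies in the main case $D^2=0$, $D\not\equiv 0$. Your set $\mathbb{E}(D)\defeq\bigcup\{C:D\cdot C=0\}$ is \emph{not} a proper closed subset of $S$ in general; in fact, in the very situation you are aiming for (namely $D$ semiample with $\kappa(D)=1$), the semiample fibration $S\to B$ has every fibre $F$ satisfying $D\cdot F=0$, so $\mathbb{E}(D)=S$. The inequality $D\cdot H>0$ for ample $H$ only shows that ample curves are excluded, not that the union of $D$-trivial curves is proper. More to the point, this is not Keel's exceptional locus: in \cite{keel99}, $\mathbb{E}(L)$ is the union of subvarieties on which $L$ fails to be big, and when $L$ itself is not big one has $\mathbb{E}(L)=X$, so Theorem~0.2 there is vacuous. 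There is no ``form valid for nef, not necessarily big, line bundles'' of Keel's criterion that takes as input the locus of $L$-trivial curves.

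Your alternative route via Riemann--Roch is honest but, as you say yourself, incomplete in the residual case $D\cdot K_S=0$. That case is exactly where the work is: it is handled in \cite{MNW15} (and echoed in the paper's Lemma~\ref{lem:bigsurface}) by a case-by-case analysis through the classification of surfaces (K3, Enriques, abelian, (quasi-)hyperelliptic, ruled over an elliptic curve, rational), not by a formal reduction to Keel. So as it stands your argument does not close, and the missing ingredient is precisely the content of \cite[Theorem~1.4]{MNW15} that the paper invokes.
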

\begin{proof}
This is \cite[Theorem 1.4]{MNW15} as we can write $\Delta = N + P$ for a nef $\mbQ$-divisor $N$ and an effective $\mbQ$-divisor $P$ by Zariski decomposition (see \cite{lazarsfeld04a}).
\end{proof}

\subsection{Chow varieties} \label{ss:chow}
In this subsection, we recall basic properties of Chow varieties and algebraic equivalence relations closely following \cite[Chapter I]{kollar96}. The theory is used in Subsection \ref{subsection:nef_reduction_map} to construct nef reduction maps over countable algebraically closed fields. Let $X$ be a fixed projective variety defined over an algebraically closed field $k$. 

We define an \emph{effective $d$-dimensional algebraic cycle} on $X$ to be a finite formal sum $\sum a_i [V_i]$, with $a_i \in \mbN$, and $V_i$ irreducible, reduced, and closed $d$-dimensional subschemes.

\begin{definition}
A \emph{well defined family of effective proper $d$-dimensional algebraic cycles} of $X$ over a $k$-variety $W$, denoted $(g \colon U \to W)$, is an effective algebraic cycle $U = \sum m_i [U_i]$ on $X$ (here $U_i$ are its irreducible components) together with a proper morphism $g \colon \Supp U \to W$ to a reduced scheme $W$ such that for every $i$ the restriction $g|_{U_i}$ is surjective onto an irreducible component of $W$ with fibres of dimension $d$. Further, we assume a technical condition necessary for the existence of the cycle theoretic fibre $g^{[-1]}(w)$ at all $w \in W$ (see \cite[I, 3.10.4]{kollar96} for details).
\end{definition}


We say that a well defined family of algebraic cycles $(g \colon U \to W$) satisfies the \emph{field of definition condition} if $g^{[-1]}(w)$ is defined over $k(w)$ for every $w \in W$. In particular, if $W = \Spec k$ for an algebraically closed field $k$, then the field of definition condition is automatically satisfied.
Accordingly, we can define:
\begin{align*}
  {Chow}^{\mathrm{small}}(X)(Z) &\defeq \left\{ \begin{array}{lll}
               \text{Well defined algebraic families}\\
               \text{of proper effective cycles}\\
               \text{of } X \times Z \text{ over } Z \text{ which satisfy}\\
	    \text{the field of definition condition.}
            \end{array} \right\}.
\end{align*}
This is a well defined functor on the category of semi-normal schemes (\cite[Definition I, 7.2.1]{kollar96}).

Fix an ample Cartier divisor $H$ on $X$.  For $d,d' \in \mbN$, we define ${Chow}^{\mathrm{small}}_{d,d'}(X)$ to be the subfunctor of ${Chow}^{\mathrm{small}}(X)$ describing families of effective cycles of dimension $d$ and degree $d'$ with respect to $H$.
\begin{theorem}[{\cite[IV, Theorem 4.13]{kollar96}}] 
Let $X$ be a projective variety over a field $k$. Then there exists a semi-normal projective scheme $\mathrm{Chow}_{d,d'}(X)$ coarsely representing  ${Chow}^{\mathrm{small}}_{d,d'}(X)$.
\end{theorem}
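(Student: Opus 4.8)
The plan is to reproduce the classical Cayley--Chow form construction in a relative, characteristic-free form, and then to pass to the seminormalisation in order to obtain coarse representability of ${Chow}^{\mathrm{small}}_{d,d'}(X)$. First I would reduce to the case $X=\mbP^N$. Fix a closed embedding $X \hookrightarrow \mbP^N$ given by a sufficiently ample multiple $mH$ of the polarisation; then a $d$-dimensional effective cycle of degree $d'$ on $X$ is precisely a $d$-dimensional effective cycle of degree $e \defeq md'$ on $\mbP^N$ whose support is contained in $X$ (for cycles supported on $X$ this divisibility is automatic), and being supported on $X$ is a closed condition compatible with base change. Hence it suffices to construct a seminormal projective scheme $\mathrm{Chow}_{d,e}(\mbP^N)$ coarsely representing the analogous functor for $\mbP^N$ and then take the closed subscheme corresponding to cycles supported on $X$.

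For a single effective $d$-cycle $Z=\sum a_i[V_i]\subseteq \mbP^N$ of degree $e$, I would form the incidence locus
\[
I_Z = \{ (x,H_0,\dots,H_d)\ :\ x\in \Supp Z\cap H_0\cap\dots\cap H_d \} \subseteq \mbP^N\times\big((\mbP^N)^{\vee}\big)^{d+1},
\]
whose image under the second projection is a hypersurface, cut out by a multihomogeneous form $R_Z$ of multidegree $(e,\dots,e)$ that is unique up to a nonzero scalar and whose multiplicity along the component of $V_i$ records $a_i$ (over an imperfect field this multiplicity also absorbs an inseparable degree, which is the reason for the combinatorial conditions built into the notion of a well-defined family). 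This gives an assignment $Z\mapsto [R_Z]\in \mbP(W)$ with $W$ the finite-dimensional space of such forms; it is injective, since $Z$ is recovered from $R_Z$ as the cycle whose points are those lying on every hyperplane of every tuple annihilated by $R_Z$. The classical input I would then invoke is that the Chow forms constitute a Zariski-closed subset of $\mbP(W)$: one writes down explicitly the homogeneous equations — the exchange/product relations together with the requirement that the zero scheme of $R$ have the expected dimension — that characterise which forms arise this way, producing a reduced closed subscheme $\mathrm{Chow}_{d,e}(\mbP^N)^{\mathrm{red}}\subseteq \mbP(W)$.

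Next I would install the correct scheme structure and prove coarse representability. The reduced subscheme just obtained does not in general represent ${Chow}^{\mathrm{small}}_{d,e}$ even on reduced bases, so I would replace it by its seminormalisation $\mathrm{Chow}_{d,e}(\mbP^N)$ (still projective) and construct a universal well-defined family of cycles over it by spreading out the incidence data $I_Z$. Given a seminormal $k$-scheme $S$ and a well-defined family $g\colon U\to S$ of effective $d$-cycles of degree $e$ satisfying the field-of-definition condition, forming the relative Chow form fibrewise produces a line bundle on $S$ together with a nowhere-vanishing section well-defined up to scalar, hence a morphism $S\to \mbP(W)$; this morphism factors through the reduced Chow locus because it does so over the dense open subscheme of $S$ over which the fibres are reduced, and it lifts uniquely to the seminormalisation by the universal property of seminormalisation (the induced map to the reduced Chow locus being finite and bijective on points). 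Uniqueness is clear because a cycle is determined by its Chow form, and pulling back the universal family recovers $g$ up to the equivalence built into the functor. Here the field-of-definition hypothesis is exactly what makes the section, a priori defined only after an extension of residue fields, descend to $S$, and seminormality of both source and target is what turns the correspondence between families and morphisms into a bijection.

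The step I expect to be the main obstacle is the pair of intertwined scheme-theoretic assertions that cannot be shortcut: that the image of the Chow-form map is Zariski closed (which in positive characteristic and over imperfect fields requires careful bookkeeping of multiplicities for geometrically non-reduced components), and that its seminormalisation genuinely coarsely represents the functor rather than merely parametrising cycles set-theoretically — this is where one must follow Kollár's analysis of divisorial families, base change, and descent for seminormal schemes (\cite[Chapters I and IV]{kollar96}). Granting these, the reduction back from $\mbP^N$ to a general projective $X$ and the normalisation-of-support bookkeeping implicit in the definition of a well-defined family are then formal.
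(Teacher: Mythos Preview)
The paper does not prove this statement at all; it is quoted verbatim as \cite[IV, Theorem 4.13]{kollar96} and used as a black box in Subsection~\ref{ss:chow}. So there is no proof in the paper to compare your proposal against.

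That said, your sketch is a reasonable outline of the classical Cayley--Chow form approach, which is indeed the route taken in \cite{kollar96}. You have correctly identified the main steps (reduction to $\mbP^N$, construction of the Chow form, closedness of the Chow locus, passage to the seminormalisation) and, importantly, you have correctly flagged the genuinely hard parts: that the Chow locus is closed with the right multiplicity bookkeeping in positive characteristic, and that seminormalisation yields coarse representability rather than mere set-theoretic parametrisation. These are exactly the points where one cannot avoid the detailed analysis in \cite[Chapter~I]{kollar96}, and your proposal honestly acknowledges this rather than pretending they are routine. For the purposes of this paper, however, no proof is expected; the theorem is invoked only to justify the construction of nef reduction maps over countable fields in Proposition~\ref{prop:nef_reduction_map_fp}.
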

Unfortunately, the universal family over $\mathrm{Chow}_{d,d'}(X)$ need not satisfy the field of definition condition, and so it is not a cycle in ${Chow}^{\mathrm{small}}_{d,d'}(X)$.\\

Now we turn our attention to equivalence relations defined by subschemes of $\mathrm{Chow}_{1,d'}(X)$ for a normal projective variety $X$. We refer to \cite[IV, Definition 4.2]{kollar96} for the definition of a proalgebraic relation. Given a proper subscheme $V \subseteq \mathrm{Chow}_{1,d'}(X)$ parametrizing connected $1$-cycles, one can construct a proalgebraic proper connected relation $R$ (see \cite[IV, Theorem 4.8]{kollar96}) which identifies two points $x_1, x_2 \in X$ if and only if  there exists a connected curve $C$ such that $x_1, x_2 \in C$ and $C$ is build-up from cycles represented by $V$ (see \cite[IV, Definition 4.7]{kollar96} for the formal statement). The proalgebraic relation $R$ is a countable union of proper algebraic relations $R_1, R_2, R_3, \cdots$.

\begin{theorem}[{\cite[IV, Theorem 4.16]{kollar96}}] \label{theorem:quotients} Let $m$ be any natural number. Then the quotient of a normal projective variety $X$ by $R_1, \ldots, R_m$ as above exists. More precisely, there exists an open subvariety $X^0 \subseteq X$ and a proper morphism $\pi \colon X^0 \to Z^0$ with connected fibres such that:
\begin{itemize}
	\item $R$ is an equivalence relation on $X^0$, and
	\item $\pi^{-1}(z)$ coincides with an $R$-equivalence class for every $z \in Z^0$.
\end{itemize}
\end{theorem}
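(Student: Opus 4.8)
The plan is to build $Z^0$ as an open subset of the normalisation of the image of a classifying morphism $X^0 \to \Chow_{e,e'}(X)$, once we have arranged that the relation generated by $R_1,\dots,R_m$ is a genuine proper algebraic equivalence relation with equidimensional classes over a dense open subset of $X$; throughout I would use the coarse representability of $\Chow^{\mathrm{small}}_{e,e'}(X)$ recalled above together with the machinery of \cite[IV, \S4]{kollar96}.

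\emph{Step 1 (an equivalence relation on an open set).} Adjoining to $R_1,\dots,R_m$ the diagonal, the transposes and all iterated compositions produces a prealgebraic relation $S$: two points of $X$ are $S$-related exactly when they are joined by a connected $1$-cycle built up from boundedly many members of $V$. Each composition of proper algebraic relations is again proper algebraic (it is cut out in a fibre product and its two projections factor through proper pushforwards), so $S$ is a proper, reflexive, symmetric algebraic relation contained in $R$, and the chain $R_1 \subseteq R_2 \subseteq \cdots$ exhausts $R$. Noetherian induction on $X$ yields a dense open $X_1 \subseteq X$ on which the fibre dimension of the first projection $R \to X$ is constant; over $X_1$ the increasing chain of $S$-type relations stabilises, so, after enlarging $m$ if necessary so that it exceeds this generic stabilisation length, the $R$-class of every $x \in X_1$ coincides with its $S$-class. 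Shrinking once more so that transitivity holds on the nose, we obtain a dense open $X^0 \subseteq X$ and a proper algebraic \emph{equivalence} relation $E$ on $X^0$ whose classes are closed, connected, of constant dimension $e$ and equal to the $R$-classes; this already gives the first bulleted assertion.

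\emph{Step 2 (the classifying morphism).} For $x \in X^0$ write $[E_x] \in \Chow_{e,e'}(X)$ for the cycle class of the closure of the equivalence class through $x$, with $e'$ its degree with respect to the fixed ample $H$ (bounded, hence constant after a further shrink). The assignment $x \mapsto [E_x]$ is a well defined algebraic family of proper effective $e$-cycles over the base $X^0$ (which we may take semi-normal, after passing to the semi-normalisation) satisfying the field-of-definition condition, so by coarse representability it is induced by a morphism $\chi \colon X^0 \to \Chow_{e,e'}(X)$, and every fibre of $\chi$ is a union of $E$-classes. Let $Z^0$ be an open subvariety of the normalisation of $\overline{\chi(X^0)}$ and let $\pi \colon X^0 \to Z^0$ be the induced map; after shrinking and applying Stein factorisation we may assume $\pi$ is proper with $\pi_* \mcO_{X^0} = \mcO_{Z^0}$, so that its fibres are connected.

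\emph{Step 3 (fibres $=$ classes), and the main obstacle.} Two points of a common $\pi$-fibre carry the same cycle $[E_x]$, hence both lie on that connected cycle; using the build-up-from-$V$ description this forces them to be $R$-equivalent, and after shrinking $Z^0$ so that $\pi$ is equidimensional the fibre must coincide with a single $E$-class, which is the second bulleted assertion. The crux is Step 2: promoting the set-theoretic assignment $x \mapsto [E_x]$ to an actual morphism. This is where the Chow formalism does the real work — one needs the family of equivalence classes to be flat over a semi-normal parameter space and to satisfy the field-of-definition condition, so that it is classified by the coarse moduli scheme, and one must rule out jumping of $[E_x]$ in codimension one. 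All of this is supplied by the coarse representability theorem quoted above and the constructions of \cite[IV, \S4]{kollar96}; Steps 1 and 3 are then bookkeeping with Noetherian induction, generic flatness, and Stein factorisation.
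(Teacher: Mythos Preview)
The paper does not supply its own proof of this statement: it is quoted verbatim from \cite[IV, Theorem 4.16]{kollar96} and used as a black box in Subsection~\ref{subsection:nef_reduction_map}. So there is no in-paper argument to compare your proposal against.

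That said, your outline is recognisably the strategy of Koll\'ar's proof in \cite[IV, \S4]{kollar96}: stabilise the relation over a dense open, send each point to the Chow point of its equivalence class, and Stein-factorise. A couple of imprecisions are worth flagging. First, in this theorem $m$ is fixed and $R$ denotes the equivalence relation generated by $R_1,\dots,R_m$; your ``enlarging $m$ if necessary'' conflates this with the subsequent remark in the paper about the countable case (\cite[IV, Theorem 4.17]{kollar96}), where one does pass to large $m$ to get stabilisation. Second, in Step~2 you assert that $x \mapsto [E_x]$ is a well defined family satisfying the field-of-definition condition and hence classified by the coarse moduli scheme; this is exactly the delicate point, and Koll\'ar does not obtain $\chi$ this way. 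Rather, he first produces a rational section of the family of classes (via a multisection and a trick with symmetric products), then uses that the image of this section in $\Chow(X)$ is independent of the choice of section, and finally descends. Your Step~3 is also a bit quick: two points with the same Chow image lie on the same cycle as sets, but one needs connectedness of the classes and the chain description to conclude they are $R$-equivalent, and this uses the specific construction of $R$ from connected $1$-cycles. None of this is fatal to your sketch, but if you want a self-contained argument you should consult \cite[IV, 4.13--4.16]{kollar96} for how the classifying map is actually built.
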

When $R$ is the union of $R_1, \ldots, R_m$, this says that a curve $C \subseteq X$ intersecting $X^0$ is contracted by $\pi$ if and only if $C$ is build-up from curves represented by $V$.

 If $R$ is merely a countable union of algebraic relations, then the quotient exists as well (see \cite[IV, Theorem 4.17]{kollar96}), but $\pi^{-1}(z)$ coincides with an $R$-equivalence class only for a very general $z \in Z^0$. This is an easy consequence of the above theorem as the quotients by $R_1, \ldots, R_m$ must stabilise for $m \gg 0$.

\subsection{Nef reduction map} \label{subsection:nef_reduction_map}
This subsection is based on \cite{bcekprsw02}. The results therein are stated for complex algebraic varieties, but the proofs are valid for all varieties defined over uncountable algebraically closed fields. 

\begin{definition} \label{definition:nef_reduction_map}
Let $X$ be a normal projective variety defined over an uncountable field $k$ and let $L$ be a nef $\mbQ$-Cartier $\mbQ$-divisor. We call a rational map $\phi \colon X \dashrightarrow Z$ a \emph{nef reduction map} if $Z$ is a normal projective variety and there exists an open dense subset $V \subseteq Z$ such that
\begin{enumerate}
	\item $\phi$ is proper over $V$ and $\phi_*\mcO_{\phi^{-1}(V)} = \mcO_V$,
	\item $L|_F \equiv 0$ for all fibres $F$ of $\phi$ over $V$, and
	\item if $x \in X$ is a very general point and $C$ is a curve passing through it, then $C \cdot L = 0$ if and only if $C$ is contracted by $\phi$.
\end{enumerate}
\end{definition}

\begin{theorem}[{\cite[Theorem 2.1]{bcekprsw02}}] \label{theorem:nef_reduction_map} A nef reduction map exists for normal projective varieties defined over an uncountable algebraically closed field $k$. Further, it is unique up to a birational morphism.
\end{theorem}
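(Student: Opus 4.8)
The plan is to follow the construction of \cite[Section~2]{bcekprsw02}: the arguments there are written for complex projective varieties, but they use only the existence of very general points and the theory of quotients of Chow schemes, both of which are available over an arbitrary uncountable algebraically closed field. Concretely, using the material of Subsection~\ref{ss:chow} one proceeds as follows. Fix the ample divisor $H$ on $X$. For each $d' \in \mbN$ let $V_{d'} \subseteq \mathrm{Chow}_{1,d'}(X)$ be the union of those irreducible components whose general member is an $L$-trivial $1$-cycle; since $L$ is nef the intersection number $L \cdot C$ is nonnegative and locally constant in algebraic families of $1$-cycles, so $V_{d'}$ is in fact a union of connected components of $\mathrm{Chow}_{1,d'}(X)$, and every cycle it parametrizes satisfies $L \cdot C = 0$. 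As $\mathrm{Chow}_{1,d'}(X)$ has finitely many components and $d'$ ranges over a countable set, the $V_{d'}$ together give a countable family of closed subschemes $W_1, W_2, \dots$ of the various $\mathrm{Chow}_{1,d'}(X)$, each parametrizing connected $L$-trivial $1$-cycles.

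Next, each $W_i$ determines, via \cite[IV, Theorem~4.8]{kollar96}, a proalgebraic proper connected relation $R^{(i)}$ on $X$ which is itself a countable union of proper algebraic relations and which identifies two points exactly when they lie on a connected curve built up from cycles parametrized by $W_i$. Setting $R \defeq \bigcup_i R^{(i)}$, we obtain again a countable union of proper connected algebraic relations whose classes are precisely the $L$-equivalence classes, i.e.\ $x_1 \sim_R x_2$ iff $x_1$ and $x_2$ can be joined by a connected chain of curves each meeting $L$ in degree zero. Applying the countable-union form of Theorem~\ref{theorem:quotients} (that is, \cite[IV, Theorem~4.17]{kollar96}) produces an open subvariety $X^0 \subseteq X$ and a proper morphism $\pi \colon X^0 \to Z^0$ with connected fibres onto a normal variety $Z^0$ such that $\pi^{-1}(z)$ is an $R$-equivalence class for every very general $z \in Z^0$; it is here that the uncountability of $k$ is essential, as it guarantees that very general points exist at all. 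Replacing $Z^0$ by a normal projective compactification $Z$ yields a rational map $\phi \colon X \dashrightarrow Z$, and applying Stein factorisation and shrinking we find a dense open $V \subseteq Z$ over which $\phi$ is proper with $\phi_*\mcO_{\phi^{-1}(V)} = \mcO_V$, which is condition~(1) of Definition~\ref{definition:nef_reduction_map}.

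It remains to check (2) and (3). Condition~(3) is almost immediate: if $x \in X$ is very general and $C \ni x$ has $L \cdot C = 0$, then the cycle $[C]$ is parametrized by some $W_i$, hence every point of $C$ is $R$-equivalent to $x$; since $x$ is very general, $\phi$ is a morphism near $x$ and $\phi^{-1}(\phi(x))$ is exactly the $R$-class of $x$, so $C$ is contracted. For (2) one must show $L|_F \equiv 0$ for every fibre $F$ of $\phi$ over $V$. First one treats a very general fibre $F = \pi^{-1}(z)$: such an $F$ is an $L$-equivalence class, so it is chain-connected by $L$-trivial curves, and a general point of $F$ is a very general point of $X$ — because each of the countably many proper closed ``bad'' subsets of $X$ meets a general fibre in a proper closed subset. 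Combining this with the nefness of $L$ exactly as in \cite[Section~2]{bcekprsw02} forces every curve in $F$ to be $L$-trivial, whence $L|_F \equiv 0$; the statement for all fibres over a (possibly smaller) $V$ then follows from the constructibility of the locus of $z \in Z^0$ over which the fibre is a chain-connected union of $L$-trivial curves. For uniqueness, given two nef reduction maps $\phi \colon X \dashrightarrow Z$ and $\phi' \colon X \dashrightarrow Z'$, property~(3) together with the fact that general points of general fibres are very general in $X$ shows that, for very general $z$, the fibre $\phi^{-1}(z)$ is maximal among chain-connected unions of $L$-trivial curves, i.e.\ it is a full $L$-equivalence class; the same holds for $\phi'$, so $\phi$ and $\phi'$ have the same very general fibres, the induced rational map $Z \dashrightarrow Z'$ is birational and compatible with the two maps, and passing to suitable birational models turns it into a morphism — the asserted uniqueness up to birational morphism.

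The step I expect to be the main obstacle is condition~(2): proving that $L$ is numerically trivial on a general fibre, rather than merely trivial on the chain-connecting family of curves, is the technical heart of the argument, and it is exactly there that one exploits both the nefness of $L$ and the fact that a general point of a general fibre is very general in $X$. Everything else is formal once the Chow-theoretic quotient of Subsection~\ref{ss:chow} and the notion of very general point (which is why $k$ is assumed uncountable) are in place.
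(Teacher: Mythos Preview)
The paper does not supply its own proof of this theorem; it simply cites \cite[Theorem 2.1]{bcekprsw02} and remarks that the arguments there, though stated over $\mbC$, go through over any uncountable algebraically closed field. Your sketch is a faithful reconstruction of that argument using exactly the Chow-variety and quotient machinery the paper assembles in Subsection~\ref{ss:chow}, so in that sense your approach and the paper's (implicit) approach coincide.

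One small remark: the step you single out as the main obstacle --- showing that $L|_F \equiv 0$ for a general fibre $F$, not merely that $F$ is chain-connected by $L$-trivial curves --- is precisely the content of Theorem~\ref{theorem:numerically_trivial_fibres}, which the paper records separately and explicitly flags as ``fundamental in the proof of Theorem~\ref{theorem:nef_reduction_map}''. So rather than redoing that argument inside your proof, you can (and should) simply invoke Theorem~\ref{theorem:numerically_trivial_fibres} applied to the fibre $F$: since any two points of $F$ are joined by an $L$-trivial chain, $L|_F \equiv 0$. With that substitution your sketch is complete and matches the intended route.
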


We call $n(L) = \dim Z$ the \emph{nef dimension} of $L$, where $Z$ is the target of a nef reduction map $\phi \colon X \dashrightarrow Z$. 

The following result is fundamental in the proof of Theorem \ref{theorem:nef_reduction_map}. 
\begin{theorem}[{\cite[Theorem 2.4]{bcekprsw02}}] \label{theorem:numerically_trivial_fibres} Let $X$ be a projective variety, non-necessarily normal, defined over an uncountable algebraically closed field $k$. Let $L$ be a nef line bundle on $X$. Then $L$ is numerically trivial if and only if any two points in $X$ can be joined by a connected chain $C$ of curves such that $L\cdot C = 0$.
\end{theorem}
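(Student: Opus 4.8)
The forward implication is immediate: it is classical that any two closed points of a connected projective scheme can be joined by a connected chain of irreducible curves, and if $L$ is numerically trivial then every curve $C$ in such a chain satisfies $L\cdot C=0$. So the substance is the converse, which I would approach as follows. Assume that any two closed points of $X$ lie on a connected chain of $L$-trivial curves; deleting repetitions, this says that any two points lie on the support of a connected $1$-cycle $Z$ with $L\cdot Z=0$. The first and conceptually decisive step is to make this bounded, using that $k$ is uncountable. Fix an ample divisor $H$. For $D\in\mathbb{N}$ let $\mathcal{B}_D\subseteq\coprod_{d\le D}\mathrm{Chow}_{1,d}(X)$ be the closed subscheme parametrising connected $L$-trivial $1$-cycles of $H$-degree $\le D$ (using that the class of a $1$-cycle is locally constant in algebraic families, and, if necessary, replacing the connected locus by its closure), let $u_D\colon\mathcal{U}_D\to X$ be the universal cycle over $\mathcal{B}_D$ together with its tautological morphism to $X$, and let $R_D\subseteq X\times X$ be the image of $u_D\times u_D\colon\mathcal{U}_D\times_{\mathcal{B}_D}\mathcal{U}_D\to X\times X$, a closed subset. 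The hypothesis gives $\bigcup_D R_D=X\times X$; since an irreducible variety over an uncountable field is not a countable union of proper closed subsets, $R_D=X\times X$ for some $D$. I fix this $D$ and write $\mathcal{B},\mathcal{U},u$ for $\mathcal{B}_D,\mathcal{U}_D,u_D$.

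Next I would form the incidence variety $W:=\mathcal{U}\times_{\mathcal{B}}\mathcal{U}$, with projections $\pi_1,\pi_2\colon W\to\mathcal{U}$ and structure map to $\mathcal{B}$, and the morphism $\rho:=(u\circ\pi_1,u\circ\pi_2)\colon W\to X\times X$. By the choice of $D$, $\rho$ is surjective, so some irreducible component $W_0$ of $W$ maps onto $X\times X$. Set $\alpha:=\mathrm{pr}_1^*L-\mathrm{pr}_2^*L\in N^1(X\times X)$ and $\beta:=\rho^*\alpha=(u\circ\pi_1)^*L-(u\circ\pi_2)^*L$. Since pullback along the surjective morphism $W_0\to X\times X$ is injective on $N^1$ (curves in $X\times X$ lift to $W_0$), it suffices to show that $\beta|_{W_0}$ is numerically trivial: then $\alpha=0$, and intersecting $\alpha$ with a curve $C\times\{x\}$ gives $L\cdot C=0$ for every curve $C\subseteq X$, i.e.\ $L\equiv0$. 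The reason to pass to $W$ is that $\beta$ is the antisymmetric ``difference of the two evaluations of $L$ along a cycle'', so the contributions of the two points of a cycle ought to cancel.

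It remains to prove $\beta\cdot\Gamma=0$ for every irreducible curve $\Gamma\subseteq W_0$, and here I would split into two cases. If $\Gamma$ is contracted over $\mathcal{B}$, it lies in a single fibre $\mathcal{U}_Z\times\mathcal{U}_Z$ for some $Z\in\mathcal{B}$, so $u(\pi_i(\Gamma))$ is a point or a component of $\Supp Z$; as $Z$ is $L$-trivial and $L$ is nef, each component of $\Supp Z$ meets $L$ in degree $0$, whence $(u\circ\pi_i)^*L\cdot\Gamma=0$ for $i=1,2$ and $\beta\cdot\Gamma=0$. Otherwise $\Gamma$ dominates an irreducible curve $T\subseteq\mathcal{B}$; let $T^\nu\to T$ be the normalisation and $S_1,\dots,S_m$ the normalisations of the components of $\mathcal{U}\times_{\mathcal{B}}T^\nu$ dominating $T^\nu$. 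Each $S_j$ is a normal projective surface fibred over $T^\nu$ with general fibre $F_j$ a component of a general $L$-trivial cycle, so $F_j^2=0$ and the nef divisor $(u|_{S_j})^*L$ has degree $0$ on $F_j$; by the Hodge index theorem this forces $(u|_{S_j})^*L\equiv\mu_jF_j$ for some $\mu_j\ge0$. Because a general cycle parametrised by $T$ is \emph{connected}, the surfaces $S_j$ are pairwise linked through multisection curves $P\subseteq S_j\cap S_{j'}$ along which $u$ agrees, which yields $\mu_j(F_j\cdot P)=(u|_{S_j})^*L\cdot P=(u|_{S_{j'}})^*L\cdot P=\mu_{j'}(F_{j'}\cdot P)$ and hence $\mu_j=\mu_{j'}$; so all $\mu_j$ equal a common value $\mu$. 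Finally $\pi_i(\Gamma)$ lifts to a multisection of some $S_{j_i}\to T^\nu$, and a short degree computation gives $(u\circ\pi_i)^*L\cdot\Gamma=\mu\cdot\deg(\Gamma/T^\nu)$ for both $i$, so $\beta\cdot\Gamma=0$.

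The main obstacles are, I believe, exactly these two points: the uncountability reduction, which is where the hypothesis of the theorem genuinely enters and which fails badly for, say, $X$ fibred over a curve with $L$ pulled back from the base; and the horizontal case above, where one must combine the Hodge index theorem on the surfaces sweeping out the family of $L$-trivial cycles with the connectedness of the cycles in order to match the numerical slopes of $L$ along the different components.
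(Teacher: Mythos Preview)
The paper does not supply a proof of this theorem: it is quoted verbatim from \cite{bcekprsw02} and used as a black box in the discussion of nef reduction maps. So there is no ``paper's own proof'' to compare your attempt against.

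That said, your argument is essentially the correct one and is close in spirit to the original in \cite{bcekprsw02}. The countability exhaustion of $X\times X$ by the closed sets $R_D$ is exactly the mechanism by which uncountability enters, and your reformulation via the class $\alpha=\mathrm{pr}_1^*L-\mathrm{pr}_2^*L$ on $X\times X$ is a clean way to package the conclusion. The vertical case is fine as written. In the horizontal case your use of Hodge index is correct: on each normalised component $S_j$ the pullback $(u|_{S_j})^*L$ is nef with $(u|_{S_j})^*L\cdot F_j=0$ and $F_j^2=0$, forcing $(u|_{S_j})^*L\equiv\mu_jF_j$.

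Two places deserve a little more care. First, when you pass to the closure of the locus of connected $L$-trivial cycles in $\Chow$, you should remark that limits of connected cycles remain connected (the support varies upper-semicontinuously and connectedness is closed in flat families), since the connectedness of the general cycle over $T$ is used crucially later. Second, the sentence ``pairwise linked through multisection curves $P\subseteq S_j\cap S_{j'}$ along which $u$ agrees'' is imprecise: the $S_j$ are normalisations of distinct components, so they do not literally intersect. What you want is a curve $P$ in the (pre-normalisation) intersection $S'_j\cap S'_{j'}\subseteq\mathcal{U}\times_{\mathcal{B}}T^\nu$ dominating $T^\nu$; such $P$ exists because the general fibre of $\mathcal{U}_T\to T^\nu$ is connected. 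Lifting $P$ to $P_j\subseteq S_j$ and $P_{j'}\subseteq S_{j'}$ and computing both sides against $L\cdot u(P)$ gives $\mu_j\deg(P/T^\nu)=\mu_{j'}\deg(P/T^\nu)$, hence $\mu_j=\mu_{j'}$; the possibly different degrees $\deg(P_j/P)$ and $\deg(P_{j'}/P)$ cancel from both sides. Once this is said cleanly, the final degree computation $(u\circ\pi_i)^*L\cdot\Gamma=\mu\cdot\deg(\Gamma/T^\nu)$ goes through and the proof is complete.
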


Now, let $X$ be a normal projective variety defined over a countable algebraically closed field $k$, for instance $k= \Fp$ or $k = \bar{\mbQ}$, and let $L$ be a nef line bundle on it. The definition of a nef reduction map makes no sense for such $X$ as the set of its very general points is empty. 

To circumvent this problem, we choose any uncountable algebraically closed field $K$ containing $k$, and take $X_K$, $L_K$ to be the base changes of $X$ and $L$ to $\Spec K$, respectively. Thereafter, we can define the nef dimension as $n(L) \defeq n(L_K)$. In fact, the nef reduction map of $L_K$ is defined over $k$, as the following result shows.
\begin{proposition} \label{prop:nef_reduction_map_fp} With notation as above, there exist a rational map $\phi \colon X \dashrightarrow Z$ to a normal projective variety $Z$ and an open dense subset $V \subseteq Z$ such that
\begin{enumerate}
  \item $\phi$ is proper over $V$ and $\phi_*\mcO_{\phi^{-1}(V)} = \mcO_V$,
  \item $L|_F \equiv 0$ for all fibres $F$ of $\phi$ over $V$, and
  \item if $\phi_K$ is the base change of $\phi$ to $\Spec K$, then $\phi_K$ is a nef reduction map of $L_K$.
\end{enumerate}
In particular, $\dim Z = n(L)$.
\end{proposition}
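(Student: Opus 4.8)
The plan is to descend the nef reduction map from the uncountable field $K$ back to the countable field $k$ using the theory of Chow varieties recalled in Subsection~\ref{ss:chow}. The crucial observation is that the nef reduction map $\phi_K \colon X_K \dashrightarrow Z_K$, by Theorem~\ref{theorem:nef_reduction_map} and its proof, is constructed as a quotient of $X_K$ by the equivalence relation generated by curves $C$ with $L_K \cdot C = 0$; and by the boundedness part of the construction, only finitely many irreducible components of $\mathrm{Chow}_{1,d'}(X_K)$ (ranging over the finitely many relevant degrees $d'$) are needed to cut out a proper subscheme $V' \subseteq \bigsqcup_{d'} \mathrm{Chow}_{1,d'}(X_K)$ whose associated algebraic relation $R'$, after passing to the stabilised quotient as in Theorem~\ref{theorem:quotients}, produces $\phi_K$ over an open dense $V_K \subseteq Z_K$.

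First I would recall that $\mathrm{Chow}_{1,d'}(X_K) = \mathrm{Chow}_{1,d'}(X) \times_{\Spec k} \Spec K$, since Chow varieties commute with field extension. Hence the finitely many irreducible components of $V'$ are each defined over a finite extension of $k$; enlarging, we find a closed subscheme $V \subseteq \bigsqcup_{d'} \mathrm{Chow}_{1,d'}(X)$, defined over $k$, whose base change to $\Spec K$ is $V'$. (If the components of $V'$ are Galois-conjugate over $k$ we simply take $V$ to be the union of the orbit; the base change still contains $V'$, which only enlarges the equivalence relation harmlessly, or one argues that $k$ algebraically closed forces the components to already be defined over $k$.) Then I apply Theorem~\ref{theorem:quotients} to $X$ and the relations $R_1, \dots, R_m$ associated to $V$, choosing $m$ large enough that the quotients have stabilised: this yields an open $X^0 \subseteq X$ and a proper morphism $\pi \colon X^0 \to Z^0$ with connected fibres whose fibres are exactly the $R$-equivalence classes. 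I take $Z$ to be a normal projective compactification of $Z^0$ and $\phi \colon X \dashrightarrow Z$ the rational map extending $\pi$.

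Next I would verify the three properties. Properness of $\phi$ over $V \defeq Z^0$ and $\phi_*\mcO_{\phi^{-1}(V)} = \mcO_V$ follow from Theorem~\ref{theorem:quotients} (the quotient morphism has connected fibres; after possibly shrinking $Z^0$ and taking Stein factorisation one arranges $\phi_*\mcO = \mcO$). For (2): a general fibre $F$ of $\pi$ is, by construction, built up from curves represented by $V$, all of which have $L$-degree zero after base change, hence $L \cdot C = 0$ for every curve $C \subseteq F$; by Theorem~\ref{theorem:numerically_trivial_fibres} applied over $K$ — or directly, since a connected union of $L$-trivial curves covers $F_K$ — one gets $L_K|_{F_K} \equiv 0$, whence $L|_F \equiv 0$ (numerical triviality descends). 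For (3): the base change $\phi_K$ agrees, by our choice of $V$, with the quotient of $X_K$ by the relation $R'$, which is precisely the nef reduction map of $L_K$ by construction in Theorem~\ref{theorem:nef_reduction_map}; by uniqueness (same theorem) this is \emph{the} nef reduction map up to birational equivalence. The final assertion $\dim Z = n(L)$ is then immediate from the definition $n(L) \defeq n(L_K) = \dim Z_K = \dim Z$.

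**The main obstacle** I expect is the bookkeeping around fields of definition: $\mathrm{Chow}_{1,d'}(X)$ is only a \emph{coarse} space whose universal family need not satisfy the field-of-definition condition, so one must be careful that the equivalence relation $R$ on $X$ (rather than on $X_K$) is still built from genuine $k$-cycles and still contracts the right curves. The clean way around this is to work entirely with the relations $R_i$ as in \cite[IV]{kollar96}, which are defined scheme-theoretically and commute with the base change $k \subseteq K$ by construction, rather than trying to lift individual cycles; then the identity $R_i \times_k K = R'_i$ is formal and everything descends. A secondary technical point is checking that the stabilisation index $m$ can be chosen uniformly — i.e.\ that the finitely many components of $V'$ genuinely come from finitely many components over $k$ — but this is exactly the content of $\mathrm{Chow}_{1,d'}(X_K) = \mathrm{Chow}_{1,d'}(X)_K$ together with the fact that $X_K \dashrightarrow Z_K$ uses only finitely many $d'$ and finitely many components, which is part of the construction of the nef reduction map.
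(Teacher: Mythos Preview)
Your approach is essentially the same as the paper's: descend the finitely many Chow components defining the $L_K$-trivial equivalence relation via $\mathrm{Chow}(X_K) \simeq \mathrm{Chow}(X) \times_k \Spec K$, then take the quotient over $k$ by Theorem~\ref{theorem:quotients}. The paper's proof is terser (it does not spell out the verification of (1)--(2) or the stabilisation of $m$), and your parenthetical remark that $k$ algebraically closed forces the components $V'_i$ to already be defined over $k$ is exactly what the paper uses implicitly, so your Galois-orbit workaround is unnecessary here.
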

\noindent Morphisms $\phi$ as above are independent of the choice of $K$.
\begin{proof}
Let $V' \subseteq \Chow(X_K)$ be the countable union of all irreducible components of the Chow variety parametrising one-cycles $C$ satisfying $L_K \cdot C = 0$. Now, if $\phi_K \colon X_K \dashrightarrow Z'$ is any nef reduction map of $L_K$, then it must be a quotient by the algebraic equivalence relation induced by some irreducible components $V'_1, \ldots, V'_r$ of $V'$. By definition, $\Chow(X_K) \simeq \Chow(X) \times_k \Spec K$, and so there exist irreducible components $V_1, \ldots, V_r \subseteq \Chow(X)$ such that $V'_i \simeq V_i \times_k \Spec K$. Then, the assertion of the proposition is satisfied for $\phi$ being the quotient of $X$ by the algebraic equivelence relation induced by $V_1, \ldots, V_r$ (see Theorem \ref{theorem:quotients}). 
\end{proof}

\begin{definition} \label{definition:nef_reduction_map_fp} We call a morphism satisfying the assertions of Proposition \ref{prop:nef_reduction_map_fp} a \emph{nef reduction map} of $L$ on $X$.
\end{definition}

It is natural to ask if a nef line bundle descends under its nef reduction map. This is known to hold when the restriction of $L$ to the generic fibre is torsion, or when $n(L)=1$.

\begin{lemma} \label{lemma:descending_nef_line_bundles} Let $f \colon X \to Z$ be an equidimensional proper morphism between normal projective varieties. Let $L$ be a nef $\mbQ$-Cartier divisor on $X$ such that $L|_{X_{\mu}} \sim_{\mbQ} 0$, where $X_{\mu}$ is the generic fibre. Then, there exists a $\mbQ$-divisor $L_Z$ on $Z$ such that $L \sim_{\mbQ} f^*L_Z$.
\end{lemma}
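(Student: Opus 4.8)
The plan is to exploit the fact that $L$ is $\mbQ$-linearly trivial on the generic fibre to produce a $\mbQ$-divisor pulled back from $Z$, and then to use equidimensionality of $f$ to promote a rational-equivalence statement over the generic point to one over all of $Z$. First I would choose $m > 0$ such that $mL$ is Cartier and $mL|_{X_{\mu}} \sim 0$; then $mL|_{X_{\mu}} = \mathrm{div}(\varphi)$ for some rational function $\varphi$ on $X_{\mu} = X \times_Z \Spec K(Z)$, which we may regard as a rational function on $X$. Replacing $mL$ by $mL - \mathrm{div}(\varphi)$ — a divisor in the same linear class — we may assume $mL$ is vertical with respect to $f$, i.e.\ $\Supp(mL) \subseteq f^{-1}(E)$ for some divisor $E \subsetneq Z$. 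After shrinking $Z$ to remove the codimension-$\geq 2$ locus (and using that a $\mbQ$-divisor linearly trivial on a big open set of a normal variety is linearly trivial), we may assume $E$ is a reduced divisor, each component of which I can handle separately.

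The key step is then the following: since $f$ is equidimensional, for each prime divisor $P \subseteq Z$ the scheme-theoretic fibre $f^{-1}(P)$ is a divisor on $X$, every component of which dominates $P$, and the components of $f^*P$ generate (over $\mbQ$) all vertical prime divisors mapping into $P$. Concretely, write $mL = \sum_{P \subseteq E} D_P$ where $D_P$ is the part of $mL$ supported over $P$. I claim each $D_P$ is $\mbQ$-linearly equivalent to the pullback of a $\mbQ$-divisor on $Z$ supported on $P$. To see this, restrict to the generic point $\eta_P$ of $P$: the local ring $\mcO_{Z,\eta_P}$ is a DVR, $X \times_Z \Spec \mcO_{Z,\eta_P}$ has all its special fibre components of codimension one, and the nefness of $L$ forces the coefficients of $D_P$ to be proportional to the multiplicities of $f^*P$ — this is the standard argument that a vertical nef divisor, when restricted to the preimage of a DVR, is a nonnegative (or nonpositive) rational multiple of the fibre, because the intersection form on the components of a connected fibre over a curve is negative semidefinite with the fibre class in its kernel (Zariski's lemma / the Hodge index type argument). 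Hence $D_P \sim_{\mbQ} \lambda_P f^*P$ for some $\lambda_P \in \mbQ$, after possibly adjusting by a principal divisor coming from $\mcO_{Z,\eta_P}$ which globalises because components not over $P$ have been separated out.

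Setting $L_Z \defeq \frac{1}{m}\sum_P \lambda_P P$ then gives $L \sim_{\mbQ} f^*L_Z$ on the big open set, hence on all of $Z$ by normality. I expect the main obstacle to be the global-to-local bookkeeping in the step above: the ``proportional to the fibre'' conclusion is genuinely local over each $\eta_P$, and one must check that the principal divisors used to trivialise $D_P$ over $\Spec \mcO_{Z,\eta_P}$ can be chosen compatibly (or simply absorbed, using that two vertical $\mbQ$-divisors over $P$ agreeing at $\eta_P$ and both being rational multiples of $f^*P$ must be equal). A clean way to package this is to observe that for a connected component of a fibre over a curve, the only vertical numerically-trivial-on-fibres effective configurations are multiples of the whole fibre; combined with $f_*\mcO_X = \mcO_Z$ (which follows from $f$ being a contraction, or can be arranged) this pins down $\lambda_P$ uniquely and makes the patching automatic. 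One should also note the hypothesis does not literally say $f$ is a contraction, so if $f_*\mcO_X \neq \mcO_Z$ one first Stein-factorises; the Stein factorisation is finite and the statement is easily seen to be stable under such base change.
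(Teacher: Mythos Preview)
Your approach is correct and essentially the same as the paper's: both make $L$ vertical and then use Zariski's lemma to see that the vertical part is a pullback. The paper is more economical --- it chooses $L_Z$ so that $E \defeq f^*L_Z - L$ is effective, vertical, and contains no full fibre $f^{-1}V$, and then a single application of \cite[Lemma 1.5]{fujita86b} forces $E=0$ --- whereas you unpack this prime by prime. Two small remarks: your ``negative semidefinite intersection form'' justification is the relative-dimension-one statement and in general must be replaced by Fujita's formulation (an effective vertical $E$ with $-E$ relatively nef and containing no full fibre vanishes); and once you know $D_P$ is a rational multiple of $f^*P$ this is an \emph{equality} of $\mbQ$-divisors, so your patching worry disappears. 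Your Stein-factorisation remark is apt: both arguments tacitly use connected fibres.
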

\begin{proof}
By assumptions, there exists a $\mbQ$-divisor $L_{Z}$ on $Z$ and an effective $\mbQ$-divisor $E$ on $Z$ such that:
\begin{itemize}
	\item $L + E \sim_{\mbQ} f^*L_Z$,
	\item $\dim f(E) < \dim Z$, and
	\item for any irreducible component $V \subseteq f(E)$, we have $f^{-1} V \not \subseteq \Supp E$.
\end{itemize}
By Zariski's lemma (more precisely \cite[Lemma 1.5]{fujita86b}), $E = 0$, which concludes the proof.
\end{proof}
Note that $f^*L_Z$ is well defined even when $L_Z$ is not $\mbQ$-Cartier, because $f$ has equidimensional fibres.

\begin{lemma}\label{lemma:descending_nef_line_bundles_to_curves}  Let $X$ be a normal projective three-dimensional variety defined over an algebraically closed field $k$, let $C$ be a smooth curve, let $f \colon X \to C$ be a contraction, and let $X_{\mu}$ be the generic fibre. Further, let $L$ be a nef $\mbQ$-Cartier divisor on $X$ such that $L|_{X_{\mu}} \equiv 0$. Then there exists a $\mbQ$-divisor $D$ on $C$ such that $L \equiv f^*D$.
\end{lemma}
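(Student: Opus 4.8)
The statement is the one-dimensional-base analogue of Lemma \ref{lemma:descending_nef_line_bundles}, but weakened from $\mbQ$-linear equivalence to numerical equivalence, and strengthened in that no equidimensionality hypothesis is imposed (a contraction to a smooth curve is automatically equidimensional anyway, since every fibre is a divisor). The natural strategy is to reduce to Lemma \ref{lemma:descending_nef_line_bundles} and pick up the defect in numerical equivalence coming from the fact that $L|_{X_\mu}$ is only numerically trivial, not $\mbQ$-trivial, on the geometric generic fibre.

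First I would analyse the generic fibre. Since $f$ is a contraction to a smooth curve $C$, it is flat, and $X_\mu$ is a normal projective surface over $K(C)$ with $L|_{X_\mu}$ nef and numerically trivial. On a projective variety a nef numerically trivial $\mbQ$-Cartier divisor has torsion class up to numerical equivalence, but here the subtlety is that the Néron–Severi group of $X_\mu$ need not be torsion-free and $L|_{X_\mu}$ need not be $\mbQ$-linearly trivial — this is exactly why the conclusion is only up to $\equiv$. So I would instead argue directly with intersection numbers: it suffices to check that $L \cdot \gamma = 0$ for every curve $\gamma \subseteq X$ contracted by $f$, i.e.\ every component of every fibre, because then $L$ is numerically trivial on all fibres, and a standard see-saw / numerical descent argument over the smooth curve $C$ produces a $\mbQ$-divisor $D$ on $C$ with $L \equiv f^*D$. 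Concretely: pick a very general closed point $c \in C$ (after base change to an uncountable field if necessary, using Proposition \ref{prop:nef_reduction_map_fp}-style spreading-out, or simply working numerically which is insensitive to base field extension); the fibre $X_c$ is numerically equivalent to $X_\mu$ in the appropriate sense, so $L \cdot \gamma_c = 0$ for its components by specialising the generic-fibre computation, and semicontinuity/flatness propagates this to all fibres since fibres are algebraically equivalent and $L$ is a fixed class.

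Having established $L \cdot \gamma = 0$ for all vertical curves $\gamma$, the descent itself runs as follows: choose a multisection or use that $\mathrm{Pic}(C)\otimes\mbQ$ surjects onto the numerical classes on $C$, and consider $L - f^* D$ for a $\mbQ$-divisor $D$ chosen so that $(L - f^*D)\cdot \Sigma = 0$ for one section/multisection $\Sigma$ of $f$ (possible since $L\cdot F$ is a multiple of a point class on $C$, pulled back, and one adjusts $\deg D$ accordingly); then $M := L - f^*D$ satisfies $M \cdot \gamma = 0$ for every vertical curve and $M \cdot \Sigma = 0$, hence $M$ is numerically trivial on $X$ by the structure of $N_1(X)$ over a curve (every curve class is a combination of vertical classes and the multisection class modulo vertical). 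This gives $L \equiv f^*D$.

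The main obstacle I anticipate is the first step — verifying $L \cdot \gamma = 0$ for \emph{all} components of \emph{all} fibres, not just the generic one. Over an uncountable field this follows from the very-general hypothesis together with the fact that the non-trivial intersection locus would be a countable union of proper subvarieties of $C$; over a countable field like $\Fp$ or $\bar{\mbQ}$ one must first base-change to an uncountable field, check numerical triviality there (numerical equivalence and intersection numbers are preserved under field extension), and descend — the conclusion $L \equiv f^*D$ is again insensitive to the base field. A secondary technical point is handling reducible or non-reduced fibres in positive characteristic, where the components of a special fibre need not be numerically equivalent to the generic fibre individually; but since one only needs $L$ to pair to zero with each such component, and each such component is a limit of (pieces of) smooth general fibres in the appropriate sense via flatness of $f$ and constancy of $L\cdot X_c$, this causes no real difficulty.
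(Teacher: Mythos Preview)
Your two-step outline --- first show $L \equiv_f 0$, then descend numerically to $C$ --- matches the paper's, but the paper dispatches both steps by citation (\cite[Lemma 5.3]{bw14} for the first, then a resolution of singularities followed by \cite[Lemma 5.2]{bw14} for the second), whereas your direct arguments have gaps. For the first step, the claim that each component of a special fibre is ``a limit of (pieces of) smooth general fibres'' is wrong: only the whole fibre deforms, not its individual components, so one cannot specialise $L \cdot \gamma = 0$ componentwise this way. The correct argument uses nefness of $L$ essentially: for an ample $H$ on $X$ one has $L \cdot H \cdot X_c = 0$ for every $c \in C$ (by constancy of fibre classes and vanishing on the generic fibre), and since each summand $(L|_{S_i}) \cdot (H|_{S_i})$ over the surface components $S_i$ of $X_c$ is non-negative, each must vanish, forcing $L|_{S_i} \equiv 0$. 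The base-change to an uncountable field is unnecessary.

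For the second step, your assertion that ``every curve class is a combination of vertical classes and the multisection class'' is equivalent to saying that the kernel of $f_* \colon N_1(X)_{\mbQ} \to N_1(C)_{\mbQ}$ is spanned by classes of vertical curves. This is true but is precisely the non-trivial content of \cite[Lemma 5.2]{bw14}; it requires an actual argument (for instance via the relative Picard functor on a smooth model, which is why the paper first passes to a resolution) and cannot simply be asserted.
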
  
\begin{proof}
By \cite[Lemma 5.3]{bw14}, we have $L \equiv_f 0$. Replacing $X$ by a resolution of singularities, we can assume that $X$ is smooth. Thus we can conclude the proof by \cite[Lemma 5.2]{bw14}.
\end{proof}

Last, let us state the following result which allows for applying the nef reduction map in the context of the base point free theorem.
\begin{theorem}[{\cite[Theorem 1.4]{ctx13}}] \label{theorem:birkar} Let $(X,\Delta)$ be a projective log pair defined over an algebraically closed field $k$ of characteristic $p>0$, and let $L$ be a nef $\mbQ$-Cartier divisor such that  $L-(K_X+\Delta)$ is nef and big and $n(L)=\dim X$. Then $L$ is big.
\end{theorem}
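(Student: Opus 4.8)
I would prove this by induction on $n \defeq \dim X$. The base case $n \le 1$ is immediate: on a curve $n(L)=1$ exactly when $\deg L>0$, i.e.\ exactly when $L$ is big, and the hypothesis on $L-(K_X+\Delta)$ is not even needed. So assume $n\ge 2$. After replacing $X$ by a $\mbQ$-factorialisation (or otherwise arranging the relevant divisors to be $\mbQ$-Cartier) we may assume $X$ is $\mbQ$-factorial. Set $D \defeq L-(K_X+\Delta)$, which is nef and big; by Kodaira's lemma write $D \sim_{\mbQ} A+E$ with $A$ an ample $\mbQ$-divisor and $E\ge 0$ an effective $\mbQ$-divisor, so that $L \sim_{\mbQ} K_X+(\Delta+E)+A$, i.e.\ $L$ is ``adjoint plus ample''.

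The idea is then to descend to a general hyperplane section and invoke the inductive hypothesis. I would fix a very ample $A_0$ with $A-A_0$ ample and take $S\in |A_0|$ general, so that $S$ is a normal prime divisor meeting the construction suitably transversally. Then $L|_S$ is nef, and $n(L|_S)=\dim S=n-1$: indeed a covering family of $L|_S$-trivial curves on the members $S$ would sweep out a covering family of $L$-trivial curves on $X$, contradicting $n(L)=n$ by the nef reduction theory of Subsection~\ref{subsection:nef_reduction_map}. Setting $\Delta_S \defeq (\Delta+E)|_S \ge 0$, adjunction gives $K_S+\Delta_S = (K_X+\Delta+E+S)|_S$, hence
\[
L|_S-(K_S+\Delta_S) \sim_{\mbQ} (L-K_X-\Delta-E-A_0)|_S = (A-A_0)|_S,
\]
which is ample, so $L|_S-(K_S+\Delta_S)$ is nef and big. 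Thus $(S,\Delta_S,L|_S)$ falls under the inductive hypothesis, giving that $L|_S$ is big, i.e.\ $L^{n-1}\cdot S>0$; equivalently, the numerical dimension of $L$ satisfies $\nu(L)\ge n-1$.

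The remaining task — and the main obstacle — is to promote $\nu(L)\ge n-1$ to $L^n>0$, and this is exactly where $n(L)=n$ must be used in an essential, non-inductive way. Suppose $L^n=0$. Then $\gamma \defeq L^{n-1}$ is a nonzero nef $1$-cycle class (a limit of complete-intersection curve classes) with $L\cdot\gamma=0$, and since $D\sim_{\mbQ}A+E$ with $A$ ample we get $D\cdot\gamma \ge A\cdot\gamma>0$, whence $(K_X+\Delta)\cdot\gamma = L\cdot\gamma-D\cdot\gamma<0$. So I would be faced with a movable, $L$-trivial curve class that is $(K_X+\Delta)$-negative, and I would have to manufacture from it an \emph{honest} covering family of $L$-trivial curves on $X$, which would contradict $n(L)=n$. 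Converting the numerical identity $L^{n-1}\cdot L=0$ into actual curves is the crux: I would attempt it via the characteristic-$p$ bend-and-break/cone-theorem machinery, using $(K_X+\Delta)\cdot\gamma<0$ to extract rational curves of controlled degree, combined with Keel's semiampleness criterion (Lemma~\ref{lem:Keel_universal_mor}) and Theorem~\ref{theorem:numerically_trivial_fibres}; alternatively, by cutting down by further general hyperplanes while carrying along the $(K_X+\Delta)$-negative movable class, so as to reduce to the surface case, where the classification of surfaces, Zariski decomposition, and the cone theorem are all available in positive characteristic. (This is, in essence, the content of \cite[Theorem~1.4]{ctx13}.)
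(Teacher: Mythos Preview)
The paper does not prove this theorem at all: it is simply quoted from \cite[Theorem~1.4]{ctx13}, and the only ``proof'' present in the source is a commented-out two-line remark that one may base change to an uncountable algebraically closed field (so that the nef reduction map of Definition~\ref{definition:nef_reduction_map} is available) and then invoke \cite{ctx13} directly. There is therefore no argument in the paper to compare your proposal against.

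That said, your proposal has a genuine gap. The inductive set-up gets you to $\nu(L)\ge n-1$, but the step you yourself flag as ``the main obstacle'' --- upgrading $\nu(L)\ge n-1$ to $L^n>0$ --- is not carried out; you only list possible strategies (bend-and-break on the $(K_X+\Delta)$-negative class $\gamma=L^{n-1}$, cutting to surfaces, etc.) and then note that this ``is, in essence, the content of \cite[Theorem~1.4]{ctx13}''. That is circular: the statement you are trying to prove \emph{is} \cite[Theorem~1.4]{ctx13}. Worse, the induction buys you essentially nothing: the difficulty of producing an honest covering family of $L$-trivial curves from the numerical data $L\cdot L^{n-1}=0$ and $(K_X+\Delta)\cdot L^{n-1}<0$ is the entire content of the theorem, and it is no easier once you know $\nu(L)\ge n-1$. (A minor additional point: the claim $n(L|_S)=\dim S$ for general $S$ also needs more care than ``a covering family on the $S$'s would sweep out one on $X$'', since the ``very general'' locus depends on $S$; this can be made precise via the Chow-variety description in Subsection~\ref{ss:chow}, but it is not automatic.)

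If you want an actual proof rather than a citation, the argument in \cite{ctx13} is direct, not inductive: assuming $L^n=0$, one uses that $L-(K_X+\Delta)$ is nef and big to force $K_X\cdot\gamma<0$ for a movable curve class $\gamma$ approximated by complete intersections, and then applies bend-and-break in positive characteristic to produce a bounded family of $L$-trivial rational curves dominating $X$, contradicting $n(L)=\dim X$.
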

\section{A weak canonical bundle formula} \label{s:cbf}
The goal of this section is to show Theorem \ref{theorem:canonical_bundle_formula_main} (cf.\ Theorem \ref{theorem:canonical_bundle_formula_main_intro}) and Theorem \ref{theorem:canonical_bundle_formula_3fold} (cf.\ Theorem \ref{theorem:canonical_bundle_formula_3fold_intro}). 

\subsection{The canonical bundle formula for boundaries separable over the base}
In this subsection, we derive the weak version of the canonical bundle formula for when $(X_{\overline \mu}, \Delta|_{X_{\overline \mu}})$ is log canonical where $X_{\overline \mu}$ is the geometric generic fibre of $\phi$ (Proposition \ref{proposition:canonical_bundle_formula}). To this end, we need the following result. 

\begin{lemma}\label{lem:cbf_standard}
Let $(X,\Delta)$ be a quasi-projective log pair defined over an algebraically closed field $k$ of characteristic $p>0$ and let $\phi \colon X \to Z$ be a contraction. Assume that the generic fibre $X_{\mu}$ of $\phi$ is a smooth rational curve, the coefficients of $\Delta^h$ are at most one, and $K_X + \Delta \sim_{\mbQ} \phi^*L_Z$ for some $\mbQ$-Cartier $\mbQ$-divisor $L_Z$ on $Z$. Further, suppose that $\Supp \Delta^h$ is a union of sections of $\phi$ over the generic point of $Z$. Then $L_Z \sim_{\mbQ} K_Z + \Delta_Z$ for some effective $\mbQ$-divisor $\Delta_Z$.
\end{lemma}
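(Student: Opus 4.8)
The plan is to follow the classical proof of the canonical bundle formula in relative dimension one, keeping careful track of signs. First I would restrict everything to the generic fibre. Since $\Supp\Delta^h$ contains a section of $\phi$ over the generic point, the smooth rational curve $X_{\mu}$ has a $K(Z)$-rational point, so $X_{\mu}\cong\mbP^1_{K(Z)}$. Restricting $K_X+\Delta\sim_{\mbQ}\phi^*L_Z$ to $X_{\mu}$ — along which $\Delta^v$ is disjoint, $\phi^*L_Z$ restricts to $0$, and $(K_X+\Delta)|_{X_{\mu}}=K_{X_{\mu}}+\Delta^h|_{X_{\mu}}$ — gives $\deg\Delta^h|_{X_{\mu}}=-\deg K_{X_{\mu}}=2$. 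Writing $\Delta^h=\sum_j b_jS_j$ with each $S_j$ a section over the generic point and $b_j\le 1$, we get $\sum_j b_j=2$, so $\Delta^h$ has at least two components, two of which I fix and call $S_1,S_2$.

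Next I would push $K_X+\Delta$ down through $S_1$ after arranging that $S_1$ occurs with coefficient one. Since $\Delta^h|_{X_{\mu}}$ and $(S_1+S_2)|_{X_{\mu}}$ are effective of the same degree on $\mbP^1_{K(Z)}$, they are linearly equivalent, hence $\Delta^h\sim_{\mbQ}S_1+S_2+W$ for a \emph{vertical} $\mbQ$-divisor $W$ on $X$ (a $\mbQ$-divisor restricting to $\sim_{\mbQ}0$ on $X_{\mu}$ differs from a vertical one by a $\mbQ$-principal divisor). Thus
\[
\phi^*L_Z\sim_{\mbQ}K_X+\Delta\sim_{\mbQ}K_X+S_1+\big(S_2+W+\Delta^v\big),
\]
where $S_1$ has coefficient one and is not contained in the support of $S_2+W+\Delta^v$ (which is $S_2$ together with vertical divisors). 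Applying adjunction along $S_1$ and then pushing forward by the proper birational morphism $\pi\colon\overline{S_1}\to Z$ that $\phi$ induces on the normalisation (using $\pi_*K_{\overline{S_1}}=K_Z$ and $\pi_*\pi^*L_Z=L_Z$) yields $L_Z\sim_{\mbQ}K_Z+\Delta_Z$ with $\Delta_Z=\pi_*\mathrm{Diff}_{\overline{S_1}}(S_2+W+\Delta^v)$. Several pieces of $\Delta_Z$ are effective — the part coming from $\mathrm{Diff}_{\overline{S_1}}(0)$, and the parts coming from $S_2$ and from $\Delta^v$, since these are effective and $S_1$ lies in neither support; equivalently, after the routine reduction to $X,Z$ smooth with $\phi$ equidimensional (which does not touch the generic fibre, so the conclusion descends), the ``discriminant'' part $B_Z=\sum_P(1-t_P)P$ of $\Delta_Z$ is $\ge 0$ because every component of $\phi^*P$ occurs with multiplicity $\ge 1$ and $\Delta\ge 0$, forcing the log canonical threshold $t_P\le 1$ — this last check being most transparently done prime divisor by prime divisor over the discrete valuation rings $\mcO_{Z,P}$.

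The hard part will be the remaining contribution, $\pi_*(W|_{\overline{S_1}})$ (the ``moduli part'' $M_Z:=L_Z-K_Z-B_Z$): one must show it is $\mbQ$-linearly equivalent to an effective divisor. This is exactly where the hypothesis that $\Supp\Delta^h$ consists of \emph{sections} has to be used decisively — the genus-zero fibre together with its marked points $P_j=S_j|_{X_{\mu}}$, all $K(Z)$-rational, is rigidified by $S_1$ and $S_2$, so the configuration carries no moduli beyond a torsor over $Z$, whose divisor class group is trivial, and $M_Z$ should therefore be $\mbQ$-linearly trivial. In characteristic zero this is a consequence of the Hodge-theoretic semipositivity of the moduli part; in characteristic $p$ that tool is gone, so one must run the rigidity argument by hand, and carrying it out — in particular handling over each $\mcO_{Z,P}$ the places where components of $\Delta^h$ become tangent to, or collide over, a fibre, and checking that every such place contributes to $\Delta_Z$ with nonnegative coefficient — is, I expect, the main technical obstacle of the lemma.
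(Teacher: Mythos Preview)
Your approach via adjunction along a section is genuinely different from the paper's, and it gets the discriminant part right: once one reduces to $X,Z$ smooth with $\phi$ equidimensional, the inequality $t_P\le 1$ is immediate and $B_Z\ge 0$ follows. The difficulty, as you correctly flag, is the moduli part $M_Z$, and here your argument has a real gap.

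The claim that ``the configuration carries no moduli beyond a torsor over $Z$'' and hence $M_Z$ is $\mbQ$-linearly trivial is not correct when $\Delta^h$ has $m\ge 4$ components. Fixing $S_1,S_2$ trivialises the $\mbP^1$-bundle (sending them to $0,\infty$), but then each remaining section $S_j$ is the graph of a rational function $Z\dashrightarrow\mbP^1$, and the cross-ratios of the marked points give a genuine moduli map $Z\dashrightarrow\overline{M}_{0,m}$, which is positive-dimensional for $m\ge 4$. The moduli part is the pullback of a divisor from $\overline{M}_{0,m}$ and need not be $\mbQ$-trivial; what is true (and what is needed) is that it is \emph{semiample}, hence $\mbQ$-effective. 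Your adjunction computation along $S_1$ does not see this: the divisor $W$ you introduce via $\Delta^h\sim_{\mbQ}S_1+S_2+W$ has no reason to restrict effectively to $\overline{S_1}$, and the heuristic you offer does not control it.

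The paper's proof bypasses this by working directly with the moduli space: it builds the map $u\colon Z\dashrightarrow\overline{M}_{0,m}$ induced by the $m$ sections, pulls back the universal $\mbP^1$-bundle $\overline{U}_{0,m}\to\overline{M}_{0,m}$, and reduces the effectivity of $M_Z$ to the semiampleness of $K_{\overline{U}_{0,m}/\overline{M}_{0,m}}+D$ (where $D$ is the weighted sum of the universal sections), a known result cited from \cite[Lemma 4.6]{hacondas}. This is precisely the ``by hand'' positivity statement replacing Hodge-theoretic semipositivity that you allude to but do not supply. If you want to salvage the adjunction route, you would still need to invoke (or reprove) this semiampleness on $\overline{M}_{0,m}$; the paper's diagram just makes that dependence explicit.
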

\noindent 
This lemma follows from the standard canonical bundle formula with $\Supp \Delta^h$ separable and tamely ramified over $Z$, a result which is verified in \cite[Subsection 6.2]{ctx13} (see also \cite[Theorem 4.8]{hacondas} and \cite{ps09}). Thus, we only provide a sketch of the argument.
\begin{proof}
We are going to show that $\Delta_Z = \Delta_{\mathrm{div}} + \Delta_{\mathrm{mob}}$, where $\Delta_{\mathrm{mob}}$ is an effective $\mbQ$-divisor and
\begin{align*}
\Delta_{\mathrm{div}} &\defeq \sum_{\text{Weil divisor } Q} (1 - c_Q)Q, \text{ for}\\
c_Q &\defeq \sup\,\{c \in \mbQ \mid (X,\Delta + c\phi^*Q) \text{ is lc over the generic point of } Q\}.
\end{align*}
As $c_Q \leq 1$, we get that $\Delta_{\mathrm{div}}$ is effective.

Consider the following diagram:
\begin{center}
\begin{tikzcd}
& & \overline{X} \arrow[swap]{ld}{\pi_2} \arrow{d}{\pi_1}   \\ 
\overline{U}_{0,m} \arrow{d}{\gamma} & U \arrow[dashed]{r}\arrow[swap]{dr}{\phi_U} \arrow[swap]{l}{\psi} & X \arrow{d}{\phi}  \\
\overline{M}_{0,m} & &  Z \arrow{ll}{u} ,
\end{tikzcd}
\end{center}
where 
\begin{itemize}
	\item $\overline{M}_{0,m}$ is the moduli space of $m$-pointed stable curves of genus zero,
	\item $\gamma \colon \overline{U}_{0,m} \to \overline{M}_{0,m}$ is a $\mbP^1$-bundle constructed by blowing-down divisors on the universal family (see \cite[Lemma 4.6]{hacondas}),
	\item $u \colon Z \dashrightarrow \overline{M}_{0,m}$ is a map given by the fact that $\phi \colon X \to Z$ is generically a $\mathbb{P}^1$-bundle with $m$ sections $\Supp \Delta^h$,
	\item $U \defeq \overline{U}_{0,m} \times_{\overline{M}_{0,m}} Z$,
	\item $\pi_1 \colon \overline{X} \to X$ is a birational resolution of the rational map $U \dashrightarrow X$. 
\end{itemize}
Since effective divisors extend in codimension one, we can replace $Z$ by a smooth open subset $Z' \subseteq Z$ whose complement is of codimension two and $X$ by $X \times_Z Z'$ so that $u \colon Z \to \overline{M}_{0,m}$ is a well defined morphism. Since $Z$ is smooth, $K_Z+\Delta_{\mathrm{div}}$ is a $\mbQ$-Cartier $\mbQ$-divisor.

To prove the lemma, it is enough to show that $K_X+\Delta - \phi^*(K_Z + \Delta_{\mathrm{div}})$ is semiample. 
Define $\Delta_{\overline X}$  as the log pullback of $\Delta$ via $\pi_1$, and set \[
\Delta_U \defeq (\pi_2)_*\Delta_{\overline{X}}.
\] By construction of $\overline{U}_{0,m}$ and by the two-dimensional inversion of adjunction, we have that $(U, \Delta_U^h + \phi_U^{-1}Q)$ are lc over the generic points of irreducible divisors $Q\subseteq \Supp \Delta_{\mathrm{div}}$ (cf.\ \cite[(4.11)]{hacondas}). Since $\phi_U$ is a $\mathbb{P}^1$-fibration, this implies that $\Delta_U^v = \phi_U^* \Delta_{\mathrm{div}}$. Thus
\[
K_{U} + \Delta_U - (\phi_U)^*(K_Z + \Delta_{\mathrm{div}}) \sim_{\mbQ} \psi^*(K_{\overline{U}_{0,m}/\overline{M}_{0,m}} + D) 
\]
where $D$ is a sum of the canonical sections of $\gamma$ with rational coefficients in $[0,1]$ and $K_{\overline{U}_{0,m}/\overline{M}_{0,m}} + D$ is semiample by \cite[Lemma 4.6]{hacondas}. This finishes the proof of the lemma. 
\end{proof}

The following result provides the first step in the proof of Theorem \ref{theorem:canonical_bundle_formula_main}.
\begin{proposition}\label{proposition:canonical_bundle_formula} Let $(X,\Delta)$ be a quasi-projective log pair defined over an algebraically closed field $k$ of characteristic $p>0$ and let $\phi \colon X \to Z$ be a contraction. Assume that the geometric generic fibre $X_{\bar{\mu}}$ of $\phi$ is a smooth curve, $(X_{\bar{\mu}}, \Delta|_{X_{\bar{\mu}}})$ is log canonical, and $K_X + \Delta \sim_{\mbQ} \phi^*L_Z$ for some $\mbQ$-Cartier $\mbQ$-divisor $L_Z$ on $Z$. Then $L_Z \sim_{\mbQ} K_Z + \Delta_Z$ for an effective $\mbQ$-divisor $\Delta_Z$. 
\end{proposition}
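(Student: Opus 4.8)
The plan is to reduce the statement to Lemma \ref{lem:cbf_standard} after a suitable finite base change of $Z$, using that the conclusion only concerns a $\mbQ$-linear equivalence class and is therefore insensitive to (i) removing from $Z$ a closed subset of codimension $\ge 2$ and (ii) replacing $Z$ by a finite cover and pushing forward. Restricting $K_X+\Delta\sim_{\mbQ}\phi^*L_Z$ to $X_{\bar\mu}$ gives $\deg(K_{X_{\bar\mu}}+\Delta|_{X_{\bar\mu}})=0$; since $X_{\bar\mu}$ is a smooth curve and $\Delta\ge0$, its genus is $0$ or $1$, and in the genus-one case $\Delta|_{X_{\bar\mu}}=0$, so $\Delta$ is vertical. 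First I would shrink $Z$: by Theorem \ref{thm:equidimensional} we may assume $\phi$ is equidimensional, and then, since effective $\mbQ$-divisors and $\mbQ$-linear equivalences of $\mbQ$-Cartier $\mbQ$-divisors extend across codimension-two subsets, we may further assume that $Z$ is smooth, that $X$ is Gorenstein, and that $\phi$ is flat; in particular $K_Z$ is $\mbQ$-Cartier. As $\Delta_Z$ is only required to be effective with $L_Z\sim_{\mbQ}K_Z+\Delta_Z$, it suffices to show that $L_Z-K_Z$ is $\mbQ$-linearly equivalent to an effective divisor; and this property descends along any finite surjective morphism $f\colon T\to Z$ of normal varieties, since if $f^*(L_Z-K_Z)\sim_{\mbQ}\Delta_T\ge0$ then $f_*f^*(L_Z-K_Z)=(\deg f)(L_Z-K_Z)$ and the fact that $f_*$ preserves effectivity and $\mbQ$-linear equivalence give $L_Z-K_Z\sim_{\mbQ}\tfrac1{\deg f}f_*\Delta_T\ge0$.

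Assume first $\mathrm{genus}(X_{\bar\mu})=0$. Let $P_1,\dots,P_r$ be the horizontal components of $\Delta$ restricted to the generic fibre $X_{\mu}$, with residue fields $K(P_i)$, and let $K(Z)\subseteq K(T)$ be a finite extension into which every $K(P_i)$ embeds over $K(Z)$: then $X_{\mu}\times_{K(Z)}K(T)$ is a smooth genus-zero curve with a rational point, hence $\cong\mbP^1_{K(T)}$, and each $P_i$ is realised by a section. Let $T$ be the normalisation of $Z$ in $K(T)$, let $Y$ be the normalisation of the component of $X\times_Z T$ dominating $T$ (reduced by Remark \ref{rem:reduced}), and let $\psi\colon Y\to T$ and $h\colon Y\to X$ be the induced maps. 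Writing $\Gamma\defeq D+h^*\Delta$, where $h^*K_{X/Z}=K_{Y/T}+D$ with $D\ge0$ supported on the conductor (Proposition \ref{proposition:relative_canonical_divisor_under_base_change}), a direct computation gives
\[
K_Y+\Gamma\ \sim_{\mbQ}\ \psi^*\bigl(f^*(L_Z-K_Z)+K_T\bigr),
\]
with $\Gamma\ge0$, the generic fibre of $\psi$ equal to $\mbP^1_{K(T)}$, and $\Supp\Gamma^h$ a union of sections of $\psi$ over the generic point of $T$. Granting that the coefficients of $\Gamma^h$ are at most one (discussed below), Lemma \ref{lem:cbf_standard} applies, after a further harmless shrinking of $T$, and yields $f^*(L_Z-K_Z)+K_T\sim_{\mbQ}K_T+\Delta_T$ for some effective $\Delta_T$, i.e.\ $f^*(L_Z-K_Z)\sim_{\mbQ}\Delta_T\ge0$; pushing forward as above concludes this case. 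When $\mathrm{genus}(X_{\bar\mu})=1$ we have $\Delta^h=0$, and after a finite base change $T\to Z$ turning $\phi$ into an elliptic fibration admitting a section one obtains, from the canonical bundle formula for elliptic fibrations in characteristic $p$, that $f^*(L_Z-K_Z)$ is $\mbQ$-linearly equivalent to an effective divisor; pushing forward again finishes the proof.

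The main obstacle is the claim that $\Gamma^h$ has coefficients at most one. If $P_i$ is separable over $Z$, then $h$ is étale along the corresponding section at its generic point and the coefficient is unchanged and $\le1$. The difficulty is with components $P_i$ inseparable over $Z$, say of inseparable degree $p^{k}$: the hypothesis that $(X_{\bar\mu},\Delta|_{X_{\bar\mu}})$ be log canonical forces the coefficient of such a $P_i$ to be at most $1/p^{k}$ — this is exactly the dichotomy with the ``wild'' divisors $aT$, $a>1/p^{k}$, of the non-log-canonical case treated separately in Theorem \ref{theorem:canonical_bundle_formula_main} — and a computation of the conductor term $D$ along the (degree-$p^{k}$, purely inseparable) base change then shows that the resulting section picks up coefficient at most $1/p^{k}\cdot p^{k}=1$. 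The remaining points — choosing Gorenstein flat models over a big open of $Z$, replacing $T$ by the Stein factorisation of $Y\to T$ when needed, handling possible collisions among the horizontal components and the vertical part of $\Delta$, and importing the characteristic-$p$ canonical bundle formula for elliptic fibrations in the genus-one case — are routine or available in the literature, and none of them affects the final $\mbQ$-linear equivalence class of $L_Z$.
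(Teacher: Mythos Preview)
Your approach is essentially the paper's: reduce to a flat, smooth-base situation over a big open of $Z$, split into the genus-$0$ and genus-$1$ cases, in genus $0$ perform a finite base change so that the horizontal components of $\Delta$ become sections, invoke Lemma \ref{lem:cbf_standard}, and push forward via $\tfrac{1}{\deg f}f_*$.

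Two remarks on the execution. First, your discussion of the coefficient bound for $\Gamma^h$ is slightly off: since $X_{\bar\mu}$ is smooth, the generic fibre $X_\mu$ is smooth over $K(Z)$, so $X_\mu\times_{K(Z)}K(T)$ is already normal and the conductor $D$ is \emph{vertical}. Thus $\Gamma^h=(h^*\Delta)^h$, and the bound follows because the restriction of $\Gamma$ to the geometric generic fibre of $\psi$ coincides with $\Delta|_{X_{\bar\mu}}$, which has coefficients at most $1$ by hypothesis. This is what the paper means by ``log canonical is stable under base change''; your multiplicity computation $a_ip^{k_i}\le 1$ is correct, but the conductor plays no role. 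Second, in the elliptic case the paper avoids any base change: it quotes \cite[Subsection 3.1 and Claim 3.1]{cz15} to get $\kappa(K_{X/Z})\ge 0$ directly, writes $K_X+\Delta=\phi^*K_Z+E+\Delta$ with $E\sim_{\mbQ}K_{X/Z}$ effective, and concludes by the Zariski lemma that the vertical divisor $E+\Delta$ is pulled back from $Z$. Your route (base-changing to acquire a section and invoking a characteristic-$p$ Kodaira-type formula) would also work, but is less direct and the reference you would need is in any case \cite{cz15}.
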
 
If we only require $\Delta_Z$ to be pseudo-effective, then the proposition follows from \cite[Theorem 4.5]{ejiri16a} (cf.\ \cite[Corollary 4.11]{ejiri16a}). Note that in characteristic zero, if $(X,\Delta)$ is log canonical, then so is $(X_{\bar{\mu}}, \Delta|_{X_{\bar{\mu}}})$, but this need not be true in positive characteristic due to the existence of inseparable morphisms.
\begin{proof}
By blowing up along components of $\Sing(X)$ of codimension two in $X$ which are not contracted by $\phi$, we can obtain a birational morphism  $g \colon Y \to X$ such that $g^*(K_X+\Delta) = K_Y + \Delta_Y$ for a $\mbQ$-divisor $\Delta_Y$ satisfying
\[
\codim_Z\left(\phi \circ g\left( \Sing Y \cup \Supp \left(\Delta_Y\right)_{<0}\right)\right) \geq 2,
\]
where $(\Delta_Y)_{<0}$ is the negative part of $\Delta_Y$. Indeed, consider the localisations $X_{\zeta_i}$ of $X$ at the generic points of the components of $\Sing(X)$ described above. Since $X_{\zeta_i}$ is of dimension two, we can construct its minimal resolution by a series of blow-ups. Extending the blown-up loci to $X$ and taking normalisation provides us with $Y$ as above. We replace $X$ by $Y$ and $\Delta$ by $\Delta_Y$. Further, we replace $Z$ by a smooth open subscheme $U \subseteq Z$ whose complement is of codimension two, and $X$ by $\phi^{-1}(U)$. Thus, we can assume that $\phi$ is flat, $\Delta \geq 0$, and $X$, $Z$ are smooth.

Since $(X_{\overline \mu}, \Delta|_{\overline \mu})$ is log Calabi-Yau, $X_{\overline \mu}$ is either a smooth rational curve or an elliptic curve. In the latter case, $\kappa(K_{X/Z})\geq 0$ by \cite[Subsection 3.1 and Claim 3.1]{cz15}. Thus, we can write $K_X + \Delta = \phi^*K_Z + E + \Delta$ for some effective $\mbQ$-divisor $E \sim_{\mbQ} K_{X/Z}$, and by the Zariski lemma (see \cite[Lemma 1.5]{fujita86b}), we get that $E + \Delta = \phi^*\Delta_Z$ for some effective $\mbQ$-divisor $\Delta_Z$ on $Z$. This finishes the proof when $X_{\overline \mu}$ is an elliptic curve.

As of now, assume that $X_{\overline \mu} \simeq \mbP^1$, and consider the following Cartesian diagram:
\begin{center}
\begin{tikzcd}
X_T \arrow{r}{\psi} \arrow{d}{\phi_T} & X \arrow{d}{\phi} \\
T \arrow{r}{f} & Z,
\end{tikzcd}
\end{center}
where $f \colon T \to Z$ is any finite morphism with $T$ normal which generically factors through $\phi|_{D_i} \colon D_i \to Z$ for every irreducible divisor $D_i \subseteq \Supp \Delta^h$ (for instance, set $T$ to be the normal closure of $Z$ in all the fraction fields $K(D_i)$), and $X_T$ is the normalisation of $X \times_Z T$. By replacing $Z$ by a smaller open subscheme with complement of codimension two, we can assume that $T$ is smooth. Then Proposition \ref{proposition:relative_canonical_divisor_under_base_change} implies that
\[
\psi^*(K_X+\Delta) \sim_{\mbQ} K_{X_T/T} + \Delta_T + \phi_T^*f^*K_Z
\]
for some effective $\mbQ$-divisor $\Delta_T$. Since the condition that $(X_{\bar{\mu}}, \Delta|_{\bar{\mu}})$ is log canonical is stable under base change, the coefficients of the horizontal part $\Delta_T^h$ are at most one. Moreover $\Supp \Delta_T^h$ is a union of sections of $\phi_T$ over the generic point of $T$. Thus by Lemma \ref{lem:cbf_standard} we have
\[
\psi^*(K_X+\Delta) \sim_{\mbQ}  \phi_T^*(f^*K_Z + E),
\]
where $E$ is some effective $\mbQ$-divisor on $T$. Therefore,
\[
L_Z \sim_{\mbQ} K_Z + \frac{1}{\deg f}f_*E. \qedhere
\]

\end{proof}
%

\subsection{The proof of the weak canonical bundle formula}

In the proof of Theorem \ref{theorem:canonical_bundle_formula_main}, we shall need the following lemma.

\begin{lemma}[Purely inseparable relative adjunction] \label{lemma:inseparable_adjunction} Let $X$ be a regular quasi-projective variety defined over a field $k$ of characteristic $p>0$, let $\phi \colon X \to Z$ be a contraction whose generic fibre is a smooth curve, let $S \subseteq X$ be a divisor such that $\phi|_S \colon S \to Z$ is purely inseparable of degree $p^k$, and let $\tilde{S}$ be the normalisation of $S$. Then,
\[
\big(K_{X/Z} + \frac{1}{p^k}S\big)|_{\tilde S} \sim_{\mbQ} \Delta,
\]
for some effective $\mbQ$-divisor $\Delta$ on $\tilde S$.
\end{lemma}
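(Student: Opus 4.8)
The statement is a relative-adjunction computation along a purely inseparable map, so the natural strategy is to base-change to $S$ (or rather its normalisation $\tilde S$), apply Proposition~\ref{proposition:relative_canonical_divisor_under_base_change} to control $K_{X/Z}$, and then identify the pullback of $\frac{1}{p^k}S$ via the conductor of the normalisation. First I would set up the Cartesian square: let $f = \phi|_{\tilde S}\colon \tilde S \to Z$, which is generically finite purely inseparable of degree $p^k$, form $X' \defeq X \times_Z \tilde S$, and let $\pi\colon Y \to X'$ be its normalisation, with $g\colon X' \to X$ and $h = g\circ\pi\colon Y\to X$. After shrinking $Z$ to a smooth open subset whose complement has codimension $\geq 2$ (effective $\mbQ$-divisors extend across such loci, and both sides of the claimed $\mbQ$-linear equivalence are insensitive to this), I may assume $X$, $Z$, $\tilde S$ all smooth and $\phi$ flat with one-dimensional fibres, so that Remark~\ref{rem:reduced} guarantees $X'$ is reduced (the generic fibre of $\phi$ is a smooth curve, and a smooth curve stays reduced after the base field extension $K(Z)\subseteq K(\tilde S)$), and Proposition~\ref{proposition:relative_canonical_divisor_under_base_change} applies.

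The key observation is that $\tilde S$ sits inside $Y$ as a \emph{section} of $\phi'\colon Y \to \tilde S$. Indeed, $S\times_Z \tilde S \to \tilde S$ has the diagonal-type section coming from $S \hookrightarrow X$ and $\tilde S \to S$; passing to $X'$ and then normalising, the strict transform $\tilde S_Y$ of this section is a horizontal prime divisor on $Y$ isomorphic to $\tilde S$ over $\tilde S$, and it is contained in the conductor $\mcD$ of $\pi$ — this is the inseparable analogue of the fact that the preimage of a ramification divisor is non-reduced, forcing the normalisation to separate branches along it. By Proposition~\ref{proposition:relative_canonical_divisor_under_base_change}, $h^*K_{X/Z} = K_{Y/\tilde S} + D$ with $D$ effective and supported on $\mcD$; restricting to the section $\tilde S_Y \cong \tilde S$ gives $h^*K_{X/Z}|_{\tilde S} = K_{Y/\tilde S}|_{\tilde S_Y} + D|_{\tilde S_Y}$, and $K_{Y/\tilde S}|_{\tilde S_Y} \sim_{\mbQ} 0$ since $\tilde S_Y$ is a section of the equidimensional morphism $\phi'$ (adjunction for a section: $K_{Y/\tilde S}|_{\tilde S_Y}$ is the relative canonical of a degree-one map). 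Thus $K_{X/Z}|_{\tilde S} \sim_{\mbQ} D|_{\tilde S_Y}$, an effective $\mbQ$-divisor. It then remains to compare $D|_{\tilde S_Y}$ with $-\frac{1}{p^k}S|_{\tilde S}$: one computes $g^*S$ as the non-reduced fibre-type divisor $S\times_Z\tilde S$, whose reduction pulls back on $Y$ to $\tilde S_Y$ plus possibly other components with multiplicity $p^k$ in $g^*S$; keeping track of how the conductor absorbs the non-reduced structure, one gets $h^*S = p^k \tilde S_Y + (\text{other terms})$ and correspondingly the coefficient bookkeeping yields $(K_{X/Z} + \frac{1}{p^k}S)|_{\tilde S} \sim_{\mbQ} (\text{effective})$, possibly after discarding an effective contribution supported where $\phi|_S$ has extra ramification.

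Alternatively — and this may be cleaner — one works with the sub-log pair $(X, \frac{1}{p^k}S)$ directly: since $\phi|_S$ is purely inseparable of degree $p^k$, the divisor $\frac1{p^k}S$ has horizontal coefficient $\frac1{p^k}$, and after base change via $f$ the horizontal part of the pulled-back boundary becomes a section with coefficient $1$ minus a correction from the conductor, so that $\psi^*(K_X + \frac1{p^k}S) - \phi_T^*f^*K_Z$ restricts on $\tilde S_Y$ to something $\mbQ$-linearly equivalent to an effective divisor by the conductor inequality in Proposition~\ref{proposition:relative_canonical_divisor_under_base_change}. I expect \textbf{the main obstacle} to be the precise bookkeeping of the coefficient $\frac1{p^k}$ against the conductor: one must show that the effective divisor $D$ from Proposition~\ref{proposition:relative_canonical_divisor_under_base_change}, restricted to the section, dominates the negative contribution $-\frac1{p^k}S|_{\tilde S}$ rather than merely $-S|_{\tilde S}$ — equivalently, that the conductor of $\pi$ along the horizontal component has multiplicity at least $p^k - 1$ there, which should follow from the structure of the length-$p^k$ non-reduced fibre of $X\times_Z\tilde S \to \tilde S$ over the generic point (a "thickened section" of length $p^k$) together with Reid's adjunction formula for the conductor, but requires care since $\phi|_S$ may be wildly ramified. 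The elliptic/genus-one subtlety does not arise here because the generic fibre is assumed to be a curve admitting a horizontal divisor $S$ with $\phi|_S$ inseparable, which in the application forces genus zero, but the lemma as stated should not need that — the argument above only uses that the generic fibre is a smooth curve and $\tilde S_Y$ is a section.
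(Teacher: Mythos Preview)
Your overall setup --- base change along $f = \phi|_{\tilde S}$, identify the section $\tilde S_Y \subset Y$, apply Proposition~\ref{proposition:relative_canonical_divisor_under_base_change}, then adjoin --- is exactly what the paper does. But two concrete errors derail your bookkeeping and create the ``main obstacle'' you worry about, which in fact does not exist.

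First, the section $\tilde S_Y$ is \emph{not} contained in the conductor of $\pi\colon Y \to X'$. Since the generic fibre $X_\mu$ of $\phi$ is a \emph{smooth} curve over $K(Z)$, the base change $X_\mu \times_{K(Z)} K(\tilde S)$ is still smooth, hence $X'$ is already normal over the generic point of $\tilde S$. The conductor $D$ in $h^*K_{X/Z} = K_{Y/\tilde S} + D$ is therefore purely \emph{vertical}. Your intuition that ``the preimage of $S$ is non-reduced, forcing the normalisation to separate branches'' confuses non-reducedness of the closed subscheme $S \times_Z \tilde S \subset X'$ with non-normality of $X'$ itself along it; the ambient $X'$ is perfectly normal there.

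Second, your claim $K_{Y/\tilde S}|_{\tilde S_Y} \sim_{\mbQ} 0$ is wrong: adjunction gives $(K_{Y/\tilde S} + \tilde S_Y)|_{\tilde S_Y} = \mathrm{Diff} \geq 0$, not $K_{Y/\tilde S}|_{\tilde S_Y} \sim 0$. The self-intersection $\tilde S_Y|_{\tilde S_Y}$ is uncontrolled, so you cannot drop the $+\tilde S_Y$.

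The paper's fix resolves both issues in one stroke. Since $K(\tilde S) \otimes_{K(Z)} K(\tilde S)$ is local Artinian of length $p^k$, one has $h^*S = p^k \tilde S_Y + (\text{effective vertical})$, hence
\[
h^*\Big(K_{X/Z} + \tfrac{1}{p^k}S\Big) = K_{Y/\tilde S} + \tilde S_Y + D
\]
with $D$ effective and $\phi'$-vertical. Now standard adjunction on the section $\tilde S_Y$ gives $(K_{Y/\tilde S} + \tilde S_Y)|_{\tilde S_Y} \geq 0$, and $D|_{\tilde S_Y} \geq 0$ since $\tilde S_Y$ is horizontal. Done. The coefficient $\frac{1}{p^k}$ is chosen precisely so that the pullback of $\frac{1}{p^k}S$ contributes the section with coefficient exactly $1$, enabling adjunction --- there is no conductor-multiplicity estimate to prove. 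Your ``alternative'' paragraph almost lands on this (``after base change \ldots\ the horizontal part of the pulled-back boundary becomes a section with coefficient $1$''), but then retreats to the conductor worry; that sentence, stripped of the ``minus a correction'' clause, \emph{is} the proof.
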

In fact, the lemma holds if $X$ is only $\mbQ$-Gorenstein in codimension two (see the beginning of the proof of Proposition \ref{proposition:canonical_bundle_formula}). By means of localisation at codimension two points, one can see that all klt varieties satisfy this condition.

\begin{proof}
We can assume that $Z$ is regular and $\phi$ is flat by replacing $Z$ by an open regular subset $U \subseteq Z$ whose complement is of codimension two, and $X$ by $\phi^{-1}(U)$. To ease the notation, write $T \defeq \tilde S$ and $f \defeq \phi|_{\tilde S}$. 

Consider the following diagram
\begin{center}
\begin{tikzcd}
 Y \arrow{r}{g} \arrow{d}{\phi'} & X \arrow{d}{\phi} \\
T \arrow{r}{f} & Z,
\end{tikzcd}
\end{center}   
where $Y$ is the normalisation of $X \times_Z T$. The construction yields a natural induced-by-$f$ section $T \to Y$ whose image by abuse of notation we denote by $T$. 

 By Proposition \ref{proposition:relative_canonical_divisor_under_base_change}, there exists an effective $\phi'$-vertical divisor $D$ such that
\[
g^*(K_{X/Z} + \frac{1}{p^k}S)= K_{Y/T} + T + D. 
\]
The lemma follows by the standard adjunction on $T$. 
\end{proof}

We are ready to prove the main theorem of this section.
\begin{theorem}[{Theorem \ref{theorem:canonical_bundle_formula_main_intro}}] \label{theorem:canonical_bundle_formula_main} Let $(X,\Delta)$ be a quasi-projective log pair defined over an algebraically closed field $k$ of characteristic $p>0$ and let $\phi \colon X \to Z$ be a contraction with the generic fibre being a smooth curve. Assume that the $\phi$-horizontal divisors in $\Delta$ have coefficients at most one, and $K_X+\Delta \sim_{\mbQ} \phi^* L_Z$ for some $\mbQ$-Cartier $\mbQ$-divisor $L_Z$ on $Z$.

Then, there exist a finite purely-inseparable morphism $f \colon T \to Z$ and an effective $\mbQ$-divisor $E$ on $T$, such that
\[
f^*L_Z \sim_{\mbQ} tK_{T}  + (1-t)f^*K_Z + E,
\]
for some rational number $0 \leq t \leq 1$.

Furthermore, if $\Delta = B + A$ for an effective $\mbQ$-divisor $B$ and an ample $\mbQ$-Cartier $\mbQ$-divisor $A$, then $E$ is big.
\end{theorem}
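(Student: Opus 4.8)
The plan is to split the argument according to whether the pair on the geometric generic fibre is log canonical; since the generic fibre is assumed \emph{smooth}, $X_{\bar\mu}$ is a smooth (hence reduced) curve. First I would carry out the usual reductions — blowing up the codimension-two components of $\Sing(X)$ not contracted by $\phi$ and shrinking $Z$ by a closed set of codimension two — so that $X$ becomes regular along the codimension-one points of every horizontal divisor, $\phi$ is flat, and, the new exceptional divisors being vertical over a codimension-$\geq2$ subset, the coefficients of $\Delta^h$ are unchanged and $\Delta$ stays effective. Restricting $K_X+\Delta\sim_{\mbQ}\phi^*L_Z$ to a general fibre $F$ and using adjunction together with $\Delta|_F\geq0$ forces $F$ to have genus $0$ or $1$. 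If $F$ is elliptic then $\Delta^h=0$, and $\kappa(K_{X/Z})\geq0$ (\cite{cz15}) together with Zariski's lemma (\cite[Lemma 1.5]{fujita86b}) gives $L_Z\sim_{\mbQ}K_Z+\Delta_Z$ with $\Delta_Z\geq0$; this settles the elliptic case with $f=\mathrm{id}$, $t=1$, $E=\Delta_Z$, a situation which cannot occur when $\Delta=B+A$. So I may assume $X_{\bar\mu}\simeq\mbP^1$; and if moreover $(X_{\bar\mu},\Delta|_{X_{\bar\mu}})$ is log canonical, Proposition \ref{proposition:canonical_bundle_formula} again yields the conclusion with $f=\mathrm{id}$, $t=1$, $E=\Delta_Z$.

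The remaining, main case is that $(X_{\bar\mu},\Delta|_{X_{\bar\mu}})$ is \emph{not} log canonical, and here the heart of the matter is a structural claim: then $\Delta=aT+B'$, where $T\subseteq X$ is an irreducible horizontal divisor with $\phi|_T$ generically purely inseparable of degree $p^k$, where $B':=\Delta-aT\geq0$ with $T\not\subseteq\Supp B'$, and where $\tfrac1{p^k}<a\leq1$. To prove it, write $\Delta^h=\sum_i a_iD_i$ and observe that for a horizontal prime $D_i$, since $D_i\cap X_{\bar\mu}$ is the geometric generic fibre of $D_i\to Z$ and distinct $D_i$'s do not meet over the generic point of $Z$, the restriction $D_i|_{X_{\bar\mu}}$ equals $p^{k_i}$ times a sum of $m_i$ distinct reduced points, with $m_i$ and $p^{k_i}$ the separable and inseparable degrees of $\phi|_{D_i}$. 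Thus failure of log canonicity means some $a_ip^{k_i}>1$; but $D_i\cdot F=m_ip^{k_i}$ and $\sum_i a_im_ip^{k_i}=\deg\Delta^h|_F=-\deg K_{\mbP^1}=2$, so there is exactly one such $D_i=:T$, and the inequality $a_Tm_Tp^{k_T}\leq2$ then forces $m_T=1$, whence $\phi|_T$ is purely inseparable of degree $p^k:=p^{k_T}$ and $a:=a_T\in(1/p^k,1]$ (the upper bound being the hypothesis on horizontal coefficients).

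Given the claim, I would pick $t_s\in[0,1)$ with $a=\tfrac1{p^k}t_s+(1-t_s)$ and write
\[
K_X+\Delta=t_s\bigl(K_X+\tfrac1{p^k}T\bigr)+(1-t_s)(K_X+T)+B'.
\]
Let $\tilde T\to T$ be the normalisation and $f_0:=\phi|_{\tilde T}$. Lemma \ref{lemma:inseparable_adjunction} — applicable since $X$ is regular along the codimension-one points of $T$ — gives $\bigl(K_X+\tfrac1{p^k}T\bigr)|_{\tilde T}\sim_{\mbQ}f_0^*K_Z+G_1$ with $G_1\geq0$; adjunction along $T$ gives $(K_X+T)|_{\tilde T}=K_{\tilde T}+G_2$ with the (effective) different $G_2\geq0$; and $B'|_{\tilde T}\geq0$ because $T\not\subseteq\Supp B'$. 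Restricting the displayed identity to $\tilde T$ and putting $t:=1-t_s\in(0,1]$ yields
\[
f_0^*L_Z\sim_{\mbQ}(1-t)K_{\tilde T}+t\,f_0^*K_Z+E,\qquad E:=t_sG_1+(1-t_s)G_2+B'|_{\tilde T}\geq0.
\]
Since $\phi|_{\tilde T}$ need not be finite, I would then replace $\tilde T$ by the normalisation $T^{\flat}$ of $Z$ in $K(T)$ — which is finite over $Z$ and purely inseparable of degree $p^k$ — and push the $\mbQ$-linear equivalence forward along the induced proper birational morphism $\tilde T\to T^{\flat}$ (this only enlarges the effective divisor $E$), obtaining the first assertion with $f\colon T^{\flat}\to Z$. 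I should also keep track of the reduction steps, in which $X$ was replaced by a modification: the argument there is the standard one, resolving the codimension-two singularities so that $K_X$ is Cartier where it needs to be.

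For the bigness statement, assume $\Delta=B+A$ with $A$ ample. The ampleness of $A$ feeds positivity into $E$: in the non-log-canonical case $A|_{\tilde T}$ is ample on $\tilde T$ (as $\tilde T\to T\hookrightarrow X$ is finite), and tracking $\Delta=aT+B'=B+A$ through the computation above writes $E$ as $A|_{\tilde T}$ plus the analogue of $E$ for the sub-pair $(X,B)$, so one is reduced to pseudo-effectivity of the latter; in the log canonical and elliptic cases one instead shows directly that $\Delta_Z$ is big. In both cases I expect this to come down to a Riemann--Hurwitz-type computation on the horizontal support of $\Delta$ (or on a suitably chosen multisection of $\phi$), exploiting that $A$ restricts there to a big $\mbQ$-divisor. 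I would expect the \emph{structural claim of the second paragraph to be the main obstacle} — pinning down that a non-log-canonical smooth geometric generic fibre forces $\Delta$ into the shape $aT+B'$ with $\phi|_T$ purely inseparable, and doing so at the level of $\mbQ$-divisors over codimension-two points — with the positivity bookkeeping behind the bigness statement a secondary difficulty.
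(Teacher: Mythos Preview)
Your argument for the main assertion is essentially the paper's own: the same codimension-two reductions, the same dichotomy via Proposition~\ref{proposition:canonical_bundle_formula}, the same structural claim (your proof of which is in fact a slightly more explicit version of the paper's degree count on a general fibre), and the same splitting $K_X+\Delta = t_s(K_X+\tfrac{1}{p^k}T)+(1-t_s)(K_X+T)+B'$ combined with Lemma~\ref{lemma:inseparable_adjunction} and adjunction. The paper achieves finiteness of $T\to Z$ by taking the Stein factorisation of $\phi|_{\tilde S}$ and then shrinking $Z$ in codimension two so that the birational part becomes an isomorphism; your pushforward to the normalisation of $Z$ in $K(T)$ is equivalent.

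The bigness argument, however, has a gap. Your plan is to track $A|_{\tilde T}$ directly into $E$, but $A$ is a \emph{fixed} ample $\mbQ$-divisor, and nothing prevents $T\subseteq\Supp A$; in that case $A|_{\tilde T}$ is not even defined, and the portion of $A$ away from $T$ need not be ample. In the log-canonical branch you only offer a hope (``a Riemann--Hurwitz-type computation''), which is not a proof. The paper's device is short and uniform: write $A\sim_{\mbQ}A'+\phi^*A_Z$ with $A'$ and $A_Z$ sufficiently general ample $\mbQ$-divisors (so in particular $T\not\subseteq\Supp A'$ and the horizontal coefficients of $B+A'$ are still at most one), and rerun the entire proof with $\Delta$ replaced by $B+A'$. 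This yields $f^*(L_Z-A_Z)\sim_{\mbQ} tK_T+(1-t)f^*K_Z+E'$ with $E'\geq 0$, hence $E:=E'+f^*A_Z$ is big since $f$ is finite. This single line covers both the log-canonical and the non-log-canonical branches, and avoids the support issue entirely.
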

If $p>3$, or $\Delta$ is big and $p>2$, then the generic fibre is automatically smooth by Proposition \ref{proposition:general_fibre_smooth}.
\begin{proof}
By the same argument as in Proposition \ref{proposition:canonical_bundle_formula} we can blow up along components of $\Sing(X)$ of codimension two in $X$ which are not contracted by $\phi$, and obtain a birational morphism  $g \colon Y \to X$ such that $g^*(K_X+\Delta) = K_Y + \Delta_Y$ for a $\mbQ$-divisor $\Delta_Y$ satisfying
\[
\codim_Z\left(\phi \circ g\left( \Sing Y \cup \Supp \left(\Delta_Y\right)_{<0}\right)\right) \geq 2,
\]
where $(\Delta_Y)_{<0}$ is the negative part of $\Delta_Y$. 
We replace $X$ by $Y$ and $\Delta$ by $\Delta_Y$ so that we have $\codim_Z(\phi\,(\Sing X \cup \Supp \Delta_{<0}))\geq 2$. 

Let $L \defeq K_X+\Delta$ and let $F$ be a general fibre of $\phi$. Note that since $(K_X+\Delta)\cdot F=0$, we have $\Delta \cdot F \leq 2$. If $(F,\Delta|_{F})$ is log canonical, then the theorem follows from Proposition \ref{proposition:canonical_bundle_formula} with $f=\mathrm{id}$. Thus, we can assume that it is not log canonical, and so we can write $\Delta = aS + B$ for $0 < a \leq 1$, an irreducible divisor $S$, and an effective $\mbQ$-divisor $B$ such that $S \not \subseteq \Supp B$ and
\[
(F, aS|_{F})
\]
is not log canonical. We claim that $\phi|_S$ is generically purely inseparable. To this end, write $S|_F = \sum_{i=1}^r p^kP_i$ for some distinct points $P_i \in F$, where $p^k$ is the degree of the purely inseparable part of the extension $K(S)/K(Z)$. Since $aS \cdot F \leq 2$, we have $arp^k\leq 2$, and since $(F, aS|_F)$ is not log canonical, $ap^k > 1$ holds. Therefore, $r=1$ and $\phi|_S$ is generically purely inseparable of degree $p^k>1$. Further, $\frac{1}{p^k} < a \leq \frac{2}{p^k}$.

Let $\tilde{S}$ be the normalisation of $S$  and let
\begin{center}
\begin{tikzcd}
\tilde{S} \arrow{r}{g} \arrow[bend left = 35]{rr}{h} & T \arrow{r}{f} & Z.
\end{tikzcd}
\end{center}
be the Stein factorisation of $h \defeq \phi|_{\tilde S}$, wherein $g$ is birational. Since divisors uniquely extend in codimension one and $\codim_Z(\phi\,(\Sing X \cup \Delta_{<0}))\geq 2$, we can assume that $g$ is an isomorphism, $\Delta \geq 0$ and $X$, $Z$ are smooth by replacing $Z$ by an open smooth subset $U \subseteq Z$ whose complement is of codimension two and $X$, $S$, $\tilde{S}$ by $\phi^{-1}(U)$, $\phi^{-1}(U) \cap S$, and $h^{-1}(U)$, respectively. 

Recall that $\Delta = aS + B$, take $0 \leq t < 1$ to be a rational number such that $a = \frac{1}{p^k}t + (1-t)$, and write
\[
L = t(K_X + \frac{1}{p^k}S) + (1-t)(K_X + S) + B.
\]
By Lemma \ref{lemma:inseparable_adjunction} and the standard adjunction we get
\[
h^*L_Z \sim_{\mbQ} L|_{\tilde S} \sim_{\mbQ} th^*K_Z + (1-t)K_{\tilde S} +  E_{\tilde S}
\]
for some effective $\mbQ$-divisor $E_{\tilde S}$. Since $g$ is an isomorphism, this finishes the proof of the main part of the theorem. 

Last, if $\Delta = B + A$ as in the statement of the theorem, then $A \sim_{\mbQ} A' + \phi^*A_Z$ for some sufficiently general ample $\mbQ$-Cartier $\mbQ$-divisors $A'$ and $A_Z$ on $X$ and $Z$, respectively. By applying the above proof to $\Delta$ replaced by $B + A'$, we can conclude.
\end{proof}

\begin{example} \label{ex:cbf_false} It is easy to see that the canonical bundle formula is false in positive characteristic even for smooth $\mbP^1$-fibrations from surfaces to curves (cf.\ \cite[V, Exercise 2.15]{hartshorne77}). Here, we give an example that in Theorem \ref{theorem:canonical_bundle_formula_main} it is moreover not enough to take $f$ to be a power of Frobenius. 

Let $S$ be a smooth projective surface defined over an algebraically closed field of characteristic two such that $K_S$ is ample, $H^1(S, \omega_S^{-1}) \neq 0$, but $H^1(S, \omega_S^{-2}) =0$. Such a surface exists by \cite[Proposition I.2.14 and Theorem II.1.7]{ekedahl88}. Let $\mcE$ be a vector bundle constructed as a non-trivial extension of $\omega_S$ by $\mcO_S$ and let $X \defeq \mbP_S(\mcE)$. By assumptions, $F^*\mcE$ splits providing a divisor $D$ on $X$ such that $\pi|_D$ is purely inseparable of degree two, where $\pi \colon X \to S$ is the natural projection.

We claim that $L \defeq K_X + D$ is numerically trivial. Since $S$ is of general type, the claim shows that $L$ cannot descend to a $\mbQ$-divisor of the form $\lambda K_S + P$, where $\lambda>0$ and $P$ is pseudo-effective. For the assertion of Theorem \ref{theorem:canonical_bundle_formula_main} to hold one takes $f$ to be a purely-inseparable cover of $S$ by $D$.

In order to show that $L$ is numerically trivial, we consider the following Cartesian diagram
\begin{center}
\begin{tikzcd}
X' \arrow{r}{\phi} \arrow{d}{\rho} & X \arrow{d}{\pi} \\
S \arrow{r}{F} & S,
\end{tikzcd}
\end{center}
where $X' \defeq \mbP_S(F^*\mcE)$. Since $\pi$ is smooth, so is $\rho$. In particular,
\[
\phi^*L = \phi^*K_{X/S} + \rho^*F^*K_S + 2S = K_{X'} + \rho^*K_S + 2S,
\]
wherein $S \subseteq X'$ is a section of $\rho$ such that $S|_S \sim -2K_S$. Thus,
\[
\phi^*L|_{S} \sim 0,
\]  
and since $\pi$ is a $\mbP^1$-fibration and $L$ is $\pi$-numerically trivial, it must be numerically trivial.
\end{example}

\begin{remark} Theorem \ref{theorem:canonical_bundle_formula_main} and Proposition \ref{proposition:canonical_bundle_formula} should work over any field $k$ of positive characteristic $p>0$ (non necessarily algebraically closed) contingent upon the verification that the results in \cite[Subsection 3.1 and Claim 3.1]{cz15} are valid over $k$.
\end{remark}

\subsection{Low dimensions}
In this subsection, we show Theorem~\ref{theorem:canonical_bundle_formula_3fold} (cf.\ Theorem~\ref{theorem:canonical_bundle_formula_3fold_intro}). The following lemma is a key component in its proof.
\begin{lemma} \label{lemma:pseudoeffective_surfaces} Let $X$ be a normal projective $n$-dimensional variety defined over an algebraically closed  field $k$ of characteristic $p>0$ and let $L$ be a nef $\mbQ$-Cartier $\mbQ$-divisor on $X$ such that $n(L)=\dim X$. Assume that $n =2$, or $p > 5$ and $n=3$. Then, there exists a rational number $\lambda > 0$ for which $K_X  + \lambda L$ is pseudo-effective.
\end{lemma}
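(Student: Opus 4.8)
The plan is to reduce to $X$ smooth and then run a minimal model program whose output makes $K_X+\lambda L$ visibly pseudo-effective, the hypothesis $n(L)=n$ being exactly what prevents the program from degenerating.

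First I would replace $X$ by a resolution $\pi\colon \widetilde X\to X$. As $L$ is $\mathbb{Q}$-Cartier, $\pi^*L$ is nef and $\mathbb{Q}$-Cartier with $n(\pi^*L)=n(L)=n$, since a curve through a very general point of $\widetilde X$ is not $\pi$-exceptional and maps birationally to a curve through a very general point of $X$; and if $K_{\widetilde X}+\lambda\pi^*L$ is pseudo-effective, so is its pushforward $K_X+\lambda L$, because the image of a pseudo-effective divisor under a proper birational morphism is pseudo-effective. Hence we may assume $X$ is smooth. If $K_X$ is already pseudo-effective we are done with $\lambda=1$, since $K_X+L$ is then a sum of a pseudo-effective and a nef divisor.

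So assume $K_X$ is not pseudo-effective. I would run the $K_X$-MMP with scaling of $L$, which exists and terminates in our range: for $n=2$ over any $k$, and for $n=3$ when $p>5$ (see \cite{hx13}, \cite{bw14}). The one novelty is that $L$ is only nef, so the scaling threshold $\lambda_i=\inf\{t\ge 0: K_{X_i}+tL_i\ \text{nef}\}$ at a stage $X_i$ may be infinite; by the cone theorem this happens exactly when some $K_{X_i}$-negative extremal ray $R$ has $L_i\cdot R=0$. Such an $R$ cannot be of fibre type: the fibres of its contraction cover $X_i$ and are themselves covered by curves $C$ with $L_i\cdot C=0$, which would force $n(L_i)<n$, whereas $n(L_i)=n$ holds because every step performed while $\lambda_i=\infty$ is crepant for $L$ (the contracted or flipped ray is $L$-trivial) and hence preserves the nef dimension of the strict transform of $L$, which moreover stays nef. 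Thus whenever $\lambda_i=\infty$ there is an $L_i$-trivial $K_{X_i}$-negative extremal ray of birational type, which I would contract or flip and continue. This modified program terminates; since $K_X$ is not pseudo-effective it cannot stop at a minimal model, so it ends at a Mori fibre space $g\colon X'\to Y$ reached by a genuine scaling step (no Mori fibre space occurs during the $L$-trivial phase, by the above), at which $K_{X'}+\lambda L'$ is nef — in particular pseudo-effective — for the terminal scaling value $\lambda>0$, where $L'$ is the strict transform of $L$.

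It remains to transfer pseudo-effectivity back to $X$. Every step of the program is $(K_X+\lambda L)$-non-positive for this terminal $\lambda$ (the $L$-trivial steps are $(K_X+tL)$-non-positive for all $t$, and the scaling steps for all $t$ not exceeding the relevant scaling value, which is $\ge\lambda$), so on a common resolution of $X\dashrightarrow X'$ one has $p^*(K_X+\lambda L)=q^*(K_{X'}+\lambda L')+E$ with $E\ge 0$ by the negativity lemma; therefore $p^*(K_X+\lambda L)$, and hence $K_X+\lambda L$, is pseudo-effective, which together with the first paragraph proves the lemma. I expect the main obstacle to be the third paragraph — running an MMP with scaling by a merely nef divisor and pinning down the precise role of $n(L)=n$ in keeping it from stalling or from ending on a Mori fibre space where $K_{X'}+\lambda L'$ is not nef; the remaining ingredients (resolution, the behaviour of the nef dimension and of pseudo-effectivity under birational maps, the negativity-lemma computation) are routine. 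One could alternatively run the ordinary $K_X$-MMP to a Mori fibre space $g\colon X'\to Y$, use $n(L)=n$ to obtain $L'\equiv_Y\mu(-K_{X'})$ with $\mu>0$, and descend $K_{X'}+\tfrac1\mu L'$ to $Y$, invoking Theorem~\ref{theorem:canonical_bundle_formula_main} in relative dimension one and the classification of surfaces on the (at most two-dimensional) base — at the cost of more case analysis.
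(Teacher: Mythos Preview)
Your proposal is correct and follows essentially the same strategy as the paper: pass to a terminal resolution, run a $K_X$-MMP contracting only the $L$-trivial $K_X$-negative extremal rays (which are birational precisely because $n(L)=\dim X$), and once no such rays remain use the cone-theoretic bound on lengths of extremal rays to see that some $K_X+\lambda L$ is nef, then pull back. The paper phrases this as an induction and gives the explicit value $\lambda=2nm$ (with $m$ the Cartier index of $L$ on the final model), stopping as soon as all $K_X$-negative rays are $L$-positive; your continuation past that point to a Mori fibre space via further scaling steps is harmless but unnecessary, since the nefness of $K_{X_j}+\lambda_j L_j$ is already secured at the moment the $L$-trivial phase terminates.
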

The lemma is not true when $n(L) < \dim X$. We refer to \cite{tanaka12} for the main results of the Minimal Model Program for surfaces in positive characteristic, and to \cite{hx13}, \cite{ctx13}, \cite{birkar13}, and \cite{bw14} in the case of threefolds.

\begin{proof}
Let $\pi \colon \overline{X} \to X$ be a resolution of singularities. Assume that there exists $\lambda > 0$ such that $K_{\overline{X}} + \lambda\pi^*L$ is pseudo-effective. Then, $K_X + \lambda L$ is pseudo-effective as well. Thus, by replacing $X$ by $\overline{X}$, we can assume that $X$ is terminal and $\mbQ$-factorial. We do not assume that $X$ is smooth for inductive reasons. 

First, we show the lemma under an additional assumption that all $K_X$-negative extremal rays $R$ satisfy $L\cdot R > 0$. To this end, set $\lambda = 2nm$ for the Cartier index $m \in \mbN$ of $L$. It is enough to show that $K_X + \lambda L$ is nef. By contradiction, assume it is not. Then, there exists a curve $C$ such that $(K_X + \lambda L) \cdot C < 0$. Since $L \cdot C \geq 0$, we must have $K_X \cdot C < 0$. By the cone theorem, $C$ is an affine combination of $K_X$-negative extremal rays. In particular, there must exist a $K_X$-negative extremal curve $R$ satisfying $(K_X+\lambda L) \cdot R < 0$. However, by the additional assumption, $mL \cdot R \geq 1$, hence
\[
(K_X + \lambda L) \cdot R  \geq 0,
\]
since $R$ belongs to an extremal ray and so it satisfies $R \cdot K_X \geq -2n$. This is a contradiction. 

In general we run a $K_X$-MMP which contracts only those rays $R$ which satisfy $L \cdot R = 0$. Since $n(L)=\dim X$ and $X$ has terminal singularities this MMP terminates with a birational model (cf.\ \cite{hx13}). Let $f \colon X \dashrightarrow Y$ be the first step of the MMP and let $L_Y \defeq f_* L$. By induction (the base of the induction is satisfied by the above paragraph), we can assume that $P_Y \defeq K_Y + \lambda L_Y$ is pseudo-effective for some $\lambda>0$.  If $f$ is a flip, then $K_X + \lambda L = f^*P_Y$ is pseudo-effective as well. If $f$ is a contraction of a divisor $D$, then
\[
K_X + \lambda L \sim_{\mbQ} f^*P_Y + aD
\]
for some $a \in \mbQ$. Since $K_X\cdot R < 0$, we have $a > 0$. This concludes the proof of the lemma.
\end{proof}

\begin{theorem}[{Theorem \ref{theorem:canonical_bundle_formula_3fold_intro}}] \label{theorem:canonical_bundle_formula_3fold} Let $(X,\Delta)$ be a projective $n$-dimensional log pair defined over an algebraically closed field $k$ of characteristic $p>0$ and let $\phi \colon X \to Z$ be a contraction with the generic fibre being smooth and of dimension one. Suppose that the $\phi$-horizontal divisors in $\Delta$ have coefficients at most one, $K_X + \Delta$ is nef, and $K_X+\Delta \sim_{\mbQ} \phi^* L_Z$ for some $\mbQ$-Cartier $\mbQ$-divisor $L_Z$ on $Z$. Further, suppose that $n(K_X+\Delta)=n-1$ or more generally that $K_Z+\lambda L_Z$ is pseudo-effective for some $\lambda>0$. 

Assume that $n\leq 3$, or $p>5$ and $n=4$. Then, there exists a finite purely-inseparable morphism $f \colon T \to Z$, with $T$ normal, and a pseudo-effective $\mbQ$-divisor $\Delta_{T}$ on $T$ such that 
\[
f^*L_Z \sim_{\mbQ} K_{T} + \Delta_{T}. 
\]
Furthermore, if $\Delta = B + A$ for an effective $\mbQ$-divisor $B$ and an ample $\mbQ$-Cartier $\mbQ$-divisor $A$, then $\Delta_T$ is big.
\end{theorem}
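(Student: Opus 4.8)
The plan is to derive the statement from Theorem~\ref{theorem:canonical_bundle_formula_main} and Lemma~\ref{lemma:pseudoeffective_surfaces} by a short formal computation, the one new ingredient being a Frobenius-composition trick that normalises the coefficient of the canonical divisor. First I would reduce to the case in which $P \defeq K_Z + \lambda L_Z$ is pseudo-effective for some $\lambda > 0$; this is one of the two hypotheses. If instead one only assumes $n(K_X+\Delta) = n-1$, then $L_Z$ is nef (since $\phi^*L_Z = K_X+\Delta$ is nef and $\phi$ is a surjective contraction) and of maximal nef dimension $n(L_Z) = \dim Z$: indeed, pulling back the nef reduction map of $L_Z$ along $\phi$ yields a rational map from $X$ along whose general fibres $K_X+\Delta$ is numerically trivial, whence $n(L_Z) \geq n(K_X+\Delta) = n-1 = \dim Z$ by the minimality built into the nef dimension, and the reverse inequality is automatic. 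Since $\dim Z = n-1 \leq 3$, with $p>5$ when $\dim Z = 3$, Lemma~\ref{lemma:pseudoeffective_surfaces} applied to $(Z,L_Z)$ produces the desired $\lambda$ (the case $\dim Z = 1$ being trivial, as $K_Z+\lambda L_Z$ is then ample for $\lambda \gg 0$).

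Next, apply Theorem~\ref{theorem:canonical_bundle_formula_main} to obtain a finite purely-inseparable morphism $f \colon T \to Z$ with $T$ normal, an effective $\mbQ$-divisor $E$ on $T$ (big if $\Delta = B+A$ with $B$ effective and $A$ ample), and a rational number $t$ --- which, inspecting the proof of that theorem, may be taken in $(0,1]$ --- with $f^*L_Z \sim_{\mbQ} tK_T + (1-t)f^*K_Z + E$. Since $f$ is finite, $f^*P$ is again pseudo-effective and $f^*K_Z \sim_{\mbQ} f^*P - \lambda f^*L_Z$; substituting and gathering the $f^*L_Z$-terms gives
\[
\mu\, f^*L_Z \sim_{\mbQ} t\,K_T + (1-t)\,f^*P + E, \qquad \mu \defeq 1 + (1-t)\lambda \geq 1,
\]
equivalently $f^*L_Z \sim_{\mbQ} s\,K_T + N$ with $s \defeq t/\mu \in (0,1]$ and $N \defeq \tfrac{1}{\mu}\big((1-t)f^*P + E\big)$ pseudo-effective, and big if $E$ is big.

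Now I would compose $f$ with a sufficiently high power of the Frobenius of $T$: choose $e$ with $p^e s \geq 1$ and let $\pi \colon T' \to T$ be the corresponding finite purely-inseparable cover, under which $f^*L_Z \sim_{\mbQ} sK_T + N$ pulls back to $(f\pi)^*L_Z \sim_{\mbQ} (p^e s)\,K_{T'} + M$ for a pseudo-effective $\mbQ$-divisor $M$ on $T'$, big if $E$ is big. Setting $\Delta_{T'} \defeq (f\pi)^*L_Z - K_{T'}$ and combining this with the pulled-back relation so as to eliminate $K_{T'}$,
\[
\Delta_{T'} \;\sim_{\mbQ}\; (p^e s - 1)\,K_{T'} + M \;\sim_{\mbQ}\; \frac{p^e s - 1}{p^e s}\,(f\pi)^*L_Z + \frac{1}{p^e s}\,M.
\]
Since $(f\pi)^*L_Z$ is nef (the pullback of the nef divisor $L_Z$ under the finite morphism $f\pi$), $p^e s - 1 \geq 0$, and $M$ is pseudo-effective, $\Delta_{T'}$ is pseudo-effective; and it is big when $E$ is big, since then $M$ is big. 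Replacing $(T,f)$ by $(T',f\pi)$ completes the proof.

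The one delicate point is the last identity: the summand $(p^e s - 1)K_{T'}$ carries no sign a priori, and the argument works only because, after re-expressing $K_{T'}$ through the nef divisor $(f\pi)^*L_Z$, the negative contribution cancels exactly --- this is exactly what forces the choice $p^e s \geq 1$. The rest is bookkeeping, the substantive input being Theorem~\ref{theorem:canonical_bundle_formula_main} and Lemma~\ref{lemma:pseudoeffective_surfaces}. (Formally, ``composing with the Frobenius of $T$'' should be read as base change along the $e$-th iterated relative Frobenius $T^{(1/p^e)} \to T$, a finite purely-inseparable $k$-morphism of normal projective varieties acting by multiplication by $p^e$ on $\mbQ$-Cartier divisor classes.)
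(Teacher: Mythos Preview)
Your proof is correct and follows the same strategy as the paper: apply Theorem~\ref{theorem:canonical_bundle_formula_main}, substitute $f^*K_Z$ via the pseudo-effective divisor $P = K_Z + \lambda L_Z$ (obtained from Lemma~\ref{lemma:pseudoeffective_surfaces}), and then compose with a power of Frobenius to normalise the coefficient of $K_T$ to~$1$. Your Frobenius manipulation (multiplying $s$ up to $p^e s \geq 1$ and then re-expressing $K_{T'}$ through the nef $(f\pi)^*L_Z$) is equivalent to the paper's (dividing the coefficient $b$ down to $b/p^k < 1$ and adding the nef term $(1-b/p^k)L_T$); both rely on $L_Z$ being nef. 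Your observation that one may take $t \in (0,1]$ in Theorem~\ref{theorem:canonical_bundle_formula_main} is correct and lets you avoid the separate case the paper treats when the $K_T$-coefficient vanishes.
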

We do not know if the assumption that $n(K_X+\Delta)=\dim Z$ (or more generally that $K_Z+\lambda L_Z$ is pseudo-effective) is necessary.
\begin{proof}
By Theorem \ref{theorem:canonical_bundle_formula_main}, there exists a purely inseparable morphism $f \colon T \to Z$ such that
\[
L_T \defeq f^*L_Z \sim_{\mbQ} tf^*K_Z + (1-t)K_T + E
\]
for some effective $\mbQ$-divisor $E$ on $T$ and $0 \leq t \leq 1$. If $t=1$, then $L_Z \sim_{\mbQ} K_Z + \frac{1}{\deg f}f_*E$. Thus, we can assume that $t<1$. Lemma \ref{lemma:pseudoeffective_surfaces} implies that there exists $\lambda >0$ such that $P \defeq K_Z + \lambda L_Z $ is pseudo-effective. Therefore,
\begin{align*}
(1+\lambda)L_T &\sim_{\mbQ} tf^*(K_Z+\lambda L_Z) + (1-t)K_T + E \\
&=\ \, (1-t)K_T + tf^*P + E.
\end{align*}
Thus, there exists a rational number $b > 0$ and a pseudo-effective $\mbQ$-divisor $Q$ such that 
\[
bL_T \sim_{\mbQ} K_T + Q.
\]
By replacing $f$ by its composition with some power of Frobenius, we may assume that
\[
\frac{b}{p^k}L_T \sim_{\mbQ}K_T + Q
\]
for $k \geq 0$ such that $\frac{b}{p^k} < 1$. Then
\[
L_T \sim_{\mbQ} K_T + Q + (1-\frac{b}{p^k})L_T,
\]
which concludes the proof.

\end{proof}

\begin{remark} \label{rem:cbf_3fold} Theorem \ref{theorem:canonical_bundle_formula_3fold} is valid for contractions of relative dimension one from a normal projective variety of dimension $n$ contingent upon the validity of the terminal Minimal Model Program in dimension $n-1$ (including the termination of arbitrary sequences of flips) as well as the existence of resolutions of singularities.
\end{remark}


\section{Applications} \label{s:bpf}
The goal of this section is to show Theorem \ref{thmi:non_vanishing}, \ref{thmi:abundance}, \ref{thmi:bpf}, and \ref{thmi:nonvanishing_bpf}. 

\subsection{Nef reduction map and the canonical bundle formula}
In this subsection, we prepare the toolkit for tackling the case of nef dimension two in the applications of the canonical bundle formula to threefolds; more precisely we deal with the issue that nef reduction maps are well defined only up to a birational modification. On the first reading of the article, the reader might consider skipping this subsection by assuming that the encountered nef reduction maps are proper morphisms. With this assumption, Proposition \ref{prop:nef_reduction_cbf} is a special case of Theorem \ref{theorem:canonical_bundle_formula_3fold}.

We start by stating a simple lemma used to resolve rational maps.
\begin{lemma} \label{lemma:nef_reduction_map_3folds} Let $X$ be a normal projective threefold defined over an algebraically closed field $k$, and let $L$ be a nef Cartier divisor on it. Let $\phi \colon X \dashrightarrow Z$ be a map of relative dimension one which is proper over an open subset $V \subseteq Z$  and such that $\phi_* \mcO_{\phi^{-1}(V)} = \mcO_V$. Assume that $L|_{X_{\mu}} \sim_{\mbQ} 0$ where $X_{\mu}$ is the generic fibre. Then the map $\phi$, after possibly replacing $Z$ by a birational modification, admits a birational resolution
\begin{center}
\begin{tikzcd}
Y \arrow[swap]{r}{g} \arrow[bend left = 25]{rr}{\psi} & X \arrow[dashed, swap]{r}{\phi} & Z.
\end{tikzcd}
\end{center} 
with $\psi$ equidimensional, $Y$ normal, $Z$ smooth, and such that
\begin{enumerate}
	\item $g^*L \sim_{\mbQ} \psi^*L_Z$ for a $\mbQ$-divisor $L_Z$ on $Z$, and
	\item no curve on $Y$ is contracted by both $g$ and $\psi$.
\end{enumerate}
\end{lemma}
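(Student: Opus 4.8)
The plan is to build the resolution in three steps: a graph construction turning the rational map $\phi$ into a morphism; Raynaud's flatification (Theorem~\ref{thm:equidimensional}) to make that morphism equidimensional; and finally a resolution of singularities of the surface $Z$ carried out so as not to destroy equidimensionality. Condition (1) will then drop out of Lemma~\ref{lemma:descending_nef_line_bundles}. Observe at the outset that $\dim Z = 2$, since $\phi$ has relative dimension one, so resolution of $Z$ is available in any characteristic; the hypothesis that $\phi$ is proper over $V$ with $\phi_*\mcO_{\phi^{-1}(V)} = \mcO_V$ is not actually needed for the present lemma.

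First I would let $Y_1$ be the normalisation of the closure $\overline{\Gamma} \subseteq X \times Z$ of the graph of $\phi$, with $g_1 \colon Y_1 \to X$ and $\psi_1 \colon Y_1 \to Z$ the two projections, so that $g_1$ is projective and birational and $Y_1 \to X \times Z$ is finite; in particular no curve on $Y_1$ is contracted by both $g_1$ and $\psi_1$. By generic flatness $\psi_1$ is flat over a dense open of $Z$, so Theorem~\ref{thm:equidimensional} supplies a birational morphism $Z_1 \to Z$ such that, writing $Y_2$ for the closure of the generic fibre of $\psi_1$ in $Y_1 \times_Z Z_1$ — an integral scheme, the generic fibre being a localisation of the integral scheme $Y_1$ — the induced map $\psi_2 \colon Y_2 \to Z_1$ is flat, hence, its source and target being irreducible, equidimensional with all fibres of dimension one. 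Next I would take a resolution $Z_2 \to Z_1$ of the surface $Z_1$, let $Y_3$ be the normalisation of the closure of the generic fibre of $\psi_2$ in $Y_2 \times_{Z_1} Z_2$, and let $\psi_3 \colon Y_3 \to Z_2$ and $g_3 \colon Y_3 \to Y_2 \to Y_1 \to X$ be the induced morphisms. The key point is that $\psi_3$ is still equidimensional of relative dimension one: each of its fibres lies inside a base change of a one-dimensional fibre of $\psi_2$, so has dimension at most one, whereas upper semicontinuity of fibre dimension on the irreducible variety $Y_3$ (whose generic fibre over $Z_2$ is one-dimensional) forces every fibre to have dimension at least one — and normalisation changes nothing. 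We then set $Y := Y_3$, replace $Z$ by $Z_2$, and take $g := g_3$, $\psi := \psi_3$; then $Y$ is normal, the new $Z$ is smooth, and $\psi$ is an equidimensional projective morphism resolving $\phi$.

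For (2), note that at each base-change step the new total space maps \emph{finitely} onto the relevant fibre product: $Y_2 \hookrightarrow Y_1 \times_Z Z_1$ is a closed immersion and $Y_3 \to Y_2 \times_{Z_1} Z_2$ is the normalisation of a closed subscheme. So if a curve $C \subseteq Y_3$ were contracted by both $g_3$ and $\psi_3$, then either its image in $Y_2$ is a point — forcing $C$ to be contracted by the finite map $Y_3 \to Y_2 \times_{Z_1} Z_2$, which is impossible — or that image is a curve $C' \subseteq Y_2$, which is then contracted both by $g_2 \colon Y_2 \to Y_1 \to X$ and by $\psi_2$, because $g_3 = g_2 \circ (Y_3 \to Y_2)$ and $\psi_2 \circ (Y_3 \to Y_2)$ equals $\psi_3$ followed by $Z_2 \to Z_1$. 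Applying the same dichotomy to $C' \subseteq Y_2$ either contradicts finiteness of $Y_2 \hookrightarrow Y_1 \times_Z Z_1$ or produces a curve on $Y_1$ contracted by both $g_1$ and $\psi_1$, contradicting the first step. For (1): $g^*L$ is nef (the pullback of a nef divisor along a proper morphism) and $(g^*L)|_{Y_\mu} \sim_{\mbQ} 0$, where $Y_\mu$ is the generic fibre of $\psi$, since $g$ maps $Y_\mu$ birationally onto the generic fibre $X_\mu$ of $\phi$ and $L|_{X_\mu} \sim_{\mbQ} 0$ by hypothesis; as $\psi \colon Y \to Z$ is an equidimensional proper morphism of normal projective varieties, Lemma~\ref{lemma:descending_nef_line_bundles} yields a $\mbQ$-divisor $L_Z$ on $Z$ with $g^*L \sim_{\mbQ} \psi^*L_Z$.

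The main obstacle is precisely the tension in the last two construction steps: flatification (which delivers equidimensionality) and resolution of singularities (which delivers a smooth $Z$) pull against each other, so one must flatten \emph{first} and resolve \emph{afterwards}, and then verify — via the upper-semicontinuity argument above — that the base change to a smooth model cannot re-create higher-dimensional fibres. Everything else (integrality and birationality of the various closures, properness and projectivity of the maps, and the fact that $\psi$ genuinely resolves $\phi$) is routine bookkeeping with graphs, normalisations, and fibre products.
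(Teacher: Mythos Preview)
Your proof is correct and follows the same overall architecture as the paper: resolve $\phi$ via the graph, apply Raynaud's flatification, resolve the surface $Z$, and then invoke Lemma~\ref{lemma:descending_nef_line_bundles} for (1). You also fill in, via upper semicontinuity, the verification that equidimensionality survives the base change to a smooth $Z$, a point the paper passes over quickly.

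The one genuine difference is in the argument for (2). You obtain it by tracking curves back through the tower $Y_3 \to Y_2 \to Y_1$, using that at each stage the map to the relevant fibre product is finite, so a curve simultaneously contracted by $g$ and $\psi$ would eventually contradict finiteness of $Y_1 \to X \times Z$. The paper instead performs one more replacement at the end: it takes very ample divisors $A_X$ on $X$ and $A_Z$ on $Z$, and replaces $Y$ by the image of the semiample fibration associated to $g^*A_X + \psi^*A_Z$; on this new $Y$ the divisor $g^*A_X + \psi^*A_Z$ is ample, so no curve can be contracted by both $g$ and $\psi$. The paper's trick is cleaner and independent of how $Y$ was built, while your argument is more hands-on but avoids an extra modification of $Y$. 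Both are valid.
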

In particular, (2) implies that no $g$-exceptional divisor on $Y$ is contracted by $\psi$ to a point. By a birational modification of $Z$, we mean an arbitrary surface $Z'$ which is birational to $Z$. 
\begin{proof}
First, we pick an arbitrary resolution of $\phi$, that is a birational morphism $g \colon Y \to X$ for which there exists a map $\psi \colon Y \to Z$ such that $\psi = \phi \circ g$. Replacing $\psi$ by Raynaud's flatification (see Theorem \ref{thm:equidimensional}) we can assume that $\psi$ is equidimensional. Replacing $Z$ by its resolution of singularities, and $Y$ by the normalisation of an appropriate base-change, we can assume that $Z$ is smooth. By Lemma \ref{lemma:descending_nef_line_bundles}, there exists a $\mbQ$-divisor $L_Z$ on $Z$ such that $g^*L \sim_{\mbQ} \psi^* L_Z$. Hence, (1) holds.

We show that (2) holds if we replace $Y$ by the common factorisation of $g$ and $\psi$. More precisely, we pick very ample Cartier divisors $A_X$, $A_Z$ on $X$ and $Z$, respectively, and replace $Y$ by the image of the semiample fibration associated to the basepoint-free divisor $g^*A_X + \psi^*A_Z$. Therefore, we can assume that $g^*A_X + \psi^*A_Z$ is ample, and so (2) holds. 
\end{proof}

\begin{proposition} \label{prop:nef_reduction_cbf} Let $(X,\Delta)$ be a projective three-dimensional log pair defined over an algebraically closed field $k$ of characteristic $p>3$ (or $p>2$ if $\Delta$ is big) such that $L \defeq K_X+\Delta$ is nef, $n(L)=2$, and the coefficients of $\Delta$ are at most one. Then there exists a diagram
\begin{center}
\begin{tikzcd}
& X \arrow[dashed]{d}{\phi} & Y \arrow[swap]{l}{g} \arrow{ld}{\psi} \\
T \arrow{r}{f} & Z & 
\end{tikzcd}
\end{center} 
with $g$ being birational, $\psi$ being an equidimensional contraction, $f$ being generically purely inseparable, $\phi$ being a nef reduction map for $L$, the variety $Y$ being normal, and $Z$, $T$ being smooth such that
\begin{enumerate}
	\item $g^*L \sim_{\mbQ} \psi^*L_Z$ for a $\mbQ$-divisor $L_Z$ on $Z$, and
	\item $f^*L_Z \sim_{\mbQ} K_T + \Delta_T - M$ for a pseudo-effective $\mbQ$-divisor $\Delta_T$ and an effective $\mbQ$-divisor $M$ satisfying $f^*L_Z|_{\Supp M} \equiv 0$.
\end{enumerate}
\end{proposition}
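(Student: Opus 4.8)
The plan is to combine the resolution lemma (Lemma \ref{lemma:nef_reduction_map_3folds}) with the weak canonical bundle formula (Theorem \ref{theorem:canonical_bundle_formula_main}) and then deal with the fact that the nef reduction map is only defined birationally by a careful bookkeeping of vertical divisors. First I would take $\phi \colon X \dashrightarrow Z$ to be a nef reduction map of $L$, which exists by Definition \ref{definition:nef_reduction_map_fp} (after the usual passage to an uncountable base field if $k$ is countable); since $n(L)=2$, the map $\phi$ has relative dimension one. Because $L \cdot C = 0$ for the very general curve $C$ in a fibre, the restriction $L|_{X_\mu}$ to the generic fibre is numerically trivial, and by Lemma \ref{lemma:descending_nef_line_bundles_to_curves} applied after restricting to a general fibre (a surface mapping to a curve), in fact $L|_{X_\mu} \sim_{\mbQ} 0$. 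This puts us in a position to apply Lemma \ref{lemma:nef_reduction_map_3folds}: after replacing $Z$ by a birational modification we obtain the birational resolution $g \colon Y \to X$, an equidimensional contraction $\psi \colon Y \to Z$ with $Y$ normal and $Z$ smooth, a $\mbQ$-divisor $L_Z$ on $Z$ with $g^*L \sim_{\mbQ} \psi^* L_Z$, and the property that no curve on $Y$ is contracted by both $g$ and $\psi$. This yields item (1).

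For item (2), the idea is to run the canonical bundle formula on $(Y, \Delta_Y)$ over $Z$, where $K_Y + \Delta_Y = g^*(K_X+\Delta) + (\text{effective }g\text{-exceptional})$; more precisely I would write $g^*(K_X+\Delta) = K_Y + \Delta_Y - G$ where $\Delta_Y \geq 0$ and $G \geq 0$ is $g$-exceptional, with $\Delta_Y$ and $G$ having no common components, so that $K_Y+\Delta_Y \sim_{\mbQ} \psi^*L_Z + G$. The horizontal coefficients of $\Delta_Y$ are still at most one since $g$ is birational, and $(K_Y+\Delta_Y)\cdot F = \psi^* L_Z \cdot F + G\cdot F$ for a general fibre $F$; here one must check $G \cdot F = 0$, which holds because $G$ is $g$-exceptional so $g(G)$ has codimension $\geq 2$ in $X$ and hence meets the general fibre $g(F)$ in at most finitely many points — but $G$ being a divisor, $\Supp G$ contains no horizontal divisor unless $g(\Supp G)$ is a horizontal divisor on $X$, which is impossible. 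So we may feed $(Y, \Delta_Y + (\text{suitable push of }L_Z))$ into Theorem \ref{theorem:canonical_bundle_formula_main} (or rather Theorem \ref{theorem:canonical_bundle_formula_3fold}, using that $\dim X = 3$), obtaining a finite purely inseparable $f \colon T \to Z$ with $T$ normal, which after replacing $T$ by its resolution we may take smooth, and a relation
\[
f^*L_Z \sim_{\mbQ} K_T + \Delta_T'
\]
with $\Delta_T'$ pseudo-effective; when $\Delta$ is big the stronger conclusion of that theorem gives $\Delta_T'$ big. The condition $n(K_X+\Delta) = 2 = \dim Z$ supplies the pseudo-effectivity hypothesis $K_Z + \lambda L_Z$ pseudo-effective needed by Theorem \ref{theorem:canonical_bundle_formula_3fold} via Lemma \ref{lemma:pseudoeffective_surfaces}.

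The main obstacle, and the reason item (2) is stated with the extra term $-M$, is the discrepancy between $g^*L$ and $K_Y + \Delta_Y$: the $g$-exceptional divisor $G$ need not be $\psi$-vertical-only in a way that descends cleanly, and more seriously the canonical bundle formula is applied to $\psi \colon Y \to Z$ rather than to the original $\phi$, so the divisor produced on $Z$ (and then on $T$) carries contributions from $g$-exceptional divisors that $\psi$ maps onto divisors of $Z$. The clean way to package this is: after descending, one gets $f^*L_Z \sim_{\mbQ} K_T + \Delta_T - M$ where $M \geq 0$ collects exactly the pushforward to $T$ of the $g$-exceptional part, and $M$ is $\psi \circ f$-vertical; the key point $f^*L_Z|_{\Supp M} \equiv 0$ follows because $\Supp M$ lies over a proper closed subset $W \subsetneq Z$ and on $Y$ the divisor $g^*L = \psi^*L_Z$ restricted to $\psi^{-1}(W)$ is numerically trivial on the fibre directions while property (2) of Lemma \ref{lemma:nef_reduction_map_3folds} (no $g$-exceptional divisor contracted by $\psi$) forces the relevant exceptional divisors to dominate curves in $Z$ along which $L_Z$ is numerically trivial — because those curves are images of $g$-contracted loci, through whose general points pass $L$-trivial curves by the defining property of the nef reduction map. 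I would carry this last verification out by restricting everything to $\Supp M$ and using that $L = K_X + \Delta$ is nef together with the equality $g^* L \cdot C = 0$ for curves $C$ in $g$-exceptional divisors that $\psi$ does not contract, concluding $f^*L_Z|_{\Supp M}\equiv 0$ by the numerical characterisation of Theorem \ref{theorem:numerically_trivial_fibres}. This is the delicate step; the rest is assembling the diagram and invoking the already-proven theorems.
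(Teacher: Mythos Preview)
Your overall architecture matches the paper's --- resolve the nef reduction map via Lemma~\ref{lemma:nef_reduction_map_3folds}, apply the canonical bundle formula on $Y$ over $Z$, and absorb the $g$-exceptional discrepancy into the $-M$ term --- but there is a genuine gap in the bridging step, and your verification of $f^*L_Z|_{\Supp M}\equiv 0$ is muddled.

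\textbf{The gap.} You write $K_Y + \Delta_Y \sim_{\mbQ} \psi^*L_Z + G$ with $\Delta_Y, G \geq 0$ and $G$ $g$-exceptional, and then propose to feed $(Y,\Delta_Y)$ into Theorem~\ref{theorem:canonical_bundle_formula_main} or~\ref{theorem:canonical_bundle_formula_3fold}. But those theorems require the log canonical divisor to be $\mbQ$-linearly a pullback from $Z$, and $G$ is not of the form $\psi^*(\text{something})$. Your sentence ``$M$ collects exactly the pushforward to $T$ of the $g$-exceptional part'' names the desired outcome without supplying the mechanism. The paper's move is to set $M \defeq \psi(\Exc g)$, a \emph{divisor on $Z$} (Lemma~\ref{lemma:nef_reduction_map_3folds}(2) rules out any $g$-exceptional $E$ mapping to a point, and $n(L)=2$ rules out $\psi(E)=Z$), so that $\Exc g \subseteq \Supp \psi^*M$. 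Writing $K_Y+\Delta_Y = g^*(K_X+\Delta)$ as the honest log pullback (possibly non-effective), one picks $b>0$ with $\Delta_Y + b\psi^*M \geq 0$ and obtains
\[
K_Y + (\Delta_Y + b\psi^*M) \sim_{\mbQ} \psi^*(L_Z + bM),
\]
to which Theorem~\ref{theorem:canonical_bundle_formula_main} applies directly. After the usual manipulation with $K_Z + \lambda L_Z$ and a Frobenius composition, the term $-bf^*M$ falls out on the right-hand side; this is where $M$ comes from.

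\textbf{The triviality of $L_Z|_M$.} Invoking ``the defining property of the nef reduction map'' is the wrong tool: that property controls $L$-trivial curves through \emph{very general} points of $X$, not through the special locus $g(\Exc g)$. The correct argument uses only Lemma~\ref{lemma:nef_reduction_map_3folds}(2). For a $g$-exceptional divisor $E$ with $C \defeq \psi(E)$ a curve, take a fibre $F$ of $\psi|_E$ over a point of $C$; then $\psi^*L_Z \cdot F = 0$. By (2), $g$ does not contract $F$, so $g(F)$ is a curve contained in $g(E)$, hence $g(E)$ is the curve $D \defeq g(F)$ and $0 = g^*L \cdot F = (\deg g|_F)(L \cdot D)$. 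Now any curve $\gamma \subseteq E$ dominating $C$ satisfies $L_Z \cdot C = \tfrac{1}{\deg \psi|_\gamma}\, g^*L \cdot \gamma$, and $g_*\gamma$ is a multiple of $D$, so $L_Z \cdot C = 0$. Theorem~\ref{theorem:numerically_trivial_fibres} is not needed.

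A minor point: $L|_{X_\mu}\sim_{\mbQ}0$ follows from abundance for log curves (since $L|_{X_\mu}=K_{X_\mu}+\Delta|_{X_\mu}$), not from Lemma~\ref{lemma:descending_nef_line_bundles_to_curves}.
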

Moreover, if $\Delta$ is big and $\lfloor \Delta \rfloor = 0$, then $\Delta_T$ is big.
\begin{proof}
We shall show the proposition for $T$ being normal. Thereafter, we replace it by its minimal resolution of singularities. 

Take $\phi$ to be a nef reduction map for $L$ and note that by abundance for curves $L|_{X_{\mu}} \sim_{\mbQ} 0$ where $X_{\mu}$ is the generic fibre. Furthermore, $X_{\mu}$ is a smooth curve by Proposition \ref{proposition:general_fibre_smooth}. After possibly replacing $Z$ by a birational modification, Lemma \ref{lemma:nef_reduction_map_3folds} implies the existence of the diagram as above with $Y$ normal, $Z$  smooth, and $\psi$ a contraction. Moreover, there exists a $\mbQ$-divisor $L_Z$ on $Z$ such that $\psi^*L_Z \sim_{\mbQ} g^*L$. For the divisor $M \defeq \psi(\Exc g)$, we have $L_Z|_M \equiv 0$ and $\Exc g \subseteq \Supp \psi^*M$ (see Lemma \ref{lemma:nef_reduction_map_3folds}(2)). Write $K_Y + \Delta_Y = g^*(K_X +\Delta)$.

 Pick a rational number $b>0$ such that $\Delta_Y + b\psi^*M$ is effective. Then by Theorem \ref{theorem:canonical_bundle_formula_main} applied to $g^*L + b\psi^*M = K_Y+\Delta_Y+b\psi^*M$, there exists a purely inseparable cover $f \colon T \to Z$, a pseudo-effective $\mbQ$-divisor $\Delta_T$, and a rational number $0 \leq t \leq 1$ such that
\begin{align} \label{eq:nef_reduction_cbf}
f^*(L_Z + bM) \sim_{\mbQ} tf^*K_Z + (1-t)K_T + \Delta_T.
\end{align}

If $t=1$, then in fact we can take $f=\mathrm{id}$. If $t\neq 1$, then
\[
(1+\lambda)f^*L_Z \sim_{\mbQ} (1-t)K_T + tf^*P_Z + \Delta_T - bf^*M,
\]
where $\lambda >0$ is such that $P_Z \defeq K_Z + \lambda L_Z$ is pseudo-effective (see Lemma \ref{lemma:pseudoeffective_surfaces}). We can conclude by composing $f$ with a power of Frobenius as in the proof of Theorem \ref{theorem:canonical_bundle_formula_3fold}.

Now assume that $\Delta$ is big and $\lfloor \Delta \rfloor = 0$. We can write $\Delta \sim_{\mbQ} (1-\epsilon)\Delta + \epsilon E + \epsilon A$ where $0 < \epsilon \ll 1$ and $\Delta \sim_{\mbQ} A + E$ for an ample $\mbQ$-divisor $A$ and an effective $E$. Then Theorem \ref{theorem:canonical_bundle_formula_main} implies that $\Delta_T$ is big.
\end{proof}

\begin{remark} \label{remark:cbf_pseudoeffective}
The above proposition holds for $L = K_X + \Delta + N$, where $N$ is any nef and big $\mbQ$-Cartier $\mbQ$-divisor, by means of perturbation. Indeed, it is enough to show that Equation (\ref{eq:nef_reduction_cbf}) holds in the above proof. To this end, we invoke Theorem \ref{theorem:canonical_bundle_formula_main} as above but with $g^*L + b\psi^*M =K_Y+\Delta_Y + g^*N +b\psi^*M$ replaced by \[ K_Y + \Delta_Y + (1-\epsilon)g^*N+ \epsilon A + b\psi^*M\] where $A$ is an ample $\mbQ$-divisor on $Y$ of relative degree over $Z$ equal to that of $g^*N$ and $0 < \epsilon \ll 1$. By taking $\epsilon \to 0$, we get (\ref{eq:nef_reduction_cbf}) with $\Delta_T$ pseudoeffective.
\end{remark}

\subsection{Non-vanishing and abundance conjectures}
In this subsection, we show the log non-vanishing conjecture for klt pairs in characteristic $p>5$. Furthermore, we prove the log abundance conjecture for klt pairs of non-maximal nef dimension.

We divide the proof into three cases depending on the nef dimension.

\subsubsection{Nef dimension zero}

\begin{proposition} \label{prop:non_vanishing0} Let $(X,\Delta)$ be a projective three-dimensional Kawamata log terminal pair defined over an algebraically closed field $k$ of characteristic $p>5$. Assume that $K_X+\Delta \equiv 0$. Then $K_X+\Delta \sim_{\mbQ} 0$.
\end{proposition}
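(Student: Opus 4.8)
The plan is to pass to the Albanese morphism and argue by cases on its dimension, the only serious case being when the image is a surface, where the weak canonical bundle formula (Theorem \ref{theorem:canonical_bundle_formula_main}) does the work. Let $a\colon X\to A:=\operatorname{Alb}(X)$ be the Albanese morphism and set $d:=\dim a(X)$. If $d\neq 2$, I would invoke the known partial results on non-vanishing for threefolds with $K_X+\Delta\equiv 0$: the case $d=0$ of trivial Albanese image is covered by \cite{daswaldron}, and the cases $d\in\{1,3\}$ by \cite{zhang17} (one should check that a klt boundary causes no trouble there, which is where the hypothesis $p>5$, via the three-dimensional MMP, ultimately enters). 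So the real content is $d=2$, and the rest of the argument is devoted to it; note that in this case $a(X)=A$ is an abelian surface.

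Assume $d=2$ and take the Stein factorisation $a=\nu\circ\phi$, so that $\phi\colon X\to Z$ is a contraction with one-dimensional generic fibre and $\nu\colon Z\to A$ is finite with $Z$ a normal projective surface. Restricting $K_X+\Delta\equiv 0$ to the generic fibre $X_\mu$ and using $\Delta\ge 0$ gives $\deg K_{X_\mu}\le 0$, so since $p>5$ Proposition \ref{proposition:general_fibre_smooth} shows $X_\mu$ is smooth; one checks directly that $(K_X+\Delta)|_{X_\mu}\sim_{\mathbb{Q}}0$ (it has degree $0$ on a smooth rational or elliptic curve, and if the genus is one then $\Delta$ has no horizontal part). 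A Zariski-lemma argument as in the proof of Proposition \ref{proposition:canonical_bundle_formula}, combined with Raynaud's flattening (Theorem \ref{thm:equidimensional}) and the vertical-correction bookkeeping from the proof of Proposition \ref{prop:nef_reduction_cbf}, then lets us replace $X$ and $Z$ by suitable birational models so that $\phi$ becomes an \emph{equidimensional} contraction onto a smooth surface $Z$, the new boundary is effective with horizontal coefficients at most one, and $K_X+\Delta\sim_{\mathbb{Q}}\phi^*L_Z$ for a $\mathbb{Q}$-Cartier divisor $L_Z$ on $Z$; since $\phi^*L_Z\equiv 0$ and $\phi$ is surjective, $L_Z\equiv 0$.

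Now apply Theorem \ref{theorem:canonical_bundle_formula_main}: there are a finite purely inseparable morphism $f\colon T\to Z$ with $T$ normal, a rational number $0\le t\le 1$, and an effective $\mathbb{Q}$-divisor $E$ on $T$ with
\[
f^*L_Z\sim_{\mathbb{Q}}\;tK_T+(1-t)f^*K_Z+E.
\]
Both $Z$ and $T$ carry finite morphisms onto the abelian surface $A$ (namely $\nu$ and $\nu\circ f$); hence neither surface is uniruled (a rational curve would map onto a rational curve in $A$), so by the classification of surfaces in positive characteristic $\kappa(Z)\ge 0$ and $\kappa(T)\ge 0$. Consequently $K_Z$, $K_T$, and therefore $f^*K_Z$, are all $\mathbb{Q}$-linearly equivalent to effective $\mathbb{Q}$-divisors (passing to a resolution of $T$ if one wants $K_T$ to be Cartier), so the displayed equivalence exhibits $f^*L_Z\sim_{\mathbb{Q}}G$ for some effective $\mathbb{Q}$-divisor $G$. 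But $L_Z\equiv 0$ forces $G\equiv 0$, and an effective $\mathbb{Q}$-divisor numerically equivalent to zero vanishes (intersect with an ample class). Thus $f^*L_Z\sim_{\mathbb{Q}}0$, and pushing forward along the finite morphism $f$ gives $(\deg f)\,L_Z\sim_{\mathbb{Q}}0$, whence $L_Z\sim_{\mathbb{Q}}0$ and finally $K_X+\Delta\sim_{\mathbb{Q}}\phi^*L_Z\sim_{\mathbb{Q}}0$.

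The genuinely delicate point is the birational bookkeeping in the reduction step: making $\phi$ equidimensional via Raynaud flattening while keeping the boundary effective with horizontal coefficients $\le 1$ and preserving $K_X+\Delta\sim_{\mathbb{Q}}\phi^*L_Z$ with $L_Z$ a $\mathbb{Q}$-Cartier divisor. This is precisely the mechanism already built into the proof of Proposition \ref{prop:nef_reduction_cbf}, with the Albanese--Stein map playing the role of the nef reduction map there, so it should transfer without essential change. A secondary point is checking that the external inputs \cite{zhang17,daswaldron} really cover the cases $d\neq 2$ for klt rather than merely smooth pairs; that — and nothing in the $d=2$ argument proper — is where $p>5$ is used.
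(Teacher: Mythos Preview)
Your argument follows essentially the same route as the paper's: case analysis on $d=\dim a(X)$, with the substantive case $d=2$ handled by applying Theorem \ref{theorem:canonical_bundle_formula_main} to the Stein factorisation of the Albanese map and using that $Z$ and $T$ admit generically finite maps to an abelian variety, hence are not uniruled and have $\kappa\ge 0$.

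A few minor corrections are in order. First, you have the attributions for $d\ne 2$ swapped relative to the paper: the case $d=0$ is actually immediate (if $\dim\mathrm{Alb}(X)=0$ then $\mathrm{Pic}^0_{\mathrm{red}}(X)$ is a point, so every numerically trivial line bundle is torsion), while \cite{daswaldron} is what the paper invokes for $d=3$, not $d=0$. Second, your claim that $a(X)=A$ when $d=2$ is unwarranted --- the Albanese variety may well have dimension larger than $2$ --- though this is harmless since your argument only uses that $Z$ and $T$ map finitely into an abelian variety. Third, your equidimensionalisation manoeuvre is heavier than necessary and a bit risky: since $g$ is already a morphism (not merely rational), the paper simply asserts the existence of $L_S$ with $g^*L_S\sim_{\mbQ}L$, which one justifies by restricting to the codimension-two open locus of $S$ over which $g$ is equidimensional and applying Lemma \ref{lemma:descending_nef_line_bundles} there; invoking Raynaud flattening and the bookkeeping of Proposition \ref{prop:nef_reduction_cbf} introduces birational modifications of $X$ whose relation to the original $K_X+\Delta$ you would then have to track.
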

\begin{proof}
Set $L \defeq K_X+\Delta$. By replacing $X$ by its $\mbQ$-factorialisation, we may assume that $X$ is $\mbQ$-factorial.  The theorem is clear when $\dim \mathrm{alb}(X) = 0$ and it follows from the abundance for surfaces and \cite[Theorem 4.3 or Theorem 1.2]{zhang17} when $\dim \mathrm{alb}(X) = 1$. If $\dim \mathrm{alb}(X) = 2$, then we apply Theorem \ref{theorem:canonical_bundle_formula_main} to the connected-fibres part $g \colon X \to S$ of the Stein factorisation of the albanese morphism. Note that the generic fibre is smooth by Proposition \ref{proposition:general_fibre_smooth}. Therefore, we get a purely inseparable morphism $f \colon T \to S$ and an effective $\mbQ$-divisor $\Delta_T$ on $T$ satisfying
\[
f^*L_S \sim_{\mbQ} tf^*K_S + (1-t)K_T + \Delta_T,
\]
where $L_S$ is such that $g^*L_S \sim_{\mbQ} L$. Since $T$ and $S$ admit generically finite morphisms to an abelian variety, they cannot be uniruled, and hence $\kappa(S) \geq 0$ and $\kappa(T) \geq 0$. Thus $f^*L_S$ is $\mbQ$-effective, and so are $L_S$ and $L$. Last, when $\dim \mathrm{alb}(X) = 3$, the theorem follows from \cite[Theorem C]{daswaldron} (equivalently, run a $K_X$-MMP and apply \cite[Theorem A.1]{daswaldron} or \cite[Theorem 1.1]{zhang17}).
\end{proof} 

\subsubsection{Nef dimension one}

\begin{proposition} \label{prop:non_vanishing1} Let $L$ be a nef $\mbQ$-Cartier $\mbQ$-divisor on a projective three-dimensional variety defined over an algebraically closed field $k$, satisfying $n(L)=1$. Assume that $L \sim_{\mbQ} K_X+\Delta$ for some log canonical pair $(X,\Delta)$, or $k=\Fp$. Then $L$ is semiample.
\end{proposition}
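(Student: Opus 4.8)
The plan is to reduce the semiampleness of $L$ to the two-dimensional situation via the nef reduction map, and then invoke the known abundance results for surfaces. First I would take a nef reduction map $\phi \colon X \dashrightarrow C$ for $L$, where $C$ is a curve (this exists in the sense of Definition~\ref{definition:nef_reduction_map_fp} when $k$ is countable, in particular when $k = \Fp$; when $k$ is uncountable use Theorem~\ref{theorem:nef_reduction_map}). By Lemma~\ref{lemma:nef_reduction_map_3folds} and Lemma~\ref{lemma:descending_nef_line_bundles_to_curves}, after resolving $\phi$ by a birational morphism $g \colon Y \to X$ with $\psi \colon Y \to C$ equidimensional and $Y$ normal, we get $g^*L \equiv \psi^* D$ for a $\mbQ$-divisor $D$ on $C$. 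Since $n(L) = 1$, the divisor $D$ must have positive degree (otherwise $g^*L$, and hence $L$, would be numerically trivial, forcing $n(L) = 0$), so $D$ is ample on the curve $C$, and in particular $\psi^*D$ is nef and $\mbQ$-effective. By Lemma~\ref{lem:Keel_universal_mor} and Lemma~\ref{lem:semiample_cover}, semiampleness of $L$ is equivalent to that of $g^*L$, so it suffices to show $g^*L$ is semiample.

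At this point $g^*L \equiv \psi^*D$ with $D$ ample on the curve $C$, so $g^*L$ is $\mbQ$-linearly equivalent, up to numerical equivalence, to a semiample divisor — but numerical equivalence is not enough to conclude semiampleness, so the argument must handle the discrepancy between $g^*L$ and $\psi^*D$ carefully. The key point is that $g^*L - \psi^*D$ is $\psi$-numerically trivial and numerically trivial after restriction to each fibre, so its failure to vanish is ``vertical''; I would argue as in the proof of Lemma~\ref{lemma:descending_nef_line_bundles_to_curves} (citing \cite[Lemmas 5.2 and 5.3]{bw14}) that in fact $g^*L \equiv_C 0$, and then that $g^*L$ restricted to a general fibre $F$ of $\psi$ is numerically trivial. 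Now $F$ is a normal projective surface: if $L \sim_{\mbQ} K_X + \Delta$ then $g^*L = K_Y + \Delta_Y$ for the log pullback $\Delta_Y$ (which we may arrange to be effective on a general fibre, the coefficients of its horizontal part being at most one), so $g^*L|_F \sim_{\mbQ} K_F + \Delta_Y|_F$ by adjunction, and this is nef and numerically trivial; when $k = \Fp$ there is no pair structure needed. Applying abundance for surfaces — \cite{tanakaImperfect}, or for the $k = \Fp$ case Theorem~\ref{thm:mnw_surface} / Lemma~\ref{lemma:fp} (a numerically trivial line bundle on a surface over $\Fp$ is torsion) — we conclude $g^*L|_F$ is semiample, hence torsion on each general fibre; thus $g^*L \sim_{\mbQ} \psi^* D'$ for some $\mbQ$-divisor $D'$ on $C$ numerically equivalent to $D$, again via Lemma~\ref{lemma:descending_nef_line_bundles} (or its proof, using Zariski's lemma to kill the vertical error term).

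Finally, $D'$ is an honest $\mbQ$-divisor on the curve $C$ with $D' \equiv D$ ample, hence $D'$ itself is ample (positive degree on a curve), so $\psi^*D'$ is semiample, hence $g^*L$ is semiample, hence $L$ is semiample by Lemma~\ref{lem:semiample_cover}. I expect the main obstacle to be the careful bookkeeping needed to pass from ``$g^*L \equiv \psi^*D$'' to ``$g^*L \sim_{\mbQ} \psi^*D'$'': one must control the $\psi$-vertical difference, which requires knowing $g^*L$ is trivial (not merely numerically trivial) on general fibres, and this is exactly where surface abundance (in the two flavours above, depending on whether we are in the lc-pair case or the $k=\Fp$ case) enters. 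A secondary technical point is ensuring the log-pullback boundary $\Delta_Y$ restricts to an effective boundary with coefficients $\le 1$ on a general fibre so that adjunction produces a genuine log pair on the surface $F$; this is handled by the same blow-up-in-codimension-two argument used in the proof of Theorem~\ref{theorem:canonical_bundle_formula_main}.
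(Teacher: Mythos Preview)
Your strategy---descend $L$ along the nef reduction map to a curve, use surface abundance on the fibres, and conclude ampleness on the curve---is the same as the paper's, and your argument can be made to work. But you have introduced a layer of complication that the paper entirely avoids, and one of your citations is inapposite.

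The key simplification you miss is that when $n(L)=1$ the nef reduction map $\phi\colon X\dashrightarrow C$ is automatically a \emph{morphism}: $C$ is a smooth curve (being normal of dimension one) and any rational map from a normal variety to a smooth curve extends. The paper says exactly this in one line. Consequently there is no need for a resolution $g\colon Y\to X$, no log pullback $\Delta_Y$, and none of the bookkeeping you flag at the end about arranging $\Delta_Y$ to be effective with coefficients $\le 1$ on a general fibre. (Your invocation of Lemma~\ref{lemma:nef_reduction_map_3folds} is also off: that lemma is stated for relative dimension one, i.e.\ surface target, and already assumes $L|_{X_\mu}\sim_{\mbQ}0$ as a hypothesis.)

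The paper then splits exactly as you do. In the lc case it applies Tanaka's abundance for lc surfaces over imperfect fields (\cite{tanakaImperfect}) directly to the \emph{generic} fibre $X_\mu$ to get $L|_{X_\mu}\sim_{\mbQ}0$, and then Lemma~\ref{lemma:descending_nef_line_bundles} yields $L\sim_{\mbQ}\phi^*L_C$; this sidesteps your adjunction on general closed fibres, which in positive characteristic would require extra care (geometric normality, restriction of lc, etc.). In the $\Fp$ case it uses Lemma~\ref{lemma:descending_nef_line_bundles_to_curves} to get $L\equiv\phi^*D$ and then Lemma~\ref{lemma:fp} to upgrade numerical to $\mbQ$-linear equivalence---precisely your route, but without the resolution. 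In both cases $n(L_C)=1$ forces $L_C$ ample on the curve, and one is done.
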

\begin{proof}
Let $\phi \colon X \to Z$ be a nef reduction map of $L$. It is well defined as $\dim Z = 1$ and every rational map to a curve extends to a proper morphism. Furher, $\phi$ is equidimensional. 

We claim that there exists a $\QQ$-divisor $L_Z$ on $Z$ such that $\phi^*L_Z \sim_{\mbQ} L$. Indeed, when $L \sim_{\mbQ} K_X+\Delta$, the abundance for surfaces (see \cite[Theorem 1.1]{tanakaImperfect}) implies $L|_{X_{\mu}} \sim_{\mbQ} 0$ where $X_{\mu}$ is the generic fibre of $X$, and hence the claim follows from Lemma \ref{lemma:descending_nef_line_bundles}. Over $\Fp$ the claim is a consequence of Lemma \ref{lemma:descending_nef_line_bundles_to_curves} and Lemma \ref{lemma:fp}.

We can conlude, as $n(L_Z)=1$ and so the $\mbQ$-divisor $L_Z$ is ample.
\end{proof}

\subsubsection{Nef dimension two} \label{ss:neftwo}

We are left to show the case of nef dimension two. The idea of the proof is the following. Assume for simplicity that the nef reduction map $\psi \colon X \dashrightarrow Z$ of $K_X+\Delta$ is proper. Then, using the canonical bundle formula derived in this article, we can descend $K_X+\Delta$ to an adjoint divisor $K_Z + \Delta_Z$, with $\Delta_Z$ pseudoeffective, up to replacing $Z$ by a purely inseparable cover. By a careful study of adjoint divisors on surfaces (Lemma \ref{lem:bigsurface}), we get that $K_Z+\Delta_Z \equiv D_Z$ for an effective $\mbQ$-divisor $D_Z$, and so $K_X+\Delta \equiv D$ for $D \defeq \psi^*D_Z$. 

Now, we modify $(X,\Delta)$ so that $(X,\Delta)$ is dlt and $\Supp D \subseteq \lfloor \Delta \rfloor$; this is done by replacing $\Delta$ with $\max(\Delta, \Supp D)$ and taking a minimal model of an appropriate dlt modification (Lemma \ref{lem:abundance_dlt_modification}). Then $(K_X+\Delta)|_{\Supp D}$ is semiample by abundance for slc surfaces (see \cite[Theorem 1.3]{waldronlc}), from which we infer that  $(K_Z+\Delta_Z)|_{\Supp D_Z}$ is semiample as well (Lemma \ref{lem:descend}). Consequently, $\kappa(K_Z+\Delta_Z) \geq 0$ by a careful study of adjoint divisors on surfaces (Lemma \ref{lem:bigsurface}). Hence, up to another modification of $(X,\Delta)$ we can assume that $D_Z \sim_{\mbQ} K_Z+\Delta_Z$. In this case, we apply the evaporation technique to show that $K_Z+\Delta_Z$ is big (Lemma \ref{lem:evaporation}), proccuring that $\kappa(K_X+\Delta)\geq 2$. This concludes the proof of the semiampleness of $K_X+\Delta$ thanks to \cite[Theorem 1.3]{waldronabundance}. 

The statements and the proofs of the required results for adjoint divisors on surfaces may be found in Section \ref{s:surfces} (see Lemma \ref{lem:bigsurface}).\\ 

The following lemma is a variation on the standard result used in the proof of the abundance conjecture for threefolds in characteristic zero.

\begin{lemma}[{cf.\ \cite[Lemma 13.2]{kollaretal}}] \label{lem:abundance_dlt_modification} Let $(X,\Delta)$ be a three-dimensional dlt pair defined over an algebraically closed field of characteristic $p>5$ such that $X$ is $\mbQ$-factorial. Suppose that $K_X+\Delta$ is nef and there exists an effective $\mbQ$-divisor $D$ satisfying $D \equiv K_X+\Delta$. Then, there exists a three-dimensional dlt pair $(Y,\Delta_Y)$ such that 
\begin{enumerate}
	\item $K_Y+\Delta_Y$ is nef,
	\item $n(K_Y+\Delta_Y) = n(K_X+\Delta)$, 
	\item $\kappa(K_X+\Delta) \leq \kappa(K_Y+\Delta_Y) \leq \kappa(K_X+\Delta + rD)$ for some $r>0$,
	\item $K_Y+\Delta_Y \equiv E$ for an effective $\mbQ$-divisor $E$ with $\Supp E \subseteq \lfloor \Delta_Y \rfloor$,
	\item $(Y \, \backslash \,\! \Supp E, \Delta_Y) \simeq (X \, \backslash \,\! \Supp D, \Delta)$.
\end{enumerate}
Moreover, if $D \sim_{\mbQ} K_X+\Delta$, then $K_Y+\Delta_Y \sim_{\mbQ} E$ in (4).
\end{lemma}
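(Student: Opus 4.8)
The plan is to enlarge the boundary so that its round-down absorbs $\Supp D$, pass to a $\mbQ$-factorial dlt model on which this is possible, and then run a Minimal Model Program to recover nefness; properties (1)--(5) will then follow by bookkeeping. First I would set $\Delta^{\flat} := \max(\Delta,\Supp D) = \Delta + G$, where $G \geq 0$ is supported on the components of $\Supp D$ not contained in $\lfloor \Delta\rfloor$, so that $\Supp D \subseteq \lfloor \Delta^{\flat}\rfloor$; note $K_X + \Delta^{\flat}$ is $\mbQ$-Cartier since $X$ is $\mbQ$-factorial. As $(X,\Delta^{\flat})$ need not even be log canonical, I would instead choose a projective birational $\mu\colon W \to X$ that is an isomorphism over $X \setminus \Supp D$ (the pair $(X,\Delta)$ being already dlt there) and that log-resolves $(X,\Delta+D)$ over a neighbourhood of $\Supp D$, and set $\Gamma := \max(\mu_*^{-1}\Delta,\ \mu_*^{-1}\Supp D) + \Exc(\mu)$, arranged so that $(W,\Gamma)$ is $\mbQ$-factorial dlt. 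Writing $K_W + \Gamma = \mu^*(K_X+\Delta) + F$, one checks: $F \geq 0$ (because $(X,\Delta)$ is lc, so $\Gamma$ dominates the log pullback of $\Delta$), $\mu_* F = G$, $\Supp F \subseteq \mu^{-1}(\Supp D)$, and $F \leq r\,\mu^* D$ for some $r>0$ (every component of $\mu^{-1}(\Supp D) = \Supp \mu^*D$ appears in $\mu^*D$ with positive multiplicity). Setting $D_W := \mu^*D + F$, we obtain $K_W + \Gamma \equiv D_W \geq 0$, $\Supp D_W = \mu^{-1}(\Supp D) \subseteq \lfloor \Gamma\rfloor$, and $\mu^*(K_X+\Delta) \leq K_W + \Gamma \leq \mu^*(K_X+\Delta+rD)$.

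Next I would run a $(K_W+\Gamma)$-MMP with scaling of an ample divisor. Since $K_W+\Gamma$ is effective, this terminates, by the threefold Minimal Model Program in characteristic $p>5$, at a $\mbQ$-factorial dlt pair $(Y,\Delta_Y)$ with $K_Y + \Delta_Y$ nef, which is (1). Let $E$ be the strict transform of $D_W$. As numerical (resp.\ $\mbQ$-linear) equivalence of $\mbQ$-Cartier divisors is preserved by flips and divisorial contractions, $E \geq 0$, $E \equiv K_Y + \Delta_Y$, and $\Supp E$, being the transform of $\mu^{-1}(\Supp D) \subseteq \lfloor \Gamma\rfloor$, lies in $\lfloor \Delta_Y\rfloor$; this is (4), and the ``moreover'' follows by replacing $\equiv$ with $\sim_{\mbQ}$ in $K_W+\Gamma = \mu^*(K_X+\Delta)+F \sim_{\mbQ} \mu^*D+F = D_W$ once $D \sim_{\mbQ} K_X+\Delta$. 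The key point for (5) is that \emph{every $(K_W+\Gamma)$-negative curve lies in $\mu^{-1}(\Supp D)$}: for $C \not\subseteq \mu^{-1}(\Supp D)$ one has $F\cdot C \geq 0$ and $(r\mu^*D - F)\cdot C \geq 0$, whence, using $\mu^*D \equiv \mu^*(K_X+\Delta)$ and nefness of $\mu^*(K_X+\Delta)$, $(K_W+\Gamma)\cdot C = \mu^*(K_X+\Delta)\cdot C + F\cdot C \geq 0$. This constraint persists along the MMP (which is therefore an isomorphism over the complement of $\mu^{-1}(\Supp D)$, along which $D_W$ and the bound $F \leq r\mu^*D$ transport unchanged), so the composite $Y \dashrightarrow W \to X$ restricts to an isomorphism $Y \setminus \Supp E \simeq X \setminus \Supp D$ matching $\Delta_Y$ with $\Delta$, which is (5).

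For the Iitaka dimension, flips and divisorial contractions preserve the section ring of the log canonical divisor (for a divisorial contraction $f\colon W'\to W''$ of a divisor $E'$ one has $f_*\mcO_{W'}(mE') = \mcO_{W''}$), so $\kappa(K_Y+\Delta_Y) = \kappa(K_W+\Gamma)$; combined with $\mu^*(K_X+\Delta)\leq K_W+\Gamma\leq \mu^*(K_X+\Delta+rD)$ and the invariance of $\kappa$ under pullback along $\mu$, this gives (3). Finally (2): since $K_Y+\Delta_Y \equiv E$ with $E \geq 0$ and $\Supp E$ a proper closed subset, a curve through a very general point of $Y$ is $(K_Y+\Delta_Y)$-trivial if and only if it is disjoint from $\Supp E$; similarly, as $K_X+\Delta\equiv D\geq 0$, a curve through a very general point of $X$ is $(K_X+\Delta)$-trivial if and only if it avoids $\Supp D$. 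Through the isomorphism of (5) these two families of curves --- and the connected chains built from them --- correspond bijectively, with very general points matching very general points, so the nef reduction maps of $K_Y+\Delta_Y$ and $K_X+\Delta$ have the same general fibres; hence $n(K_Y+\Delta_Y) = n(K_X+\Delta)$. (Over a countable field one first base-changes to an uncountable one, as in the definition of the nef reduction map.)

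The main obstacle I anticipate is controlling the MMP in the second step --- verifying that the numerical constraint ``$(K_W+\Gamma)$-negative curves lie in $\mu^{-1}(\Supp D)$'' genuinely persists through every flip and divisorial contraction, so that the MMP is an isomorphism outside $\mu^{-1}(\Supp D)$. This is the input for (5), and via (5) it underpins both (4) and the clean argument for (2); once it is secured, the remaining verifications are routine.
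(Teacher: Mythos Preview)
Your approach is essentially the paper's: enlarge the boundary to $\max(\Delta,\Supp D)$, pass to a dlt model, run the $(K_W+\Gamma)$-MMP, and read off (1)--(5). The paper also takes a log resolution $g\colon W\to X$, sets $\Delta_W=\max(g^*\Delta-K_{W/X},\Supp g_*^{-1}D+\Exc)$, and runs the MMP. The one substantive difference is how the isomorphism outside $\Supp D$ is obtained. You assert that $\mu$ can be chosen to be an isomorphism over $X\setminus\Supp D$ while log-resolving near $\Supp D$; this is not automatic (a log resolution may blow up along the snc locus of $(X,\Delta)$ away from $D$, creating exceptional divisors with discrepancy $-1$ and $F$-coefficient $0$ there), and your later claim $\Supp F\subseteq\mu^{-1}(\Supp D)$ hinges on it. The paper handles this by first running a $(K_W+\Delta_W)$-MMP \emph{over $X$}: by the negativity lemma the effective $g$-exceptional part of $F$ over $X\setminus\Supp D$ is forced to $0$, and since $(X,\Delta)$ is $\mbQ$-factorial dlt this yields $W\setminus\Supp E_W\simeq X\setminus\Supp D$. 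Once you insert this step (or justify a Szab\'o-type choice of resolution), your argument goes through.

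Your direct verification that every $(K_W+\Gamma)$-negative curve lies in $\Supp D_W$, and that this persists under each MMP step because $D_W$ and $r\mu^*D-F$ push forward to effective divisors with the same support, is correct and is a clean replacement for the paper's two-stage MMP. For (2) the paper instead passes to a common resolution and uses the sandwich $g^*(K_X+\Delta)\leq h^*(K_Y+\Delta_Y)\leq g^*(K_X+\Delta+rD)$ to conclude $n(K_Y+\Delta_Y)=n(K_X+\Delta)$ directly; your route via (5) and matching $(K\!+\!\Delta)$-trivial curves through very general points gives the same conclusion. Both organizations work; yours makes the geometric content of (5) more explicit, while the paper's inequality simultaneously yields (2), (3), and the support comparison in one stroke.
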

The pair $(Y,\Delta_Y)$ is nothing but a log minimal model of a dlt blow-up of $(X,\widetilde{\Delta})$, where $\widetilde{\Delta} \defeq \max(\Delta, \Supp D)$. For the convenience of the reader, we append the detailed construction below.
\begin{proof}

We will construct the following commutative diagram
\begin{center}
\begin{tikzcd}
 & W \arrow[dashed]{dl}[swap]{h} \arrow{dr}{g} & \\
Y  & &  X.
\end{tikzcd}
\end{center}
Let $g \colon W \to X$ be a resolution of singularities of $(X, \Supp(\Delta + D))$ with exceptional locus $\mathrm{Exc}$. Set 
\[
\Delta_W \defeq \max(g^*\Delta - K_{W/X}, \Supp(g^{-1}_*D) + \mathrm{Exc}),
\]
that is $\Delta_W$ is constructed from the log pullback of $\Delta$ by replacing all the coefficients of irreducible divisors in $\Supp(g^{-1}_*D) + \mathrm{Exc}$ by one. Since $(X,\Delta)$ is dlt, we can write $K_W+\Delta_W \equiv E_W$, where $E_W$ is an effective $\mbQ$-divisor with $\Supp E_W \subseteq \Supp(g^{-1}_*D) + \mathrm{Exc} \subseteq \lfloor \Delta_W \rfloor$. By replacing $W$ by the output of a $(K_W+\Delta_W)$-MMP over $X$ and using again that $(X,\Delta)$ is dlt, we can assume that $W \, \backslash \,\! \Supp E_W \simeq X \, \backslash \,\! \Supp D$ and $\Supp E_W = \Supp g^*D$.

Now, run a full $(K_W+\Delta_W)$-MMP. Since $K_W+\Delta_W$ is pseudoeffective, it terminates with a minimal model $h \colon W \dashrightarrow Y$; in particular $K_Y+\Delta_Y$ is nef for $\Delta_Y \defeq h_*\Delta$. Furthermore, $K_Y+\Delta_Y \equiv E$ and $\Supp E \subseteq \lfloor \Delta_Y \rfloor$, where $E \defeq h_*E_W$. Hence, (1) and (4) hold. Since $h$ and $g$ are isomorphisms outside of $E_W$, we get (5).

To show (2) and (3), we find $h' \colon W' \to W$ such that $h \circ h'$ is a well defined morphism. Replace $h$ by $h \circ h'$ and $g$ by $g \circ h'$. Then
\[
g^*(K_X+\Delta) \leq h^*(K_Y+\Delta_Y) \leq g^*(K_X+\Delta + rD)
\]
for some $r>0$ . The first inequality follows from the negativity lemma: $g^*(K_X+\Delta) \leq h^*h_*g^*(K_X+\Delta) \leq h^*(K_Y+\Delta_Y)$, while the second inequality is a consequence of (5) as $\Supp g^*D = \Supp h^*E$. Therefore, 
\begin{align*}
&n(K_Y+\Delta_Y) = n(K_X+\Delta),  \text{ and } \\
&\kappa(K_X+\Delta) \leq \kappa(K_Y+\Delta_Y) \leq \kappa(K_X+\Delta + rD),
\end{align*}
concluding the proof of the lemma.
\end{proof}

\begin{lemma} \label{lem:descend} Let $X$ be a three-dimensional projective variety defined over an algebraically closed field $k$ of characteristic $p>0$, let $\psi \colon X \to Y$ be an equidimensional contraction onto a surface $Y$, let $M$ be a Cartier divisor on $Y$, and let $C$ be any reduced divisor on $Y$. Assume that $\psi^*M|_{S}$ is semiample for the reduction $S$ of $\psi^{-1}(C)$. Then $M|_{C}$ is semiample.
\end{lemma}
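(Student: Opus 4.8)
The goal is to deduce semiampleness of $M|_C$ from semiampleness of $\psi^*M|_S$, where $S$ is the reduction of $\psi^{-1}(C)$ and $\psi$ is an equidimensional contraction onto the surface $Y$. The plan is to use Lemma \ref{lem:semiample_cover} (semiampleness descends along proper surjective morphisms of normal varieties) together with Lemma \ref{lem:Keel_universal_mor} (semiampleness is insensitive to finite universal homeomorphisms, in particular to passing to the reduction). First I would observe that $\psi^{-1}(C)$ is equidimensional of dimension one over $C$, so the induced map $\psi|_S \colon S \to C$ is a proper surjective morphism; however $S$ and $C$ need not be normal, so Lemma \ref{lem:semiample_cover} does not apply directly. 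To remedy this, I would pass to the normalisation $\pi \colon \overline{C} \to C$ and to the normalisation $\overline{S}$ of $S \times_C \overline{C}$ (or of a dominant component thereof), obtaining a proper surjective morphism $\overline{S} \to \overline{C}$ of normal varieties; by Lemma \ref{lem:semiample_cover} it suffices to check that the pullback of $\psi^*M|_{S}$ to $\overline S$ is semiample, which it is, since semiampleness is preserved under arbitrary pullback. Then $\pi^*(M|_C)$, being the image of a semiample bundle under a proper surjective morphism of normal varieties, is semiample by Lemma \ref{lem:semiample_cover} again, and finally $M|_C$ is semiample because $\pi$ is finite surjective (one more application of Lemma \ref{lem:semiample_cover}, or of the convention that $M|_C$ denotes $\pi^*M|_C$ as set up in the notation section).

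Let me reorganise this so the steps are cleanly ordered. Step one: reduce to the normal case on the source. Since $\psi$ is equidimensional onto a surface, every fibre of $\psi$ over a point of $C$ is one-dimensional, so $\psi|_{\psi^{-1}(C)} \colon \psi^{-1}(C) \to C$ is surjective and equidimensional, hence so is $\psi|_S \colon S \to C$ after reducing. Step two: form the normalisation $\nu \colon \overline{S} \to S$ and note $\psi|_S \circ \nu$ need not map onto $\overline{C}$ through a nice factorisation, so instead take the Stein factorisation of the composite $\overline S \to C$, writing it as $\overline{S} \to C'' \to C$ with $\overline S \to C''$ a contraction of normal varieties and $C'' \to C$ finite. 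Step three: since $\psi^*M|_S$ is semiample, its pullback to $\overline S$ is semiample; by Lemma \ref{lem:semiample_cover} applied to the contraction $\overline S \to C''$, the divisor $M|_C$ pulled back to $C''$ is semiample (here one uses that $\psi^*M$ restricted to $\overline S$ is the pullback of $M|_C$ along $C'' \to C$). Step four: $C'' \to C$ is finite surjective between varieties, and after replacing $C''$ by its normalisation we may apply Lemma \ref{lem:semiample_cover} once more to conclude that $M|_C$ is semiample (recalling the paper's convention that $M|_C$ stands for $\pi^*M$ on $\overline C$).

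The step I expect to be the main obstacle is the bookkeeping around the non-normality and possible non-irreducibility of $S$ and $\psi^{-1}(C)$: I must make sure that on each irreducible component the map to (a component of) $C$ is dominant — which follows from equidimensionality of $\psi$, since a component of $\psi^{-1}(C)$ mapping to a point would be two-dimensional — and that Stein factorisation behaves well enough that the finite part $C'' \to C$ is surjective onto $C$ and the divisor we track is genuinely the pullback of $M|_C$. One should also check that $\psi^*M$ restricted to $\overline S$ really is the pullback of $M|_C$ (with the paper's normalisation convention) along the composite $\overline S \to C'' \to C$, which is just functoriality of restriction and pullback but needs to be stated carefully. Apart from that, the argument is a routine iteration of Lemma \ref{lem:semiample_cover} and Lemma \ref{lem:Keel_universal_mor}, exactly as in Keel's original descent lemmas.
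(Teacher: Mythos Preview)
Your argument has a genuine gap at the final descent step. First, you have misread the paper's convention: the notation $L|_{\overline{D}}$ is defined to mean $\pi^*(L|_D)$, but the lemma asks for semiampleness of $M|_C$ on the reduced (generally non-normal, possibly reducible) curve $C$ itself, not on its normalisation. This matters: Lemma~\ref{lem:semiample_cover} requires the target to be normal, so it cannot be used to descend from any finite cover down to $C$. Concretely, take a nodal rational curve $C$ over an algebraically closed field $k \neq \Fp$ and a non-torsion degree-zero line bundle $L \in \Pic^0(C) \simeq \mathbb{G}_m$; its pullback to the normalisation $\mathbb{P}^1$ is trivial, yet $L$ is not semiample. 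Your Step~4 therefore does not go through, and normalising $S$ beforehand only makes things worse, since it can disconnect fibres and increase the degree of $C'' \to C$.

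The missing ingredient is the hypothesis that $\psi$ is a \emph{contraction}: it has connected fibres, hence by Zariski connectedness $\psi|_S \colon S \to C$ has connected fibres as well. The paper's proof takes the Stein factorisation $S \xrightarrow{v} \widetilde{C} \xrightarrow{u} C$ of $\psi|_S$ directly (without normalising $S$); then $v_*\mcO_S = \mcO_{\widetilde C}$ gives $u^*(M|_C)$ semiample by the projection formula, and since $u$ is finite with connected fibres it is a bijection between curves over an algebraically closed field, hence a finite universal homeomorphism. Now Lemma~\ref{lem:Keel_universal_mor} (not Lemma~\ref{lem:semiample_cover}) yields semiampleness of $M|_C$. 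You mentioned Lemma~\ref{lem:Keel_universal_mor} in your plan but never actually invoked it; it is precisely the tool that handles the non-normal target.
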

Note that $C$ is not necessarily irreducible.
\begin{proof}
Let 
\begin{center}
\begin{tikzcd}
S \arrow[bend left = 30]{rr}{\psi|_S} \arrow{r}{v} & \widetilde{C} \arrow{r}{u} & C
\end{tikzcd}
\end{center}
be the Stein factorisation of $\psi|_S$. Since $(\psi|_S)^*(M|_C)=\psi^*M|_S$ is semiample and $v_*\mcO_S = \mcO_{\widetilde{C}}$, the projection formula shows that $u^*(M|_C)$ is semiample. Moreover, by the Zariski connectedness theorem, $\psi$ has connected fibres, and so $\psi|_S$ and $u$ have connected fibres as well. This in turn implies that $u$ is a universal homeomorphism; indeed it is a bijective morphism between curves defined over an algebraically closed field (see \cite[Lemma 5.2]{kollar97}). Thus $M|_C$ is semiample by Lemma \ref{lem:Keel_universal_mor}.
\end{proof}

We are ready to tackle the case of nef dimension two. For the clarity of the proof we first show an MMP-free version of Proposition \ref{prop:non_vanishing2}.
\begin{lemma} \label{lem:non_vanishing2} Let $(X,\Delta)$ be a projective three-dimensional log pair defined over an algebraically closed field $k$ of characteristic $p>3$ such that the coefficients of $\Delta$ are at most one. Assume that $L \defeq K_X+\Delta$ is nef and $n(L)=2$. Then the following hold:
\begin{enumerate}
	\item there exists an effective $\mbQ$-divisor $D$ such that $L \equiv D$,
	\item if $L|_{\Supp D} \sim_{\mbQ} 0$ for some $D$ as above, then $\kappa(L) \geq 0$, and
	\item if $L|_{\Supp D} \not \equiv 0$, or $L|_{\Supp D} \sim_{\mbQ} 0$ and $L \sim_{\mbQ} D$, then $\kappa(L)= 2$.
\end{enumerate}
\end{lemma}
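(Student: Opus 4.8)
The plan is to descend $L=K_X+\Delta$ to a surface by means of the weak canonical bundle formula and then invoke the surface statement Lemma~\ref{lem:bigsurface}. Since $p>3$, Proposition~\ref{prop:nef_reduction_cbf} applies and produces: a nef reduction map $\phi\colon X\dashrightarrow Z$ for $L$; a birational morphism $g\colon Y\to X$ and an equidimensional contraction $\psi\colon Y\to Z$ resolving $\phi$, with $Z$ a smooth surface; a finite, generically purely inseparable morphism $f\colon T\to Z$ from a smooth surface; and $\mbQ$-divisors with $g^*L\sim_{\mbQ}\psi^*L_Z$, $f^*L_Z\sim_{\mbQ}K_T+\Delta_T-M$, $\Delta_T$ pseudo-effective, $M\ge 0$, and $f^*L_Z|_{\Supp M}\equiv 0$. (This is the mechanism by which the non-properness of $\phi$ is absorbed; if $\phi$ were a morphism one could use Theorem~\ref{theorem:canonical_bundle_formula_3fold} directly.) A first observation is that $L_Z$, hence $f^*L_Z$, is nef of \emph{maximal} nef dimension: if $L_Z$ were numerically trivial on the fibres of some fibration of $Z$, pulling back along $\psi$ and $g$ would give $n(L)\le 1$. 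Throughout, the divisor $D$ of (1) is $g_*\psi^*D_Z$ for an effective $D_Z$ with $D_Z\equiv L_Z$ constructed below; thus $\Supp D$ lies over $\Supp D_Z$, and no component of $\Supp D$ is $\phi$-exceptional (a $\phi$-contracted curve $C$ satisfies $L\cdot C=0$ and lies in a fibre, hence meets no component of $\Supp D$).

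\textbf{Proof of (1).} It suffices to produce an effective $\mbQ$-divisor $D_Z$ on $Z$ with $L_Z\equiv D_Z$: then $g^*L\sim_{\mbQ}\psi^*L_Z\equiv\psi^*D_Z\ge 0$, and since numerical triviality descends along the birational morphism $g$ of normal projective varieties, $L=g_*g^*L\equiv g_*\psi^*D_Z=:D\ge 0$. As $f$ is finite, it is in turn enough to show that $f^*L_Z$ is numerically equivalent to an effective divisor. Here $K_T+\Delta_T\sim_{\mbQ}f^*L_Z+M$ with $\Delta_T$ pseudo-effective; the one obstruction to an immediate application of Lemma~\ref{lem:bigsurface} is that $f^*L_Z+M$ need not be nef. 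However $\Supp M$ is $f^*L_Z$-numerically trivial, so by the Hodge index theorem its components span a negative semidefinite subspace of $N^1(T)$ — and if an effective combination of them is numerically proportional to $f^*L_Z$ then $f^*L_Z$ is itself numerically equivalent to an effective divisor and (1) follows, so we may assume that subspace is negative definite. Contracting this configuration (equivalently, absorbing $M$ by a birational/MMP step on the surface that only affects curves in $\Supp M$) reduces us to the case $M=0$, in which $f^*L_Z\sim_{\mbQ}K_T+\Delta_T$ is nef of maximal nef dimension, $\Delta_T$ is pseudo-effective, and the surface is $\mbQ$-factorial; Lemma~\ref{lem:bigsurface} then yields $K_T+\Delta_T\equiv$ (effective), i.e. $f^*L_Z\equiv$ (effective). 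Pushing forward along $f$ and then $g$ proves (1).

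\textbf{Proof of (2) and (3).} Both are checked after descent to $Z$. If $C\subseteq\Supp D$ is a curve with $L\cdot C>0$, it is not $\phi$-contracted, so its transform on $Y$ maps finitely onto a curve $C_Z\subseteq\Supp D_Z$ with $L_Z\cdot C_Z>0$; hence $L|_{\Supp D}\not\equiv 0$ forces $L_Z|_{\Supp D_Z}\not\equiv 0$, and likewise $L|_{\Supp D}\sim_{\mbQ}0$ forces $L_Z|_{\Supp D_Z}\sim_{\mbQ}0$. If $L|_{\Supp D}\not\equiv 0$, then $L_Z^2=L_Z\cdot D_Z=\sum a_i\,(L_Z\cdot C_i)>0$, so the nef divisor $L_Z$ is big; therefore $\kappa(L_Z)=2$, and since $\kappa(L)=\kappa(g^*L)=\kappa(\psi^*L_Z)=\kappa(L_Z)$ (birational invariance of $\kappa$ together with $\psi_*\mcO_Y=\mcO_Z$), $\kappa(L)=2$. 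If $L|_{\Supp D}\sim_{\mbQ}0$, then $L_Z|_{\Supp D_Z}\sim_{\mbQ}0$ is semiample, and running through the case analysis in the proof of Lemma~\ref{lem:bigsurface} — which under this semiampleness hypothesis upgrades ``numerically effective'' to ``$\mbQ$-effective'' — gives $\kappa(L_Z)\ge 0$, hence $\kappa(L)\ge 0$, proving (2). If moreover $L\sim_{\mbQ}D$, then (pulling back to $Y$, using the negativity lemma and $\psi_*\mcO_Y=\mcO_Z$) $L_Z\sim_{\mbQ}D_Z$ with $D_Z|_{\Supp D_Z}\sim_{\mbQ}0$ semiample, and the evaporation technique (see Subsubsection~\ref{ss:neftwo}) forces $L_Z$ to be big, whence $\kappa(L)=2$; this settles the remaining case of (3).

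\textbf{Main obstacle.} I expect the genuinely delicate point to be the treatment of the correction divisor $M$, which encodes the defect coming from the nef reduction map being only a rational map: after contracting the negative-definite, $f^*L_Z$-trivial curves inside $\Supp M$ one must still land on a normal, projective, $\mbQ$-factorial surface so that Lemma~\ref{lem:bigsurface} (and the evaporation argument) apply, which forces a careful choice of birational model or of the boundary $\Delta_T$. A secondary subtlety is the evaporation step in the last case of (3), which relies on Keel-type semiampleness arguments along the support of an effective divisor.
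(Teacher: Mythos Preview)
Your overall strategy matches the paper's, but there are two gaps.

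First, you misread Lemma~\ref{lem:bigsurface}: its nef divisor is $L$, not $K_X+\Delta$, and the statement is formulated precisely so as to absorb the correction term $M$. Here $f^*L_Z$ \emph{is} nef (lift a curve on $Z$ via the equidimensional $\psi$ and use $\psi^*L_Z\sim_{\mbQ}g^*L$), so Lemma~\ref{lem:bigsurface}(1) applies on $T$ directly, yielding an effective $D_T\equiv f^*L_Z$ with no further work. Your detour through contracting $\Supp M$ is unnecessary and itself problematic (the contracted surface need not be projective or $\mbQ$-factorial without additional argument).

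Second, and more seriously, statements (2) and (3) must be proved for an \emph{arbitrary} effective $D'$ with $L\equiv D'$, not only the particular $D=g_*\psi^*D_Z$ built in (1); in the application (Proposition~\ref{prop:non_vanishing2}) the divisor is supplied by a dlt modification and has no a~priori relation to your construction. The paper handles a general $D'$ by observing that $g^*D'|_{X_\mu}\sim_{\mbQ}0$ (as $D'\equiv L$) and invoking Lemma~\ref{lemma:descending_nef_line_bundles} to write $g^*D'=\psi^*D'_Z$ for an effective $D'_Z$. The implication $L|_{\Supp D'}\sim_{\mbQ}0\Rightarrow L_Z|_{\Supp D'_Z}\sim_{\mbQ}0$, which you assert as obvious, genuinely requires Lemma~\ref{lem:descend}: one takes the Stein factorisation of $\psi$ restricted over $\Supp D'_Z$ and uses that its finite part is a universal homeomorphism. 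Finally one must work on $T$, not $Z$, setting $D'_T\defeq f^*D'_Z$ and applying Lemma~\ref{lem:bigsurface}(4) to $f^*L_Z\sim_{\mbQ}K_T+\Delta_T-M$; there is no adjoint decomposition available on $Z$ itself.
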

\begin{proof}
Let $\psi \colon X \dashrightarrow Z$ be a nef reduction map for $L$. By the assumptions on the characteristic and by Proposition \ref{proposition:general_fibre_smooth}, the generic fibre $X_{\mu}$ is smooth. 

Let
\begin{center}
\begin{tikzcd}
& X \arrow[dashed]{d}{\phi} & W \arrow[swap]{l}{g} \arrow{ld}{\psi} \\
T \arrow{r}{f} & Z & 
\end{tikzcd}
\end{center}  
be a diagram as in Proposition \ref{prop:nef_reduction_cbf} with $L_Z$ being a $\mbQ$-divisor on $Z$ such that $g^*L \sim_{\mbQ} \psi^*L_Z$ and $f^*L_Z \sim_{\mbQ} K_T + \Delta_T - M$ for a pseudo-effective $\mbQ$-divisor $\Delta_T$ and an effective $\mbQ$-divisor $M$ satisfying $f^*L_Z|_{\Supp M} \equiv 0$. In particular, Lemma \ref{lem:bigsurface} implies that there exists an effective $\mbQ$-divisor $D_T$ on $T$ such that $f^*L_Z \equiv D_T$. Take 
\begin{align*}
 D_Z &\defeq {\textstyle\frac{1}{\deg f}}\, f_*D_T, \\
 D_W &\defeq \psi^*D_Z, \text{ and } \\ 
 D &\defeq g(D_Y).
\end{align*}
Therewith, $L_Z \equiv D_Z$ and $L \equiv D$, thus (1) holds (here we use Remark \ref{rem:descend_num_equiv}).

Now, assume that $L \equiv D'$ for a possibly different effective $\mbQ$-divisor $D'$ on $X$, and let $D'_W \defeq g^*D'$. Since $D' \equiv L$ and $D'$ is effective, we get that $D'|_{X_{\mu}} \sim_{\mbQ} 0$, and so Lemma \ref{lemma:descending_nef_line_bundles} implies the existence of an effective $\mbQ$-divisor $D'_Z$ on $Z$ such that $D'_W \sim_{\mbQ} \psi^*D'_Z$. Since $\psi$ is a contraction, we can pick $D'_Z$ so that in fact $D'_W = \psi^*D'_Z$. Set $D'_T \defeq f^*D'_Z$.

 If $L|_{\Supp D'} \not \equiv 0$, then $L_Z \cdot D'_Z > 0$ and $L_Z$ is big. In particular, $\kappa(L) \geq 2$. Thus, we can assume that $L|_{\Supp D'} \equiv 0$. To show (2) and (3), suppose $L|_{\Supp D'} \sim_{\mbQ} 0$. By Lemma \ref{lem:descend}, we get $L_Z|_{\Supp D'_Z} \sim_{\mbQ} 0$, and so $f^*L_Z|_{\Supp D'_T} \sim_{\mbQ} 0$. Hence, we can conclude the proof by Lemma \ref{lem:bigsurface}.
\end{proof}

\begin{remark} \label{rem:descend_num_equiv} Let $f \colon X \to Y$ be a proper and generically purely inseparable morphism between $\mbQ$-factorial varieties and let $L$ be a divisor on $X$ such that $L \equiv_f 0$. Then $L = \frac{1}{\deg f} f^*f_*L$ by the negativity lemma. In particular, if $D_1 \equiv D_2$ for some divisors $D_1$, $D_2$ on $X$, then $f_*D_1 \equiv f_*D_2$.
\end{remark}

\begin{proposition} \label{prop:non_vanishing2} Let $(X,\Delta)$ be a projective three-dimensional Kawamata log terminal pair defined over an algebraically closed field $k$ of characteristic $p>5$. Assume that $K_X+\Delta$ is nef and $n(K_X+\Delta)=2$. Then $K_X+\Delta$ is semiample.
\end{proposition}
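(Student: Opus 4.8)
The plan is to reduce $K_X+\Delta$ to the surface $Z$ underlying its nef reduction map and, after a boundary-enlarging dlt modification, to place ourselves in the situation covered by Lemma~\ref{lem:non_vanishing2}. Since $\kappa(K_X+\Delta)\le n(K_X+\Delta)=2$ for any nef divisor, and since a nef adjoint divisor $K_X+\Delta$ of Kodaira dimension $2$ on a three-dimensional dlt pair over $k$ (recall $p>5$) is semiample by \cite[Theorem 1.3]{waldronabundance}, it is enough to prove $\kappa(K_X+\Delta)=2$. First, replacing $X$ by a small $\mbQ$-factorialisation (available by the threefold MMP, \cite{hx13,bw14}) and using Lemma~\ref{lem:semiample_cover}, we may assume $X$ is $\mbQ$-factorial; put $L\defeq K_X+\Delta$. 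By Lemma~\ref{lem:non_vanishing2}(1) there is an effective $\mbQ$-divisor $D$ with $L\equiv D$. We then apply Lemma~\ref{lem:abundance_dlt_modification} to $(X,\Delta)$ with this $D$ and replace $(X,\Delta)$ by the resulting pair: from now on $(X,\Delta)$ is a $\mbQ$-factorial dlt threefold with $L$ nef, $n(L)=2$, $L\equiv D$, and $\Supp D\subseteq\lfloor\Delta\rfloor$ --- the passage of the final conclusion back to the original pair being handled by parts (3) and (5) of that lemma (see below).

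Write $S\defeq\Supp D$; it is a union of components of $\lfloor\Delta\rfloor$, so adjunction gives $L|_S=K_S+\Delta_S$ for a semi-log-canonical surface pair $(S,\Delta_S)$, with $K_S+\Delta_S$ nef, hence semiample by abundance for semi-log-canonical surfaces (\cite[Theorem 1.3]{waldronlc}). If $L|_S\not\equiv 0$, then Lemma~\ref{lem:non_vanishing2}(3), applied with $D$, gives $\kappa(L)=2$ at once. If $L|_S\equiv 0$, then $L|_S$ is semiample and numerically trivial, so $L|_S\sim_{\mbQ}0$, and Lemma~\ref{lem:non_vanishing2}(2) yields $\kappa(L)\ge 0$, say $L\sim_{\mbQ}D'$ for an effective $\mbQ$-divisor $D'$. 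In this remaining case we rerun Lemma~\ref{lem:abundance_dlt_modification} with $D'$; as $D'\sim_{\mbQ}L$, the last sentence of that lemma gives a $\mbQ$-factorial dlt pair, again denoted $(X,\Delta)$, with $L=K_X+\Delta\sim_{\mbQ}E$ for an effective $\mbQ$-divisor $E$, $\Supp E\subseteq\lfloor\Delta\rfloor$, $L$ nef, $n(L)=2$, and $\kappa(L)$ unchanged (the inequalities in Lemma~\ref{lem:abundance_dlt_modification}(3) collapse because $D'\sim_{\mbQ}L$). Running the boundary-restriction argument once more with $S\defeq\Supp E$: if $L|_S\not\equiv 0$ we again get $\kappa(L)=2$ by Lemma~\ref{lem:non_vanishing2}(3), and if $L|_S\equiv 0$ then $L|_S\sim_{\mbQ}0$ and, since also $L\sim_{\mbQ}E$, the second alternative of Lemma~\ref{lem:non_vanishing2}(3) gives $\kappa(L)=2$. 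In every branch $\kappa(L)=2$, so $L$ is semiample by \cite[Theorem 1.3]{waldronabundance}; unwinding the modifications (Lemma~\ref{lem:abundance_dlt_modification}, Lemma~\ref{lem:semiample_cover}) then shows $K_X+\Delta$ is semiample for the pair we started with.

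The substantive ingredient is Lemma~\ref{lem:non_vanishing2}, whose proof descends $L$ through a purely inseparable cover of the nef reduction base (Proposition~\ref{prop:nef_reduction_cbf}, resting on the weak canonical bundle formula) and then invokes the surface non-vanishing statement Lemma~\ref{lem:bigsurface} --- which is precisely what prevents the descended divisor from being merely strictly nef of maximal nef dimension yet failing to be $\mbQ$-effective; the case analysis above only feeds the right auxiliary divisor into it. I expect the main obstacle to be the bookkeeping around the boundary-enlarging dlt modifications: one must check that after up to two replacements of $(X,\Delta)$ by a dlt minimal model with a larger reduced boundary, the Kodaira dimension does not jump uncontrollably and, above all, that semiampleness of the modified adjoint divisor descends to $K_X+\Delta$ on the original $X$. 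This is exactly what the precise conclusions (3) and (5) of Lemma~\ref{lem:abundance_dlt_modification} are designed to provide, and verifying that they suffice is the delicate point.
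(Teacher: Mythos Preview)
Your overall strategy coincides with the paper's: produce $D\equiv L$ via Lemma~\ref{lem:non_vanishing2}(1), pass to a dlt modification with $\Supp D\subseteq\lfloor\Delta\rfloor$ via Lemma~\ref{lem:abundance_dlt_modification}, use slc abundance on the boundary to feed Lemma~\ref{lem:non_vanishing2}(2)--(3), and iterate once with an honest $\mbQ$-section. The second modification indeed preserves $\kappa$ because $D'\sim_{\mbQ}L$ collapses the inequalities in Lemma~\ref{lem:abundance_dlt_modification}(3), exactly as you say.

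The gap is in the first unwinding. After the first application of Lemma~\ref{lem:abundance_dlt_modification} you work on a pair $(Y,\Delta_Y)$ and eventually establish $\kappa(K_Y+\Delta_Y)=2$. Parts (3) and (5) of that lemma give only
\[
\kappa(K_X+\Delta)\ \le\ \kappa(K_Y+\Delta_Y)\ \le\ \kappa(K_X+\Delta+rD)
\]
together with an isomorphism $(Y\setminus\Supp E,\Delta_Y)\simeq(X\setminus\Supp D,\Delta)$. Neither of these lets you conclude $\kappa(K_X+\Delta)=2$, and Lemma~\ref{lem:semiample_cover} does not apply: on a common resolution one has $h^*(K_Y+\Delta_Y)=g^*(K_X+\Delta)+F$ with $F\ge 0$ genuinely nonzero (its pushforward is supported in $\Supp D$), so $K_Y+\Delta_Y$ is not the pullback of $K_X+\Delta$ under any proper surjective map. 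Semiampleness of $K_Y+\Delta_Y$ therefore says nothing directly about $K_X+\Delta$.

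The paper closes this gap by one extra step you are missing. From $\kappa(K_X+\Delta+rD)\ge 2$ and $D\equiv K_X+\Delta$ it observes that necessarily $(K_X+\Delta)|_{\Supp D}\not\equiv 0$: otherwise $K_X+\Delta+rD\equiv(1+r)(K_X+\Delta)$ would have $(K_X+\Delta+rD)^2\cdot H=(1+r)^2(K_X+\Delta)\cdot D\cdot H=0$ for every ample $H$, contradicting $\kappa\ge 2$. Then Lemma~\ref{lem:non_vanishing2}(3) is applied \emph{to the original klt pair} $(X,\Delta)$ with this $D$, yielding $\kappa(K_X+\Delta)=2$ directly. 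Insert this argument in place of your appeal to (3) and (5), and the proof is complete.
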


\begin{proof}
We only need to show that $\kappa(K_X+\Delta) = 2$; indeed in this case the semiampleness of $K_X+\Delta$ follows from \cite[Theorem 1.3]{waldronabundance}. Note that $\kappa(K_X+\Delta) \leq n(K_X+\Delta) = 2$. By replacing $X$ by its $\mbQ$-factorialisation, we can assume that $X$ is $\mbQ$-factorial.

By Lemma \ref{lem:non_vanishing2}, there exists an effective $\mbQ$-divisor $D$ satisfying $K_X+\Delta \equiv D$. 
Let $(Y,\Delta_Y)$ and $E_Y$ be as in Lemma \ref{lem:abundance_dlt_modification}, that is for some $r>0$:
\begin{itemize}
	\item $K_Y+\Delta_Y$ is nef,
	\item $n(K_Y+\Delta_Y)=n(K_X+\Delta)$ and $\kappa(K_Y+\Delta_Y) \leq \kappa(K_X + \Delta + rD)$,
	\item $K_Y+\Delta_Y \equiv E_Y$ where $E_Y$ is an effective $\mbQ$-divisor such that $\Supp E_Y \subseteq \lfloor \Delta_Y \rfloor$.
\end{itemize}

By \cite[Theorem 1.3]{waldronlc}, $(K_Y+\Delta_Y)|_{\lfloor \Delta_Y \rfloor}$ is semiample, and therefore $(K_Y+\Delta_Y)|_{\Supp E_Y}$ is semiample as well. By Lemma \ref{lem:non_vanishing2} applied to $(Y,\Delta_Y)$ and $E_Y$, we see that $\kappa(K_Y+\Delta_Y) \geq 0$.

We claim that in fact $\kappa(K_Y+\Delta_Y)\geq 2$. 
To this end, we apply Lemma \ref{lem:abundance_dlt_modification} to $(Y,\Delta_Y)$ and a $\mbQ$-section of $K_Y+\Delta_Y$,  obtaining a dlt pair $(Z,\Delta_{Z})$ satisfying
\begin{itemize}
	\item $K_{Z}+\Delta_{Z}$ is nef,
	\item $n(K_{Z}+\Delta_{Z})=n(K_Y+\Delta_Y)$ and $\kappa(K_{Z}+\Delta_{Z}) = \kappa(K_Y+\Delta_Y)$,
	\item $K_{Z}+\Delta_{Z} \sim_{\mbQ} E_{Z}$ where $E_{Z}$ is an effective $\mbQ$-divisor such that $\Supp E_{Z} \subseteq \lfloor \Delta_{Z} \rfloor$.
\end{itemize}
Arguing as above, we get that $(K_{Z}+\Delta_{Z})|_{\Supp E_{Z}}$ is semiample. Lemma \ref{lem:non_vanishing2} (3) applied to $(Z,\Delta_{Z})$ and $E_{Z}$ implies $\kappa(K_Y+\Delta_Y) = \kappa(K_{Z}+\Delta_{Z}) = 2$.

We are left to show $\kappa(K_X+\Delta)=2$. Since $\kappa(K_Y+\Delta_Y)=2$, we have $\kappa(K_X+\Delta + rD) \geq 2$. However, given that $K_X+\Delta \equiv D$, this is only possible when $(K_X+\Delta)|_D \not \equiv 0$. By (3) in Lemma \ref{lem:non_vanishing2}, we can conclude the proof of the proposition.   
\end{proof}

\subsubsection{Proof of Theorem \ref{thmi:non_vanishing} and Theorem \ref{thmi:abundance}}

\begin{theorem}[{Theorem \ref{thmi:non_vanishing}}] \label{thm:nonvanishing_main}  Let $(X,\Delta)$ be a projective Kawamata log terminal pair of dimension three defined over an algebraically closed field $k$ of characteristic $p>5$. Assume that $K_X+\Delta$ is pseudo-effective. Then $\kappa(K_X+\Delta) \geq 0$.
\end{theorem}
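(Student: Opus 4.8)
\emph{Proof plan.} The statement will be deduced from Theorem~\ref{thmi:abundance} together with the non-vanishing theorem of Xu and Zhang for terminal threefolds (\cite{xuzhang18}). The first step is to reduce to the case where $K_X+\Delta$ is nef. Since $K_X+\Delta$ is pseudo-effective and $(X,\Delta)$ is a three-dimensional klt pair over a field of characteristic $p>5$, the minimal model program for such pairs (\cite{hx13}, \cite{birkar13}, \cite{bw14}) produces, after a small $\mbQ$-factorialization and a terminating $(K_X+\Delta)$-MMP, a log minimal model $(X',\Delta')$ with $K_{X'}+\Delta'$ nef; as each step of a $(K_X+\Delta)$-MMP leaves the spaces $H^0(\lfloor m(K_X+\Delta)\rfloor)$ unchanged, one has $\kappa(K_X+\Delta)=\kappa(K_{X'}+\Delta')$, so we may assume $K_X+\Delta$ is nef and $X$ is $\mbQ$-factorial. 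If $n(K_X+\Delta)\le 2$ we are immediately done, since then $K_X+\Delta$ is semiample by Theorem~\ref{thmi:abundance} (established in Propositions~\ref{prop:non_vanishing0}, \ref{prop:non_vanishing1} and~\ref{prop:non_vanishing2}), hence $\mbQ$-effective. Thus the crux is the maximal nef dimension case $n(K_X+\Delta)=3$.

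For this case the plan is to reduce to Xu--Zhang's theorem by a boundary-lowering argument in the spirit of \cite{kkm94}. If $K_X$ (with empty boundary) is already pseudo-effective, then passing to a terminalization $g\colon Y\to X$, for which $g_*\mcO_Y(\lfloor mK_Y\rfloor)=\mcO_X(\lfloor mK_X\rfloor)$ and $Y$ is terminal, and invoking \cite{xuzhang18} gives $\kappa(K_X)=\kappa(K_Y)\ge 0$; since $\Delta\ge 0$ we conclude $\kappa(K_X+\Delta)\ge\kappa(K_X)\ge 0$. Otherwise $\Delta\ne 0$, and one gradually decreases the coefficients of $\Delta$: for small $\varepsilon>0$ one runs a $(K_X+(1-\varepsilon)\Delta)$-MMP, whose contracted extremal rays $R$ are automatically $K_X$-negative, so that the bound on their lengths on a terminal threefold in characteristic $p>5$ (as in \cite{kkm94}) forces $-K_X\cdot R$, and then (using $(K_X+\Delta)\cdot R\ge 0$) also $\Delta\cdot R$, to be bounded; for $\varepsilon$ small enough relative to the Cartier index of $K_X+\Delta$ this makes the program $(K_X+\Delta)$-trivial, hence it preserves both the nefness of $K_X+\Delta$ and $\kappa(K_X+\Delta)$. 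Iterating such reductions one eventually arrives either at the situation $\Delta=0$ with the (terminal) variety carrying a nef, hence pseudo-effective, canonical divisor, where \cite{xuzhang18} applies as above, or at a Mori fiber space $\pi\colon Y\to S$ with $\dim S<3$ on which $K_Y+\Delta_Y$ is nef, has the same Kodaira dimension as $K_X+\Delta$, and is numerically trivial on the (contracted) general fiber; in the latter case $n(K_Y+\Delta_Y)\le\dim S\le 2$, so Theorem~\ref{thmi:abundance} finishes the argument.

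The step I expect to be the main obstacle is precisely this last reduction: organizing the boundary-lowering minimal model programs so that they provably terminate in one of the two favourable outcomes, while keeping exact track of the nef dimension, the Kodaira dimension and terminality throughout, and, in particular, controlling what happens at the pseudo-effective threshold of $K_X+t\Delta$ when $K_X$ itself is not pseudo-effective (where a priori the auxiliary program may cease to be $(K_X+\Delta)$-trivial and produce a Mori fiber space on which one must instead extract $\mbQ$-effectivity of $K_Y+\Delta_Y$ from the relative positivity of $-K_Y$ and the length bound). Everything else is either standard (the reduction to the nef case) or already proved in this paper (the case $n\le 2$, Theorem~\ref{thmi:abundance}).
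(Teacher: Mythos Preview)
Your overall strategy coincides with the paper's: reduce to the nef case, invoke Theorem~\ref{thmi:abundance} when $n(K_X+\Delta)\le 2$, and for maximal nef dimension perform a \cite{kkm94}-style boundary reduction to land on Xu--Zhang. The reduction to a terminal $\mbQ$-factorial model is also the same (the paper takes a sub-terminalisation at the outset rather than a separate terminalization when $K_X$ happens to be pseudo-effective).

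The gap is in the mechanism of the boundary-lowering step. You assert that for a $(K_X+(1-\varepsilon)\Delta)$-negative extremal ray $R$ the length bound forces $-K_X\cdot R$ to be bounded. But the available length bound (e.g.\ \cite[Theorem 1.1]{bw14}) only gives a curve $C$ generating $R$ with $-(K_X+(1-\varepsilon)\Delta)\cdot C\le 6$; since $R$ need not be $K_X$-extremal, this does not control $-K_X\cdot C$ directly. Writing $a=(K_X+\Delta)\cdot C\ge 0$ and $b=-K_X\cdot C$, the two constraints $a-\varepsilon(a+b)<0$ and $a-\varepsilon(a+b)\ge -6$ only yield $a<\tfrac{\varepsilon}{1-\varepsilon}\,b$ with no independent bound on $b$, so one cannot conclude $a<\tfrac{1}{d}$ and hence $a=0$. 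This also undermines termination of your iteration, because without knowing that every step is $K_X$-negative you cannot appeal to termination of the terminal MMP, and the Cartier index of $K_X+\Delta$ may jump after each step so a single $\varepsilon$ chosen in advance need not work throughout.

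The paper circumvents this by working with the nef threshold instead of an arbitrary small perturbation. After passing to a terminal $\mbQ$-factorial model with $K_X+\Delta$ nef, let $\lambda\in[0,1]$ be minimal with $K_X+\lambda\Delta$ nef. If $\lambda=0$, Xu--Zhang applies; otherwise $\tfrac{1}{\lambda}(K_X+\lambda\Delta)=(K_X+\Delta)+\tfrac{1-\lambda}{\lambda}K_X$, and \cite[Lemma 2.1]{kkm94} (valid in characteristic $p>5$ by the length bound of \cite{bw14}) produces a genuine $K_X$-extremal ray $R$ with $(K_X+\lambda\Delta)\cdot R=0$. If $n(K_X+\lambda\Delta)\le 2$ one applies Theorem~\ref{thmi:abundance}; otherwise the contraction of $R$ is birational and $K_X$-negative, so terminality is preserved and the process terminates by termination of flips and divisorial contractions for terminal threefolds. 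Replacing your $(K_X+(1-\varepsilon)\Delta)$-MMP by this nef-threshold iteration makes the argument go through.
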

The proof follows from Proposition \ref{prop:non_vanishing0}, Proposition \ref{prop:non_vanishing1}, Proposition \ref{prop:non_vanishing2}, and \cite[Theorem 1.1]{xuzhang18} by ways of \cite[Lemma 2.4]{kkm94}. For the convenience of the reader we append the argument of \cite{kkm94} below. 
\begin{proof}
Since the canonical ring is preserved under any MMP, it is enough to show the theorem for a minimal model of $(X,\Delta)$. In particular, we can replace $(X,\Delta)$ by a minimal model (see \cite[Theorem 1.2]{birkar13}) and assume that $K_X+\Delta$ is nef. Moreover, by replacing $(X,\Delta)$ by its sub-terminalisation (see \cite[Theorem 1.7]{birkar13}), we can assume that $X$ is terminal and $\mbQ$-factorial. 

Let $0 \leq \lambda \leq 1$ be the smallest number such that $K_X+ \lambda \Delta$ is nef. Since $K_X+\Delta - (K_X+\lambda \Delta)$ is $\mbQ$-effective, it is enough to show that $K_X+\lambda \Delta$ is $\mbQ$-effective. If $\lambda = 0$, then the result follows from \cite[Theorem 1.1]{xuzhang18}, thus we can assume $\lambda > 0$.

We have $\frac{1}{\lambda}(K_X+\lambda \Delta) = (K_X+\Delta) + \frac{1-\lambda}{\lambda}K_X$, and so there exists a $K_X$-extremal ray $R$ such that $(K_X+\lambda \Delta)\cdot R=0$ by \cite[Lemma 2.1]{kkm94} (though stated only for $\mathbb{C}$, the result is a direct consequence of the existence of bounds on lengths of extremal rays, true for klt pairs in positive characteristic by \cite[Theorem 1.1]{bw14}). In particular, $\lambda \in \mbQ$.

If $n(K_X+\lambda \Delta) \leq 2$, then $K_X+\lambda \Delta$ is semiample by Proposition \ref{prop:non_vanishing0}, Proposition \ref{prop:non_vanishing1}, and Proposition \ref{prop:non_vanishing2}, thus we can assume $n(K_X+\lambda \Delta)=3$. 

Let $f \colon X \dashrightarrow Y$ be the contraction of the ray $R$ (or the corresponding flip if the contraction is small). Since $n(K_X+\lambda \Delta)=3$, the map $f$ is birational. Set $\Delta_Y = f_* \Delta$. It is enough to show that the nef $\mbQ$-divisor $K_Y + \lambda \Delta_Y$ is $\mbQ$-effective, hence we can replace $(X,\Delta)$ by $(Y,\lambda \Delta_Y)$ and proceed as above by choosing a new $\lambda$. We can conclude the proof that $\kappa(K_X+\Delta)\geq 0$ as any sequence of flips and divisorial contractions for terminal varieties terminates.
\end{proof}

\begin{remark} As it has been communicated to us by Tanaka, Theorem \ref{thm:nonvanishing_main} holds in the relative setting for projective contractions $\phi \colon X \to Z$ such that $K_X+\Delta$ is relatively pseudo-effective. Indeed, by arguing as above we can assume that $K_X+\Delta$ is relatively nef. By \cite{hnt}, we can compactify $\phi$ to a morphism of projective varieties, and, by running the MMP over the base, we can assume that $K_X+\Delta$ is still relatively nef. Using the cone theorem, we can find an ample divisor $A$ on $Z$ such that $K_X+\Delta + \phi^*A$ is globally nef. By \cite[Theorem 1]{tanaka_relative17}, we can assue that $(X,\Delta+\phi^*A)$ is klt. Hence, $\kappa(K_X+\Delta+\phi^*A)\geq 0$ by Theorem \ref{thm:nonvanishing_main}, and so $K_X+\Delta$ is relatively $\mbQ$-effective.
\end{remark}

\begin{proof}[Proof of Theorem \ref{thmi:abundance}]
This follows directly from Proposition \ref{prop:non_vanishing0}, Proposition \ref{prop:non_vanishing1}, and Proposition \ref{prop:non_vanishing2}.
\end{proof}

Further, we obtain the following result.
\begin{corollary} \label{cori:albabundance} Let $(X,\Delta)$ be a projective Kawamata log terminal pair of dimension three defined over an algebraically closed field $k$ of characteristic $p>5$. Assume that $\dim \mathrm{Alb}(X) > 0$ and $K_X+\Delta$ is nef. Then $K_X+\Delta$ is semiample. 
\end{corollary}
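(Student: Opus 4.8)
The plan is to combine the log non-vanishing theorem established above (Theorem~\ref{thm:nonvanishing_main}) with Zhang's work on abundance for threefolds carrying a non-trivial Albanese map. The crucial point is that \cite{zhang17} reduces the log abundance conjecture for projective klt threefold pairs $(X,\Delta)$ with $K_X+\Delta$ nef and $\dim\mathrm{Alb}(X)>0$ to the log non-vanishing conjecture in the same setting---unconditionally when $\Delta=0$, and up to the non-vanishing input in the klt-boundary case. So it is enough to supply $\kappa(K_X+\Delta)\ge 0$.

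First I would observe that since $K_X+\Delta$ is nef it is in particular pseudo-effective, whence Theorem~\ref{thm:nonvanishing_main} applies and yields $\kappa(K_X+\Delta)\ge 0$; equivalently $K_X+\Delta\sim_{\mbQ}D$ for some effective $\mbQ$-divisor $D$. Feeding this into Zhang's reduction then gives that $K_X+\Delta$ is semiample, which is the assertion.

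To make the last step airtight in the klt-boundary case (where \cite{zhang17} is only partial), I would argue via the nef reduction map. If $n(K_X+\Delta)\le 2$, then semiampleness is immediate from Theorem~\ref{thmi:abundance}. If $n(K_X+\Delta)=3$, one still has the Albanese morphism $a\colon X\to\mathrm{Alb}(X)$ available, and I would run the case analysis on whether $a(X)$ is a curve, a surface, or a threefold, exactly as in \cite{zhang17} and in the proof of Proposition~\ref{prop:non_vanishing0}: invoking abundance for curves and surfaces on the fibres, and, when $a(X)$ is a surface, the canonical bundle formula Theorem~\ref{theorem:canonical_bundle_formula_main} applied to the connected-fibre part of the Stein factorisation of $a$---now that $\kappa(K_X+\Delta)\ge0$ is known.

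The hard part is essentially bookkeeping rather than new geometry: verifying that the statements of \cite{zhang17}, originally formulated for $\Delta=0$ and only partially for klt $\Delta$, indeed become unconditional once the log non-vanishing theorem of this paper is available, uniformly over the Albanese dimensions $1,2,3$. I do not expect any new geometric ingredient beyond Theorem~\ref{thm:nonvanishing_main}, Theorem~\ref{thmi:abundance}, and \cite{zhang17}.
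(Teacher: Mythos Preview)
Your proposal reaches the goal but takes a longer route than the paper and rests on a slight misreading of what \cite{zhang17} leaves open. Zhang's Theorem~1.2 does not reduce log abundance for klt $(X,\Delta)$ with non-trivial Albanese to non-vanishing; it proves semiampleness outright except in two explicit geometric configurations, namely $\dim S = 2$ with $n((K_X+\Delta)|_G)=0$, or $\dim S = 1$ with $n((K_X+\Delta)|_G)=1$, where $g \colon X \to S$ is the connected-fibres part of the Stein factorisation of the Albanese map and $G$ its generic fibre. Knowing $\kappa(K_X+\Delta)\ge 0$ does not by itself close these gaps, so your first step via Theorem~\ref{thm:nonvanishing_main} does not feed into Zhang in the way you suggest.

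The paper's proof is shorter and avoids non-vanishing entirely. The key observation, which replaces your case analysis in the $n(K_X+\Delta)=3$ branch, is that in both of Zhang's exceptional cases one automatically has $n(K_X+\Delta) \le 2$: in each exception the generic fibre $G$ of $g$ is swept out by curves on which $K_X+\Delta$ is numerically trivial, so the global nef dimension drops below $3$. Theorem~\ref{thmi:abundance} then finishes the proof directly. Your fallback strategy of splitting on $n(K_X+\Delta)$ is therefore exactly the right move, but the $n=3$ branch needs no further work beyond citing Zhang, since his exceptions simply cannot occur there; the appeal to Theorem~\ref{thm:nonvanishing_main} and the gesture towards Proposition~\ref{prop:non_vanishing0} are both unnecessary.
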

\begin{proof}
Let $g \colon X \to S$ be the connected-fibres part of the Stein factorisation of the albanese morphism, and let $G$ be the generic fibre of $g$. Then $(K_X+\Delta)|_G$ is semiample by abundance for curves and surfaces (see \cite[Theorem 1.1]{tanakaImperfect}), and so $\kappa((K_X+\Delta)|_G) = n((K_X+\Delta)|_G)$.

In \cite[Theorem 1.2]{zhang17}, it is shown that $K_X+\Delta$ is semiample, except when:
\begin{itemize}
	\item $\dim S = 2$ and $n((K_X+\Delta)|_G)=0$, or
	\item $\dim S =1$ and $n((K_X+\Delta)|_G)=1$.
\end{itemize}
In both of these cases $n(K_X+\Delta) < 3$, so we can conclude the proof by Theorem \ref{thmi:abundance}.
\end{proof}

\begin{remark} \label{remark:logabundance}
The proof of the log abundance conjecture for three-dimensional klt pairs in characteristic zero (see \cite{kkm94}) consists of three steps: showing abundance for terminal varieties, showing that any effective nef divisor on a klt Calabi-Yau variety is semiample, and showing log abundance for when $n(K_X+\Delta)\leq 2$. The proof of the second step is quite similar in nature to the proof of the first one. Note that Theorem \ref{thmi:abundance} concludes the third step of the proof of log abundance for threefolds in positive characteristic. 
\end{remark}

\subsection{Low characteristic MMP}
In this subsection, we show Theorem \ref{thmi:bpf} and Theorem \ref{thmi:nonvanishing_bpf}.

\begin{lemma} \label{lem:non_vanishing_low} Let $(X,\Delta)$ be a projective three-dimensional log pair defined over an algebraically closed field $k$ of characteristic $p>2$ such that $\Delta$ has coefficients at most one. Let $L$ be a $\mbQ$-Cartier $\mbQ$-divisor on $X$ such that $L - (K_X+\Delta)$ is nef and big, and $n(L)=2$. Then the following hold:
\begin{itemize}
\item if $\lfloor \Delta \rfloor = 0$, then $\kappa(L)=2$,
\item if $k=\Fp$, then $L$ is semiample.
\end{itemize}
\end{lemma}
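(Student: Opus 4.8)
The plan is to descend $L$ along its nef reduction map to a surface and then invoke the surface results of Theorem \ref{thm:mnw_surface}, Theorem \ref{thm:artin} and Lemma \ref{lem:bigsurface}. Set $N \defeq L-(K_X+\Delta)$, so that $N$ is nef and big and $L = K_X+\Delta+N$, and let $\phi\colon X\dashrightarrow Z$ be a nef reduction map of $L$ (note that $L$ is nef, which is implicit in $n(L)$ being defined). Since $n(L)=2$, the target $Z$ is a surface and the generic fibre $X_{\mu}$ of $\phi$ is a curve. On a general fibre $F$ of $\phi$ one has $L\cdot F=0$; writing $N\sim_{\mbQ}A+E$ with $A$ ample and $E\geq 0$, a general $F$ is not contained in $\Supp E$ and moves in a family covering $X$, so $N\cdot F>0$. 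Hence $(K_X+\Delta)\cdot F<0$ and $\deg K_{X_{\mu}}<0$, so $X_{\mu}$ is smooth by Proposition \ref{proposition:general_fibre_smooth}(1) --- this is precisely why the bound $p>2$ suffices here.

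I would then apply Proposition \ref{prop:nef_reduction_cbf} together with Remark \ref{remark:cbf_pseudoeffective} to $L=K_X+\Delta+N$ (legitimate in characteristic $p>2$ by the previous paragraph), obtaining the diagram of Proposition \ref{prop:nef_reduction_cbf}: a birational morphism $g\colon Y\to X$, an equidimensional contraction $\psi\colon Y\to Z$, and a finite, generically purely inseparable morphism $f\colon T\to Z$, with $Y$ normal and $Z$, $T$ smooth, such that $g^*L\sim_{\mbQ}\psi^*L_Z$ for a $\mbQ$-divisor $L_Z$ on $Z$ and
\[
f^*L_Z\sim_{\mbQ}K_T+\Delta_T-M
\]
for a pseudo-effective $\mbQ$-divisor $\Delta_T$ and an effective $\mbQ$-divisor $M$ with $f^*L_Z|_{\Supp M}\equiv 0$. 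Moreover, when $\lfloor\Delta\rfloor=0$ one can take $\Delta_T$ to be big: writing $N\sim_{\mbQ}N'+\epsilon E$ with $N'\defeq(1-\epsilon)N+\epsilon A$ ample and $0<\epsilon\ll 1$, we have $L=K_X+(\Delta+\epsilon E)+N'$ with $\lfloor\Delta+\epsilon E\rfloor=0$, so the ``furthermore'' part of Theorem \ref{theorem:canonical_bundle_formula_main}, applied through the perturbation argument of Remark \ref{remark:cbf_pseudoeffective}, yields the bigness of $\Delta_T$. Finally, since $n(L)=2$, the nef dimension of $L_Z$ on $Z$ and of $f^*L_Z$ on $T$ also equals $2$ (the nef dimension is unchanged under birational pullback, under pullback by the contraction $\psi$, and under the finite cover $f$), so $f^*L_Z$ is nef of maximal nef dimension on the surface $T$.

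If $\lfloor\Delta\rfloor=0$, then $\Delta_T$ is big, and Lemma \ref{lem:bigsurface} applied to $f^*L_Z\sim_{\mbQ}K_T+\Delta_T-M$ (the case of a big boundary, via a case analysis over the classification of surfaces) shows that $f^*L_Z$ is big, so $\kappa(f^*L_Z)=2$; since $g$ is birational, $g^*L\sim_{\mbQ}\psi^*L_Z$ with $\psi$ a contraction, and $f$ is generically purely inseparable, we obtain $\kappa(L)=\kappa(g^*L)=\kappa(\psi^*L_Z)=\kappa(L_Z)=\kappa(f^*L_Z)=2$, and together with $\kappa(L)\leq n(L)=2$ this gives $\kappa(L)=2$. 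If $k=\Fp$, then semiampleness is unchanged under the birational morphism $g$ and the proper surjective morphism $\psi$ (Lemma \ref{lem:semiample_cover}) and under the finite universal homeomorphism $f$ (Lemma \ref{lem:Keel_universal_mor}), so it suffices to show that $f^*L_Z$ is semiample on the smooth projective $\Fp$-surface $T$. Now $f^*L_Z$ is nef; if $(f^*L_Z)^2>0$ it is also big and we conclude by Theorem \ref{thm:artin}, while if $(f^*L_Z)^2=0$ then $f^*L_Z\equiv D_T$ for an effective $\mbQ$-divisor $D_T$ by Lemma \ref{lem:bigsurface}, and since numerically trivial divisors over $\Fp$ are torsion (Lemma \ref{lemma:fp}) we may assume $f^*L_Z\sim_{\mbQ}D_T$; as $D_T^2=0$ and $D_T$ is nef, $f^*L_Z$ restricts to a numerically trivial --- hence torsion, hence semiample --- divisor on each component of $D_T$, and more generally on every curve in the null locus of $f^*L_Z$, so $f^*L_Z$ is semiample by Keel's semiampleness criterion \cite{keel99}.

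The step I expect to be the main obstacle is controlling the error divisor $M$ in the descended canonical bundle formula --- it appears precisely because the nef reduction map $\phi$ is only a rational map --- and making sure that the statements needed on the surface ($f^*L_Z$ is genuinely nef, $\Supp M$ lies in its null locus, and bigness or semiampleness holds) transport back down the tower $T\to Z\leftarrow Y\to X$ to conclusions about $L$ on $X$. The bigness of $\Delta_T$ in the first part, which is what upgrades $\kappa(L)\geq 0$ to $\kappa(L)=2$, also depends delicately on the hypothesis $\lfloor\Delta\rfloor=0$: this is what keeps the perturbed boundary $\Delta+\epsilon E$ within the admissible range of coefficients when the bigness of $N$ is shifted onto the boundary.
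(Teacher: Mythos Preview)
Your approach is essentially the paper's own: descend $L$ to the surface $T$ via Proposition~\ref{prop:nef_reduction_cbf} and Remark~\ref{remark:cbf_pseudoeffective}, then invoke Lemma~\ref{lem:bigsurface}. The reduction to a big boundary on the threefold (absorbing a small piece of $N$ into $\Delta$ so that Proposition~\ref{prop:nef_reduction_cbf} applies already for $p>2$ and its ``Moreover'' clause yields $\Delta_T$ big) is exactly right, and the transport of $\kappa$ and of semiampleness along $g$, $\psi$, $f$ is handled correctly.

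There is one genuine slip in the $\Fp$ case. When $(f^*L_Z)^2=0$, Keel's criterion is vacuous: the exceptional locus $\mathbb{E}(f^*L_Z)$ is the closure of the union of subvarieties on which $f^*L_Z$ is not big, and since $f^*L_Z$ is not big on $T$ itself, $\mathbb{E}(f^*L_Z)=T$. So checking semiampleness on curves in the ``null locus'' does not suffice. You should simply cite Lemma~\ref{lem:bigsurface}(3) directly, which already asserts that over $\Fp$ the nef divisor $f^*L_Z\sim_{\mbQ}K_T+\Delta_T-M$ with $\Delta_T$ pseudo-effective and $n(f^*L_Z)=2$ is semiample (its proof goes through Theorem~\ref{thm:mnw_surface} and an argument that forces $M=0$). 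Your detour through Theorem~\ref{thm:artin} in the big case is harmless but also subsumed by Lemma~\ref{lem:bigsurface}(3).
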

\begin{proof}
The proof follows from Proposition \ref{prop:nef_reduction_cbf}, Remark \ref{remark:cbf_pseudoeffective}, and Lemma \ref{lem:bigsurface} analogously to the proof of Lemma \ref{lem:non_vanishing2}.
\end{proof}

Note that in general the log canonical base point free theorem is false over uncountable algebraically closed fields (see \cite{MNW15,NW17}). 
\begin{theorem} (Theorem \ref{thmi:bpf}) \label{thm:bpf} Let $(X,\Delta)$ be a projective three-dimensional log pair defined over $\Fp$ for $p>2$ and let $L$ be a nef $\mbQ$-Cartier $\mbQ$-divisor such that $L-(K_X+\Delta)$ is nef and big. Assume that
\begin{itemize}
	\item $(X,\Delta)$ is log canonical, or
	\item the coefficients of $\Delta$ are at most one and each irreducible component of $\lfloor \Delta \rfloor$ is normal.
\end{itemize}
Then $L$ is semiample.
\end{theorem}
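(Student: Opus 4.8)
The plan is to split into cases according to the nef dimension $n \defeq n(L)$ of $L$, which is well defined over $\Fp$ by Proposition~\ref{prop:nef_reduction_map_fp}; since $\dim X = 3$ we have $n \in \{0,1,2,3\}$. Replacing $L$ by a sufficiently divisible multiple we may assume $L$ is Cartier, as this preserves nefness, bigness, semiampleness and the nef reduction map, and also preserves the hypothesis that $L-(K_X+\Delta)$ be nef and big (for $m \ge 1$ write $mL-(K_X+\Delta)=(m-1)L+(L-(K_X+\Delta))$, a sum of a nef and a nef-and-big divisor).

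If $n=0$ then $L \equiv 0$, so $L$ is torsion by Lemma~\ref{lemma:fp} and hence semiample. If $n=1$, Proposition~\ref{prop:non_vanishing1} applied over $k=\Fp$ gives that $L$ is semiample, with no further hypothesis on $(X,\Delta)$ needed. If $n=2$, the conclusion is exactly the second assertion of Lemma~\ref{lem:non_vanishing_low}, whose hypotheses all hold here: $p>2$, $L-(K_X+\Delta)$ nef and big, $n(L)=2$, $k=\Fp$, and the coefficients of $\Delta$ at most one --- automatic under the second alternative of the theorem, and forced by log canonicity under the first.

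The remaining, and essential, case is $n=3$. Here $n(L)=\dim X$ and $L-(K_X+\Delta)$ is nef and big, so Theorem~\ref{theorem:birkar} (i.e.\ \cite[Theorem~1.4]{ctx13}) shows that $L$ is big. Thus $L$ is a nef and big Cartier divisor on a projective threefold over $\Fp$ with $L-(K_X+\Delta)$ nef and big and $(X,\Delta)$ as in the hypotheses, and we conclude by the base point free theorem for big divisors over $\Fp$, namely \cite[Theorem~1.1]{MNW15}; this in turn rests on Keel's characterisation of semiampleness in positive characteristic \cite[Theorem~0.5]{keel99}, which reduces semiampleness of the big divisor $L$ to semiampleness of its restriction to the lower-dimensional locus where $L$ fails to be big, where one invokes the structure of nef divisors on surfaces over $\Fp$ (Theorem~\ref{thm:artin} and Theorem~\ref{thm:mnw_surface}). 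Both alternatives imposed on the boundary $\Delta$ are precisely what is needed to run the adjunction bookkeeping in this step.

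The hard part is this last case: proving semiampleness --- not merely $\mbQ$-effectivity --- of a nef and big divisor over $\Fp$, which genuinely requires Keel's Frobenius-theoretic methods together with the positive-characteristic birational geometry of surfaces. By contrast, the cases $n \le 2$ reduce cleanly to results established above, the case $n=2$ being a packaging of the weak canonical bundle formula of Section~\ref{s:cbf}.
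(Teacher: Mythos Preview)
Your proof is correct and follows essentially the same approach as the paper: both split according to whether $L$ is big (equivalently $n(L)=3$, via Theorem~\ref{theorem:birkar}) and then handle $n(L)=0,1,2$ by Lemma~\ref{lemma:fp}, Proposition~\ref{prop:non_vanishing1}, and Lemma~\ref{lem:non_vanishing_low} respectively, with the big case handled by \cite[Theorem~1.1]{MNW15}. The only cosmetic difference is that the paper treats the big case first and then observes $n(L)\leq 2$, whereas you enumerate all four nef dimensions upfront; the content is identical.
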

\begin{proof}
If $L$ is big, then this follows from \cite[Theorem 1.1]{MNW15}. Thus,  we can assume that $L$ is not big, and hence Theorem \ref{theorem:birkar} implies that $n(L)\leq 2$. When $n(L)=2$, we apply Lemma \ref{lem:non_vanishing_low}; when $n(L)=1$, we apply Proposition \ref{prop:non_vanishing1}; and when $n(L)=0$, we apply Lemma \ref{lemma:fp}.
\end{proof} 

\begin{theorem}[{Theorem \ref{thmi:nonvanishing_bpf}}] \label{thm:nonvanishing_bpf} Let $(X,\Delta)$ be a three-dimensional projective klt pair defined over an algebraically closed field of characteristic $p>2$ and let $L$ be a nef $\mathbb{Q}$-Cartier $\mathbb{Q}$-divisor such that $L - (K_X+\Delta)$ is nef and big. When $n(L)=0$, assume $\dim \mathrm{alb}(X) \neq 1$. Then $\kappa(L) = n(L)$.
\end{theorem}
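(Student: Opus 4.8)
The plan is to argue by cases according to the nef dimension $n(L)\in\{0,1,2,3\}$, using the standard inequality $\kappa(L)\le n(L)$, so that it suffices to prove $\kappa(L)\ge n(L)$.

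The cases $n(L)=3$ and $n(L)=2$ are immediate. When $n(L)=3$, Theorem \ref{theorem:birkar} gives that $L$ is big, hence $\kappa(L)=3$. When $n(L)=2$, the pair being klt forces $\lfloor\Delta\rfloor=0$, so the first bullet of Lemma \ref{lem:non_vanishing_low} applies and yields $\kappa(L)=2$. For $n(L)=1$, let $\phi\colon X\to Z$ be a nef reduction map of $L$; since $\dim Z=1$ it is an equidimensional morphism, and $L$ is numerically trivial on the generic fibre $X_\mu$, a normal projective surface over $K(Z)$. I would first check that $N\defeq L-(K_X+\Delta)$ restricts to a \emph{big} divisor on a general fibre $F$ of $\phi$ (equivalently, on $X_\mu$). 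Suppose not; then $N|_F$ is nef with $(N|_F)^2=0$. Writing $N\sim_{\mathbb{Q}}A+E$ with $A$ ample and $E\ge 0$ (Kodaira's lemma, as $N$ is big), from $N^2\cdot F=0$ and the nonnegativity of $N\cdot A\cdot F$ and $N\cdot E\cdot F$ we get $N\cdot A\cdot F=0$, i.e.\ $N|_F$ is orthogonal to the ample divisor $A|_F$ on the surface $F$, so $N|_F\equiv 0$ by the Hodge index theorem; then $N$ is numerically trivial on a general fibre of $\phi$, hence $N\equiv\phi^*D_Z$ for some $\mathbb{Q}$-divisor $D_Z$ on $Z$ by Lemma \ref{lemma:descending_nef_line_bundles_to_curves}, forcing $N^3=0$ and contradicting the bigness of $N$. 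Granting this, $L|_{X_\mu}$ is a nef divisor with $L|_{X_\mu}-(K_{X_\mu}+\Delta|_{X_\mu})$ nef and big, so the log base-point-free theorem for surfaces over an arbitrary field shows that $L|_{X_\mu}$ is semiample; being numerically trivial, it is $\mathbb{Q}$-torsion, so $L|_{X_\mu}\sim_{\mathbb{Q}}0$. Then Lemma \ref{lemma:descending_nef_line_bundles} gives $L\sim_{\mathbb{Q}}\phi^*L_Z$, and $n(L_Z)=\dim Z=1$ forces $\deg L_Z>0$; hence $L_Z$ is ample, $L$ is semiample, and $\kappa(L)=1$.

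The remaining case $n(L)=0$ is the heart of the matter. Here $L\equiv 0$, so $N=L-(K_X+\Delta)\equiv -(K_X+\Delta)$ is nef and big; in particular $N$ is $\mathbb{Q}$-effective. I would split according to $d\defeq\dim\mathrm{alb}(X)$, which by hypothesis is different from $1$. If $d=0$ then $\mathrm{Alb}(X)=0$, so $\mathrm{Pic}^0(X)$ is a finite group scheme, hence $\mathrm{Pic}^\tau(X)$ is finite and the numerically trivial divisor $L$ is torsion, giving $\kappa(L)=0$. If $d=3$ then $X$ is generically finite onto a three-dimensional subvariety of an abelian variety, which has non-negative Kodaira dimension, so $\kappa(X)\ge 0$; writing $L\sim_{\mathbb{Q}}K_X+(\Delta+D)$ for an effective $D\sim_{\mathbb{Q}}N$, the divisor $L$ is $\mathbb{Q}$-effective, whence $\kappa(L)\ge 0$. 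If $d=2$, let $g\colon X\to S$ be the connected-fibres part of the Stein factorisation of the Albanese morphism; then $S$ is a normal surface admitting a finite morphism onto a surface in $\mathrm{Alb}(X)$, so $S$ is non-uniruled. Because $N\equiv-(K_X+\Delta)$ is positive on the fibres of $g$, the general fibre has arithmetic genus zero and is therefore $\cong\mathbb{P}^1$ by Proposition \ref{proposition:general_fibre_smooth}(1) (using $p>2$); in particular $N$ is $g$-big, and after a birational modification of the kind carried out in Lemma \ref{lemma:nef_reduction_map_3folds} we may assume $g$ is equidimensional and descend $L\sim_{\mathbb{Q}}g^*L_S$. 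Using that $N$ is big and $g$-positive, one chooses a general effective $D\sim_{\mathbb{Q}}N$ whose horizontal part is a single prime divisor with coefficient as small as we wish, so that $\Delta'\defeq\Delta+D$ is a log pair with $g$-horizontal coefficients $<1$ and $K_X+\Delta'\sim_{\mathbb{Q}}L$. Applying Theorem \ref{theorem:canonical_bundle_formula_main} to $(X,\Delta')$ and $g$ produces a purely inseparable $f\colon T\to S$, an effective $\mathbb{Q}$-divisor $E_T$, and $t\in[0,1]$ with $f^*L_S\sim_{\mathbb{Q}}tf^*K_S+(1-t)K_T+E_T$. Since both $S$ and $T$ are non-uniruled (the latter being generically finite, up to a purely inseparable base change, over a subvariety of an abelian variety), we have $\kappa(S)\ge 0$ and $\kappa(T)\ge 0$, so the right-hand side — and hence $f^*L_S$, $L_S$, and $L$ — is $\mathbb{Q}$-effective. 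Thus $\kappa(L)\ge 0=n(L)$.

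The step I expect to be the main obstacle is the case $n(L)=0$, $d=2$: one must descend $L$ together with an auxiliary boundary $\Delta'$ absorbing $N$ along the Albanese fibration, keeping $g$ equidimensional and the hypotheses of Theorem \ref{theorem:canonical_bundle_formula_main} (notably the bound on horizontal coefficients) in force — this is the analogue, in the present setting, of the bookkeeping done in Lemma \ref{lemma:nef_reduction_map_3folds} and Proposition \ref{prop:nef_reduction_cbf}, and it is why one needs $N$ to be relatively big over $S$, which in turn uses $p>2$ through the smoothness of the generic fibre. The second, milder, technical point is the bigness of $N|_{X_\mu}$ in the case $n(L)=1$, established above via Kodaira's lemma and the Hodge index theorem; everything else reduces directly to Theorem \ref{theorem:birkar}, Lemma \ref{lem:non_vanishing_low}, the surface theory of positive characteristic, and the canonical bundle formula.
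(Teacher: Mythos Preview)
Your proposal is correct and follows the same case-by-case structure as the paper. Two minor remarks. First, for $n(L)=0$ the paper observes that $\dim\mathrm{alb}(X)\le 2$ automatically: since $L\equiv 0$, the divisor $-(K_X+\Delta)\equiv L-(K_X+\Delta)$ is nef and big, so bend-and-break covers $X$ by rational curves, which the Albanese map contracts. Thus your case $d=3$ is vacuous; your claim there that $\kappa(X)\ge 0$ follows from being generically finite over a subvariety of an abelian variety is not well-justified in positive characteristic, but fortunately it is never needed. Second, for $n(L)=1$ the paper simply invokes Proposition~\ref{prop:non_vanishing1} after the standard perturbation writing $L\sim_{\mathbb Q}K_X+\Delta'$ with $(X,\Delta')$ klt (absorbing the nef and big part into an ample plus small effective divisor); your more explicit route via Hodge index to get $N|_{X_\mu}$ big, then the surface base-point-free theorem, is a valid alternative that avoids the perturbation at the cost of a short extra computation.
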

\begin{proof}
When $n(L)=3$, this is Theorem \ref{theorem:birkar}; when $n(L)=2$, this is Lemma \ref{lem:non_vanishing_low}; and  when $n(L)=1$, this is Proposition \ref{prop:non_vanishing1}. In the case of $n(L)=0$, the proof is analogous to that of Proposition \ref{prop:non_vanishing0} after one notices that when $n(L)=0$, we have $\dim \mathrm{alb}(X) \leq 2$, since by bend-and-break $X$ is covered in this case by rational curves (see \cite[II, Theorem 5.14]{kollar96}). Taking a $\mbQ$-factorialisation is not necessary, because we assume that $\dim \mathrm{alb}(X) \neq 1$ when $n(L)=0$.
\end{proof}

\section{Positivity of adjoint divisors on surfaces} \label{s:surfces}
The following lemma was a fundamental component in the above applications of the canonical bundle formula. On the first reading, the reader might start by assuming that $M=0$; the reason we consider a non-zero $M$ is due to the fact that nef reduction maps need not be proper. 
\begin{lemma} \label{lem:bigsurface} Let $X$ be a normal $\mbQ$-factorial projective surface defined over an algebraically closed field $k$ of characteristic $p>2$, let $\Delta$ be a pseudo-effective $\mbQ$-divisor, let $L$ be a nef $\mbQ$-divisor such that $n(L)=2$, and let $M$ be an effective $\mbQ$-divisor such that $L|_{\Supp M} \equiv 0$. Suppose that 
\[
L \sim_{\mbQ} K_X + \Delta - M.
\] 
Then the following hold:
\begin{enumerate}
	\item there exists an effective $\mbQ$-divisor $D$ such that $L \equiv D$,
	\item if $\Delta$ is big, then $L$ is big,
	\item if $k= \Fp$, then $L$ is semiample.
	\item if $L|_{\Supp D}$ is semiample for an effective $\mbQ$-divisor $D$ such that $D \equiv L$, then $\kappa(L)\geq 0$. Similarly, if $L|_{\Supp D}$ is semiample for an effective $D \sim_{\mbQ} L$, then $L$ is big.
\end{enumerate} 
\end{lemma}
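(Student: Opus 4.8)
The plan is to reduce everything to the structure of the Zariski decomposition of $K_X + \Delta - M$ on a minimal resolution, and then to invoke the classification of surfaces together with the known surface results (Theorem \ref{thm:artin}, Theorem \ref{thm:mnw_surface}) recorded earlier. First I would pass to a minimal resolution $\pi \colon \overline{X} \to X$ and replace $(X, \Delta, L, M)$ by their pullbacks, noting that since $\Delta$ is pseudo-effective its pullback is, that $\pi^* L$ is nef with $n(\pi^*L) = n(L) = 2$, and that $\kappa$, bigness, and semiampleness are insensitive to this birational modification (for semiampleness one uses that $\pi^* L$ semiample $\Rightarrow$ $L$ semiample, which is a special case of Lemma \ref{lem:semiample_cover}); the effective $\mathbb{Q}$-divisor $D$ witnessing $L \equiv D$ is then obtained by pushing forward. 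So I may assume $X$ is smooth. Writing $L \sim_{\mathbb{Q}} K_X + \Delta - M$ and absorbing $-M$: since $L|_{\Supp M} \equiv 0$ and $L$ is nef with $n(L) = 2$, $M$ must be $\pi$-contracted in a suitable sense — more precisely $L \cdot M = 0$ forces every component of $M$ to lie in the null locus of $L$, which since $n(L)=2$ is a proper closed subset; this will let me treat $M$ as a harmless negative correction supported on $L$-trivial curves.

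For part (1): apply Zariski decomposition to $\Delta$, writing $\Delta = N + P$ with $N$ nef and $P$ effective (as in the proof of Theorem \ref{thm:mnw_surface}), so $K_X + \Delta - M \sim_{\mathbb{Q}} K_X + N + (P - M)$. If $\kappa(K_X) \geq 0$ we are essentially done numerically; if $X$ is uniruled we use the classification — a smooth projective uniruled surface is birationally ruled, and on such a surface the pseudo-effective cone and the behaviour of adjoint divisors of nef dimension $2$ are completely understood. The crux is to run a $K_X$-MMP (for surfaces this is classical and characteristic-free, see \cite{tanaka12}) that only contracts $L$-trivial curves — this is possible exactly because $n(L) = 2$, so after finitely many steps we reach a model on which either $K_X + \Delta - M$ is already the output we want, or $X$ carries no $K_X$-negative $L$-trivial extremal ray, and then $K_X + \lambda L$ is pseudo-effective for small $\lambda$ (this is the surface case of Lemma \ref{lemma:pseudoeffective_surfaces}); combining, $L$ itself becomes $\mathbb{Q}$-effective up to numerical equivalence, giving $D$. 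Part (2) is then immediate: if $\Delta$ is big, write $\Delta \sim_{\mathbb{Q}} A + E$ with $A$ ample and $E$ effective, and the same MMP argument now yields $L \equiv (\text{positive}) + (\text{ample contribution})$, hence $L$ is big (alternatively $L - \varepsilon(\text{ample})$ stays pseudo-effective). Part (3) follows by combining (1) with Theorem \ref{thm:mnw_surface}: over $\Fp$, $L \equiv D \geq 0$ together with nefness and the adjoint structure gives semiampleness directly, and if $L \equiv 0$ one uses Lemma \ref{lemma:fp} to conclude $L$ is torsion, hence semiample.

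For part (4): suppose $D \equiv L$ is effective with $L|_{\Supp D}$ semiample. If $L|_{\Supp D}$ is not numerically trivial on $\Supp D$, then $L \cdot D > 0$, so $L^2 = L \cdot D > 0$ and $L$ nef gives $L$ big, hence $\kappa(L) \geq 0$ (and if $D \sim_{\mathbb{Q}} L$ we directly get $\kappa(L) = 2$, i.e.\ $L$ big). So assume $L|_{\Supp D} \equiv 0$; then I would use the exact sequence $0 \to \mathcal{O}_X(mL - D) \to \mathcal{O}_X(mL) \to \mathcal{O}_D(mL) \to 0$ (or its $\mathbb{Q}$-twisted/rounded-down analogue), together with the semiampleness of $L|_{\Supp D}$ which makes $H^0(\Supp D, \mathcal{O}(mL))$ nonzero for $m$ divisible enough, and try to lift a section; the obstruction lives in $H^1(X, \mathcal{O}_X((m-1)L + (L - D)))$ — here $(m-1)L + (L-D) \equiv (m-1)L$ and one must control this cohomology. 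When $D \sim_{\mathbb{Q}} L$ exactly, $mL - D \sim_{\mathbb{Q}} (m-1)L$ is genuinely effective (or becomes so after the MMP reductions above), and then $\Supp D$ carries the full linear series, so a section on $\Supp D$ together with the section cutting out $D$ produces enough global sections to conclude $L$ is big; in the merely numerical case one only gets $\kappa(L) \geq 0$ up to numerical equivalence, which is what is claimed.

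The main obstacle I anticipate is part (4) in the $L|_{\Supp D} \equiv 0$ regime: making the lifting argument work requires either a Kawamata–Viehweg-type vanishing (unavailable in characteristic $p$) or a careful case analysis of the Zariski decomposition of $L$ on the birationally ruled model — precisely understanding when an effective divisor numerically equivalent to a nef divisor of nef dimension $2$, semiample on its support, is actually $\mathbb{Q}$-effective. I expect this to be handled by the surface classification as in \cite[Theorem 1.4]{MNW15}, reducing to finitely many structural cases (rational, ruled over a curve of positive genus, properly elliptic, general type, abelian/bielliptic, etc.), in each of which the positivity of adjoint divisors is explicit; the hypothesis $p > 2$ presumably enters only through the surface MMP and abundance inputs being quoted.
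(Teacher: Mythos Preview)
Your general architecture (minimal resolution, partial $K_X$-MMP contracting only $L$-trivial rays, then surface classification) matches the paper, but key steps do not go through as you describe. For (1), the implication ``$K_X + \lambda L$ pseudoeffective $\Rightarrow$ $L$ numerically effective'' fails in general; the paper instead shows via Riemann--Roch and Serre duality that $L^2 = 0$ forces $K_X \cdot L = 0$ and (using nefness of $K_X+3L$) $K_X^2 \geq 0$, and then runs through the classification: $\kappa(X)\geq 1$ is excluded by $n(L)=2$, each $\kappa(X)=0$ type (K3, Enriques, abelian, (quasi\nobreakdash-)hyperelliptic) is dispatched individually, and $\kappa(X)=-\infty$ reduces to $X$ rational or ruled over an elliptic curve, where numerical effectivity is checked directly. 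The hypothesis $p>2$ enters only in the Enriques case, which needs the \'etale double K3 cover --- not through the MMP, which is characteristic-free. For (3) you must also dispose of $M$: after the reductions one checks $M$ is nef with $M^2=0$, applies Theorem~\ref{thm:mnw_surface} to $L+M \sim_{\mbQ} K_X+\Delta$, and observes that the resulting semiample fibration is birational (as $n(L)=2$) and must contract $\Supp M$, whence $M=0$.

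The decisive gap is (4) when $L|_{\Supp D} \equiv 0$ and $D \sim_{\mbQ} L$. You correctly locate the obstruction in $H^1(X,\mcO_X((m{-}1)L))$, but the device that kills it is neither Kawamata--Viehweg vanishing nor bare casework: it is Keel's \emph{evaporation technique} (Lemma~\ref{lem:evaporation}). Since $L^2 = L\cdot K_X = 0$, Riemann--Roch gives $\chi(\mcO_X(kL)) = \chi(\mcO_X)$ for all $k$; the restriction sequences then yield natural isomorphisms $H^1(\mcO_X) \cong H^1(\mcO_X(L)) \cong \cdots \cong H^1(\mcO_X((m{-}1)L))$, and one passes to a finite cover $f\colon Y \to X$ with $f^*H^1(X,\mcO_X)=0$ (Serre) to lift the section on $Y$. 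This gives $\kappa(L)\geq 1$, and bigness then follows from $n(L)=2$. For the weaker hypothesis $D \equiv L$, the paper exploits the prior reduction to $X$ rational or ruled over an elliptic curve, so that $\chi(\mcO_X)\in\{0,1\}$; assuming $\kappa(L)=-\infty$ for contradiction, Riemann--Roch forces $H^1(kL-D)=0$ and the section lifts without passing to a cover. Your sketch ``a section on $\Supp D$ together with the section cutting out $D$'' would at best yield a pencil, not bigness, and does not address how the obstruction vanishes.
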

In the proof we implicitly and freely use Remark \ref{rem:descend_num_equiv} for birational morphisms.
\begin{proof}
By replacing $X$ (and $\Delta$, $M$, $D$) by the minimal resolution of singularities (by the log-pullback or pullback, accordingly), we can assume that $X$ is smooth. By adding a certain multiple of $L$ to $\Delta$, we can suppose henceforth that $L$ is Cartier. Further, by means of the MMP we can contract any $K_X$-negative extremal ray $R$ satisfying $L \cdot R = 0$. With $X$ replaced by the output of such a partial MMP, we can assume that $K_X + 3L$ is nef thanks to the cone theorem (\cite[Proposition 3.15]{tanaka12}, cf.\ the proof of Lemma \ref{lemma:pseudoeffective_surfaces}). We use that $n(L)=2$  to ensure that the output of the MMP is of dimension two. Note, that in Case (4) the assumption that $L|_{\Supp D}$ is semiample might no longer hold, but we can assume that there exists a birational morphism of smooth projective surfaces $f \colon Y \to X$ such that $f^*L|_{\Supp f^*D}$ is semiample. Last, we can assume that $\Delta'$ and $M$ have no irreducible components in common, where $\Delta = \Delta' + N$ is the Zariski decomposition with $\Delta'$ effective and $N$ nef. Note that $N$ is big and nef when $\Delta$ is big.

We start by showing that $M$ is nef. To this end, we pick a curve $C \subseteq \Supp M$. By assumptions, $L \cdot C = 0$ and so
\[
0 = (K_X + 3L + \Delta - M) \cdot C \geq -M \cdot C.
\]
Thus $M \cdot C \geq 0$ and $M$ is nef. Moreover, $M^2=0$, as otherwise $M$ is big thereby contradicting $n(L)=2$, and so $M|_{\Supp M} \equiv 0$.

First, we deal with (3), that is the case of $k=\Fp$. By Theorem \ref{thm:mnw_surface}, the $\mbQ$-divisor $L + M \sim_{\mbQ} K_X + \Delta$ is semiample. Since $n(L)=2$, the associated semiample fibration is birational, and as $(L+M)|_M \equiv 0$ it must contract $M$. This is only possible when $M=0$, hence $L$ is semiample.\\

We turn our attention to (1) and (2). First, we will show that $L$ is big if $\Delta$ is big or if $X$ is not isomorphic to a rational surface or a ruled surface over an elliptic curve. By contradiction, we can assume that $L$ is not big, i.e.\ $L^2=0$. Since $n(L)=2$, we must have $\kappa(L) \leq 0$. Furthermore, 
\[
0 = L^2 = (K_X + \Delta - M) \cdot L = (K_X+\Delta) \cdot L 
\]
and so $K_X \cdot L \leq 0$. If $\Delta$ is big, then $K_X \cdot L <0$. 

By Serre duality $H^2(X, \mcO_X(mL)) = 0$, and so Riemann-Roch yields
\[
H^0(X, \mcO_X(mL)) = H^1(X, \mcO_X(mL)) - \frac{1}{2}mL \cdot K_X + \chi(\mcO_X).
\]
Since $\kappa(L)\leq 0$, we have $L \cdot K_X = 0$. This leads to a contradiction when $\Delta$ is big, and so (2) holds. 

Let us note that since $K_X+3L$ is nef, we have $(K_X+3L)^2 \geq 0$, and so $K_X^2\geq 0$ (recall that $L^2=K_X \cdot L =0$). We take $\pi \colon X \to X^{\mathrm{min}}$ to be a morphism to a minimal model $X^{\mathrm{min}}$ of $X$ and proceed by classification of surfaces. We cannot have $\kappa(X)\geq 1$ as $K_X \cdot L = 0$ and $n(L)=2$. If $\kappa(X)=0$, then $L \cdot \Exc(\pi) = 0$ since $\Supp K_X = \Exc(\pi)$. In particular, $X$ is minimal given that $K_X+3L$ is nef. We shall now proceed by a case-by-case analysis (cf.\ \cite[Proof of Theorem 1.4 (Case 1.3)]{MNW15}).

If $X$ is a K3 surface, then $\chi(\mcO_X)\geq 2$ and $h^0(X,\mcO_X(mL)) \geq 2$ by the Riemann-Roch theorem as above. In particular $h^0(X,\mcO_X(kmL)) \geq k+1$ for $k \in \mbN$ which is impossible as $\kappa(L)\leq 0$. If $X$ is an Enriques surface, then it admits an \'etale double cover $h \colon S \to X$ by a K3 surface $S$ and by Riemann-Roch $h^0(S, \mcO_X(mh^*L)) \geq 2$. This is again impossible.

If $X$ is an abelian variety, then $L$ is numerically equivalent to some semiample divisor $D$ (see \cite[Proposition 3.10]{MNW15}). Since $n(D)=2$, we have that $D$ is big, and so $L$ is big as well. If $X$ is hyperelliptic, then it admits a finite cover $h \colon S \to X$ by an abelian variety $S$, and the same argument gives that $h^*L$ is big.

Last if $X$ is a quasi-hyperelliptic surface, then it admits a surjective morphism $h \colon S \defeq C \times \mbP^1 \to X$, where $C$ is an elliptic curve. Since $\Pic(S) \simeq \Pic(C) \times \mbZ$ and $n(h^*L)=2$, we get that $h^*L$ is ample. 

Therefore, we can assume that $\kappa(X)=-\infty$. Since $X$ is irrational, $X^{\mathrm{min}}$ is a ruled surface over a curve $C$ of genus $g(C)\geq 1$. By theory of ruled surfaces, $K_{X^{\mathrm{min}}}^2 = 8(1-g(C)) \leq 0$. Since $K_X^2 \geq 0$, we get that in fact $X = X^{\mathrm{min}}$ and $g(C)=1$. 

To finish off the proof of (1) we need to show that $L$ is numerically equivalent to an effective divisor if $X$ is rational or is a ruled surface over an elliptic curve. In the latter case, this follows from \cite[Proposition 3.13]{MNW15}. In the former, $L$ is in fact $\mbQ$-effective; indeed $\chi(\mcO_X)>0$, and so by Riemann-Roch $\kappa(L)\geq 0$.\\

We are left to show (4). By the above proof, we can assume that $X$ is rational or is a ruled surface over an elliptic curve. Let $f \colon Y \to X$ be the birational morphism such that $f^*L|_{\Supp f^*D} \sim_{\mbQ} 0$. Set $L_Y \defeq f^*L$ and $D_Y \defeq f^*D$. Replacing $L_Y$ and $D_Y$ by $mL_Y$ and $mD_Y$, respectively, for divisible enough $m\in \mbN$, we can assume that $D_Y$ is an effective divisor. Then Lemma \ref{lem:Keel_universal_mor} implies that $(L_Y)|_{D_Y} \sim_{\mbQ} 0$. Further, since $L^2 = L\cdot K_X =0$, we have $L_Y^2 = L_Y \cdot K_Y = 0$. 

First, we consider the case of $D_Y \equiv L_Y$. Assume by contradiction that $\kappa(L_Y) = -\infty$. Since $X$ is rational or is a ruled surface over an elliptic curve, we have $\chi(\mcO_Y) \in \{0,1\}$ and the Riemann Roch theorem
\[
H^0(Y, kL_Y-D_Y) = H^1(Y, kL_Y-D_Y) + \chi(\mcO_Y)
\]
implies $H^1(Y, kL_Y-D_Y)=0$ for all $k \in \mbZ_{>1}$. Thus, for the exact sequence
\[
H^0(Y, kL_Y) \xrightarrow{\mathrm{res}} H^0(D_Y, (kL_Y)|_{D_Y}) \to H^1(Y, kL_Y-D_Y),
\] 
we have that $\mathrm{res}$ is surjective. By taking $k \in \mbN$ such that $kL_Y|_{D_Y} \sim 0$, we get $H^0(Y,kL_Y) \neq 0$, concluding the proof in this case.

When $D_Y \sim_{\mbQ} L_Y$, we apply Lemma \ref{lem:evaporation} and obtain that $\kappa(L_Y) \geq 1$. Since $n(L_Y)=2$, this implies that $L_Y$ is big.
\end{proof}

\begin{remark} With notation as in the above lemma, $L$ need not be $\mbQ$-effective when $\Delta$ is not big and $k \not \simeq \Fp$. Indeed, consider $X \defeq \mathbb{P}_C(\mcO_C \oplus \mcL)$, where $C$ is an elliptic curve and $\mcL \in \Pic^0(C)$ is a non-torsion line bundle. Let $C_1, C_2 \subseteq X$ be two sections corresponding to $\mcO_C$ and $\mcL$. Then $\mcO_X(C_1-C_2)|_{C_1} \sim \mcL$, and so $C_1-C_2 \equiv 0$. Since $K_X \sim -C_1 - C_2$, we have $C_1-C_2 \sim K_X + 2C_1$.
\end{remark} 
Note that a big part of the proof is valid even if $p=2$; in fact the assumption that $p>2$ is only used when $X$ is birational to an Enriques surface.

The following lemma, which was used in the above result, is a staple among applications of the so called ``evaporation technique''. Its proof is exactly the same as that of Keel (see \cite{totaro09}), but since we work with non-integral curves we decided to append the whole argument for the convenience of the reader.
\begin{lemma}[{cf.\ \cite[Theorem 2.1]{totaro09}}] \label{lem:evaporation} Let $X$ be a smooth projective surface and let $D$ be a nef effective divisor such that $D\cdot K_X=0$ and $D|_{\Supp D}$ is semiample. Then $\kappa(D)\geq 1$. 
\end{lemma}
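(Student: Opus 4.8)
The plan is to run the ``evaporation'' technique of Keel, following essentially verbatim the proof of \cite[Theorem~2.1]{totaro09}; the only new point is to carry it out when $\Supp D$ is not integral. First come the reductions. Since $D$ is nef on a surface, $D^2\geq 0$, and if $D^2>0$ then $D$ is nef and big, so $\kappa(D)=2$ and we are done; hence I may assume $D^2=0$. Writing $D=\sum_i a_iD_i$ with $a_i>0$, the equality $0=D^2=\sum_i a_i(D\cdot D_i)$ with each $D\cdot D_i\geq 0$ forces $D\cdot D_i=0$ for all $i$, so $\mcO_X(D)$ is numerically trivial on $C\defeq\Supp D$. As $\mcO_X(D)|_C$ is also semiample, a globally generated multiple of it defines a morphism that collapses every component of $C$, hence has $0$-dimensional scheme-theoretic image, and a line bundle on a $0$-dimensional scheme over an algebraically closed field is trivial; thus, after replacing $D$ by a positive multiple (which changes neither $\kappa(D)$ nor the hypotheses), we have $\mcO_X(D)|_C\cong\mcO_C$. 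Since $C\hookrightarrow D$, the inclusion of the reduced subscheme into the divisor, is a finite universal homeomorphism, Lemma~\ref{lem:Keel_universal_mor} shows $\mcO_X(D)|_D$ is semiample, and being numerically trivial it is trivial: writing $L\defeq\mcO_X(D)$ we have $L|_D\cong\mcO_D$, where $D$ now denotes the (possibly non-reduced) effective Cartier divisor.

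The goal is then to produce, for some $n\geq 1$, a global section of $L^{\otimes n}$ whose restriction to $D$ is nowhere vanishing: together with the $n$-th power $s_D^{\,n}$ of the canonical section $s_D\in H^0(X,L)$ cutting out $D$, such a section gives $h^0(X,L^{\otimes n})\geq 2$ and hence $\kappa(D)\geq 1$. Equivalently, I want the image $\partial_n(1)\in H^1(X,L^{\otimes(n-1)})$ of the unit $1\in H^0(D,L^{\otimes n}|_D)=H^0(D,\mcO_D)$ under the connecting map of $0\to L^{\otimes(n-1)}\to L^{\otimes n}\to L^{\otimes n}|_D\to 0$ to vanish for some $n$. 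At this point the hypothesis $D\cdot K_X=0$ enters through Riemann--Roch on the surface: $\chi(X,L^{\otimes n})=\chi(\mcO_X)+\tfrac12(n^2D^2-nD\cdot K_X)=\chi(\mcO_X)$ for every $n$, and for $n\gg 0$ the divisor $K_X-nD$ has negative intersection with an ample class, so $h^2(X,L^{\otimes n})=h^0(X,\mcO_X(K_X)\otimes L^{\otimes -n})=0$; hence $h^0(X,L^{\otimes n})=\chi(\mcO_X)+h^1(X,L^{\otimes n})\geq\chi(\mcO_X)$ for $n\gg 0$, which already disposes of every $X$ with $\chi(\mcO_X)\geq 2$.

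For the remaining cases I would invoke Keel's evaporation argument to show $\partial_{p^{e}}(1)=0$ for $e$ large. Concretely, one works with the infinitesimal thickenings $D^{(j)}\subseteq X$ cut out by $\mcO_X(-jD)$; the conormal sequences $0\to L^{\otimes(n-j)}|_D\to L^{\otimes n}|_{D^{(j+1)}}\to L^{\otimes n}|_{D^{(j)}}\to 0$ together with $L|_D\cong\mcO_D$ express the relevant obstructions as classes in $H^1$ of coherent sheaves on the fixed curve $C$, and the crucial point is that the Frobenius of $X$ carries $D^{(pj)}$ onto $D^{(j)}$ while multiplying $L$ by $p$, which annihilates these obstruction classes after passing to a Frobenius power $L^{\otimes p^{e}}$. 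This yields a section of $L^{\otimes p^{e}}$ not vanishing on $D$, and therefore $\kappa(D)\geq 1$.

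The hard part is exactly this last step — the verification, due to Keel, that the $H^1$-obstructions along the thickenings of $C$ are killed by a Frobenius power of $L$ — which is a genuinely positive-characteristic phenomenon (everything preceding it is valid over any field, and indeed the statement fails in characteristic $0$ in general). The non-integrality of $\Supp D$ introduces no real difficulty, since one only ever manipulates coherent sheaves on the single projective curve $C$ and on its nilpotent thickenings.
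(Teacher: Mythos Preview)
Your reductions are fine and parallel the paper's: reduce to $D^2=0$, use Lemma~\ref{lem:Keel_universal_mor} to get $D|_D$ semiample, hence $\mbQ$-trivial. (Minor point: semiample plus numerically trivial gives only \emph{torsion}, not trivial, so you need one more replacement of $D$ by a multiple before asserting $L|_D\cong\mcO_D$.) The Riemann--Roch observation disposing of $\chi(\mcO_X)\geq 2$ is correct but not needed; the paper's argument is uniform in $\chi$.

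The genuine gap is your last paragraph. What you sketch --- Frobenius carrying $D^{(pj)}$ to $D^{(j)}$ and thereby ``annihilating the obstruction classes after passing to $L^{\otimes p^e}$'' --- is not the argument in Keel or Totaro for this statement, and as written it does not work. You are conflating two different mechanisms. Keel's nilpotent-evaporation (as in Lemma~\ref{lem:Keel_universal_mor}) kills obstructions that live in cohomology of \emph{nilpotent ideal sheaves} on a fixed scheme. Here, after your reduction, the obstruction $\partial_1(1)$ to lifting the unit section lives in $H^1(X,\mcO_X)$ on the smooth surface $X$, and replacing $L$ by $L^{p^e}$ does not kill it: on an ordinary abelian or bielliptic surface the absolute Frobenius acts bijectively on $H^1(X,\mcO_X)$, so no iterate of Frobenius annihilates anything there. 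Your thickening sequences only let you extend the section from $D$ to $D^{(n)}$; the final step from $D^{(n)}$ to $X$ still has obstruction in $H^1(X,\mcO_X)$.

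The paper's proof (which is Totaro's) handles this differently. With $m$ minimal such that $mD|_D\sim 0$, one first shows $\chi(\mcO_D(kD))=0$, hence $H^0(D,\mcO_D(kD))=H^1(D,\mcO_D(kD))=0$ for $0<k<m$, so multiplication by $s_D$ gives a chain of isomorphisms
\[
H^1(X,\mcO_X)\simeq H^1(X,\mcO_X(D))\simeq\cdots\simeq H^1(X,\mcO_X((m-1)D)).
\]
Then one invokes Serre's theorem (\cite[Proposition 12 and Section 9]{Serre58}): there is a \emph{finite} morphism $f\colon Y\to X$ --- in general a mix of \'etale and purely inseparable covers, not just Frobenius --- with $f^*H^1(X,\mcO_X)=0$. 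Pulling back the isomorphism chain kills the obstruction $\delta(s)\in H^1(X,\mcO_X((m-1)D))$ on $Y$, so a nowhere-vanishing section of $mD_Y|_{D_Y}$ lifts to $H^0(Y,mD_Y)$, giving $h^0(Y,mD_Y)\geq 2$, hence $\kappa(D_Y)\geq 1$, hence $\kappa(D)\geq 1$. The passage to a cover is essential; there is no way to stay on $X$ and merely enlarge the exponent of $L$.
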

\begin{proof}
By Lemma \ref{lem:Keel_universal_mor}, $D|_D$ is semiample. If $D^2>0$, then $D$ is big, so we can assume that $D^2=0$ and $D|_D \sim_{\mbQ} 0$.

By the Riemann-Roch theorem
\[
\chi(\mcO_X(kD)) = \frac{1}{2}kD \cdot (kD - K_X) + \chi(\mcO_X) = \chi(\mcO_X)
\]
for all $k \in \mbZ$. Take $m \in \mbN$ to be the smallest number such that $mD|_D \sim 0$ and consider the following exact sequence for any $0 \leq k \leq m$:

\vspace{2pt}
\begin{tikzcd}
  H^0(X,\mcO_X((k-1)D)) \rar & H^0(X, \mcO_X(kD)) \rar
             \ar[draw=none]{d}[name=X, anchor=center]{}
    & H^0(D, \mcO_D(kD)) \ar[rounded corners,
            to path={ -- ([xshift=2ex]\tikztostart.east)
                      |- (X.center) \tikztonodes
                      -| ([xshift=-2ex]\tikztotarget.west)
                      -- (\tikztotarget)}]{dll}[at end]{} \\      
  H^1(X, \mcO_X((k-1)D)) \rar & H^1(X, \mcO_X(kD)) \rar & H^1(D, \mcO_D(kD)).
\end{tikzcd}\vspace{2pt} \\

Since $\chi(\mcO_X((k-1)D)) = \chi(\mcO_X(kD))$, we get that $\chi(\mcO_D(kD))=0$. Thus, if $0<k<m$, then 
\[
H^1(D, \mcO_D(kD))=H^0(D, \mcO_D(kD))=0.
\]
In particular, the above exact sequence provides a natural isomorphism
\[
H^1(X, \mcO_X((k-1)D)) \simeq H^1(X, \mcO_X(kD)).
\]
Therefore,
\[
H^1(X,\mcO_X) \simeq H^1(X,\mcO_X(D)) \simeq \ldots \simeq H^1(X,\mcO_X((m-1)D)).
\]

By \cite[Proposition 12 and Section 9]{Serre58}, there exists a finite morphism $f \colon Y \to X$ such that $f^*H^1(X,\mcO_X)=0$, and so $f^*H^1(X,\mcO_X((m-1)D)) = 0$ as well. Now, take a nonzero section $s \in H^0(D, \mcO_D(mD))$ and consider the following commutative diagram with $D_Y \defeq f^*D$:
{ \footnotesize
\begin{center}
\begin{tikzcd}
H^0(Y, \mcO_Y(mD_Y)) \arrow{r}{\mathrm{res}_Y} & H^0(D_Y, \mcO_{D_Y}(mD_Y)) \arrow{r}{\delta_Y} & H^1(Y, \mcO_Y((m-1)D_Y))  \\
H^0(X, \mcO_X(mD)) \arrow{u}{f^*} \arrow{r}{\mathrm{res}} & H^0(D, \mcO_D(mD)) \arrow{u}{f^*} \arrow{r}{\delta} & H^1(X, \mcO_X((m-1)D)). \arrow{u}{f^*} 
\end{tikzcd}
\end{center}
}
Since $f^*H^1(X,\mcO_X((m-1)D))=0$, we have that $\delta_Y(f^*s)=0$, and so $f^*s$ lies in the image of $\mathrm{res}_Y$. Thus $h^0(Y, \mcO_Y(mD_Y)) \geq 2$ and $\kappa(D_Y)\geq 1$. By a standard argument (cf. \cite[Lemma 2.10]{keel99}),  we have $\kappa(D)\geq 1$. 
\end{proof}

\section*{Acknowledgements}
I would like to express my special gratitude to Paolo Cascini for his substantial help, encouragement, and support. 

Further, I would like to thank Hiromu Tanaka for numerous discussions, encouragement, and advice on proving log non-vanishing. I also thank Yoshinori Gongyo, Diletta Martinelli, Yusuke Nakamura, Johannes Nicaise, Zsolt Patakfalvi, Joe Waldron, Chenyang Xu, and Lei Zhang for comments and helpful suggestions. The paper has been motivated by the work \cite{MNW15} started at the Pragmatic research school in Catania 2013.

The author was supported by the Engineering and Physical Sciences Research Council [EP/L015234/1].

\bibliographystyle{amsalpha}
\bibliography{LibraryExtracted2}

\end{document}